\documentclass[11pt,reqno]{amsart}
\usepackage{amssymb}
\usepackage{mathrsfs}
\PassOptionsToPackage{dvipsnames}{xcolor}
\usepackage{pgfplots}
\pgfplotsset{compat=1.11}
\usepackage{amsbsy}
\usepackage{amsfonts}
\usepackage{amsmath}
\usepackage{amsthm}
\usepackage{amssymb}
\usepackage{ytableau}
\usepackage[all,arc]{xy}
\usepackage{tikz-cd}
\usepackage{enumerate}
\usepackage{float}

\setlength{\oddsidemargin}{0in}   
\setlength{\evensidemargin}{0in}
\setlength{\topmargin}{0.0in}
\setlength{\textwidth}{6.0in}
\setlength{\textheight}{8.9in}


\newcommand{\SSS}{S}

\newcommand{\NN}{\mathbb{N}}
\newcommand{\ordr}{\operatorname{ord}}

\newcommand{\ddR}{\QQ}
\newcommand{\ddS}{U}
\newcommand{\dda}{d}
\newcommand{\dnu}{\operatorname{ord}_t}
\newcommand{\ddnu}{\alpha}

\newcommand{\DR}{R} 
\newcommand{\DD}{D} 
\newcommand{\FF}{{\mathbb{F}}}
\newcommand{\KK}{\mathbb{K}}
\newcommand{\QQ}{\mathbb{Q}}

\newcommand{\ZZ}{\mathbb{Z}}

\newcommand{\ivr}{\alpha} 

\newcommand{\Gbb}{\overline{\overline{G}}}
\newcommand{\Gb}{{\overline{G}}}
\newcommand{\nub}{\overline{\nu}}

\newcommand{\ba}{{\bf a}}
\newcommand{\bb}{\mathfrak{b}}

\newcommand{\pp}{\mathfrak{p}} 

\newcommand{\tp}{{t}}

\newcommand{\sD}{\mathcal{D}}

\newcommand{\sT}{\mathcal{T}}

\newcommand{\sI}{\mathscr{I}}

\newcommand{\T}{\mathcal{T}} 

\newcommand{\PP}{\mathbb{P}}
\newcommand{\order}{\operatorname{ord}}

\newcommand{\bt}{\beta}
\newcommand{\coloneqq}{:=}
\newcommand{\Prim}{{\rm Prim}}

\theoremstyle{plain}
\newtheorem{theorem}{Theorem}[section]
\newtheorem{prop}[theorem]{Proposition}

\newtheorem{thm}[theorem]{Theorem} 
\newtheorem{lem}[theorem]{Lemma}

\theoremstyle{definition}
\newtheorem{definition}[theorem]{Definition}

\newtheorem{example}[theorem]{Example}

\theoremstyle{remark}
\newtheorem{remark}[theorem]{Remark}
\newtheorem{rem}[theorem]{Remark}

\numberwithin{equation}{section}

\definecolor {UMblue}  {RGB}{0, 39, 76}
\definecolor {UMmaize} {RGB}{255, 203, 5}

\definecolor {color_b}{RGB}{255,0,0}
\definecolor {color_c}{RGB}{20, 200, 30}
\definecolor {color_a}{RGB}{0,0,255}

\definecolor{lgreen} {RGB}{180,210,100}
\definecolor{dblue}  {RGB}{20,66,129}
\definecolor{ddblue} {RGB}{11,36,69}
\definecolor{lred}   {RGB}{220,0,0}
\definecolor{nred}   {RGB}{224,0,0}
\definecolor{norange}{RGB}{230,120,20}
\definecolor{nyellow}{RGB}{255,221,0}
\definecolor{ngreen} {RGB}{98,158,31}
\definecolor{dgreen} {RGB}{78,138,21}
\definecolor{nblue}  {RGB}{28,130,185}
\definecolor{jblue}  {RGB}{20,50,100}

\title[The factorial function and generalizations, extended]{The factorial function and generalizations, extended} 
\author{Jeffrey C. Lagarias}
\address{Department of Mathematics, University of Michigan, 530 Church Street, Ann Arbor, MI 48109--1043, USA}
\email{lagarias@umich.edu} 
\author{Wijit Yangjit} 
\email{yangjit@umich.edu}
\date{March 4, 2024}

\begin{document}

\begin{abstract}
This paper presents an extension of Bhargava's theory of factorials associated to any nonempty subset $\SSS$ of $\ZZ$.
Bhargava's factorials $k!_\SSS$ are invariants, constructed using the notion of $p$-orderings of $\SSS$ where $p$ is a prime. This paper
defines $b$-orderings of any nonempty subset $\SSS$ of $\ZZ$
for all integers $b\ge2$, as well as ``extreme" cases $b=1$ and $b=0$.
It defines generalized factorials 
 $k !_{\SSS,\T}$ and generalized binomial coefficients $\binom{k+\ell}{k}_{\SSS, \sT}$ as nonnegative integers, 
  for all nonempty $\SSS$ and allowing only $b$ in  $\T \subseteq \NN$.
It computes $b$-ordering  invariants  when $\SSS$ is $\mathbb{Z}$ and when $\SSS$ is the set of all primes.
\end{abstract}

\maketitle
\setlength{\baselineskip}{1.0\baselineskip}

%
%

\section{Introduction}

This paper  presents an extension  of Bhargava's theory of  generalized factorials associated 
to any nonempty subset $\SSS$ of the integers $\ZZ$ as presented in \cite{Bhar:00}.
The  factorials of Bhargava are based on  invariants called $p$-sequences of $\SSS$,
indexed by the nonnegative integers $k$, which are defined for all primes $p$ of $\ZZ$,
and are denoted $\nu_k(\SSS,p)$.
They are defined using  a notion of $p$-ordering of the elements in $\SSS$, and each invariant $\nu_k(\SSS,p)$ is a nonnegative power of $p$. 
 Bhargava's  generalized
factorials $k!_{\SSS}$ are produced
in factored form as the product over all primes $p$ of the $k$-th invariants $\nu_k(\SSS,p)$;
these invariants are $1$ for all $p >k$, so the factorization is a finite product.

This paper defines  $b$-orderings and $b$-sequences
of $\SSS$  for all integers $b \ge 2$, which coincide with Bhargava's  definitions when $b$ is  a prime.
We also define these notions for  the extreme cases $b=1$ and $b=0$. The main result is the
well-definedness of $b$-sequences independent of the choice of $b$-ordering.

 The existence  of $b$-sequences  of $\SSS$  for all bases $b$ inserts an extra degree of freedom in  Bhargava's theory. 
 Using them we define for  a given nonempty subset $\SSS$ of $\ZZ$ and a given subset $\sT$ of allowed $b$-values,
 taken from $\NN=\{0, 1, 2, 3, 4,\dots\}$,
  generalized factorials $k!_{S, \sT}$ and generalized binomial
 coefficients $\binom{k+\ell}{k}_{\SSS, \sT}$, which are always  nonnegative integers .
 The case where $\sT= \PP$, the set of all primes, recovers Bhargava's generalized factorials.

We review   Bhargava's theory for $\ZZ$ in Section \ref{sec:1a}.
The definition of $b$-orderings  for general $b$  is 
introduced  in  Section \ref{sec:2} as Definition \ref{def:b-orderings}. The main results are 
 formulated  in Section \ref{sec:2}, in parallel to Bhargava's results; 
 they are  proved in Sections \ref{sec:Bhargava-Z[[t]]}  through \ref{sec:7a}.  The well-definedness result is proved via
 suitable Bhargava $\pp$-ordering invariants for the prime ideal $\pp=t \QQ[[t]]$ in the formal power series ring $\QQ[[t]]$;
 we give a new max-min characterization of such invariants in Section \ref{sec:Bhargava-Z[[t]]}.  
 We carry $b$-ordering invariants into $\pp$-ordering invariants using congruence-preserving mappings,
 presented in Section \ref{sec:5}. 
 We give formulas for  the  $b$-ordering invariants for 
 $\SSS= \ZZ$ and  $\SSS=\PP$, in Sections 7 and \ref{sec:7a}. 

%
%

\section{Bhargava's $p$-orderings and generalized factorials}\label{sec:1a}

In  1996 Bhargava \cite{Bhar:96} presented  a theory of generalized factorials for a 
large class of commutative rings $\DR$
 called Dedekind-type rings,  which include all Dedekind domains.  
   Bhargava's generalized factorials are associated to a nonempty set $S$ of elements of $\DR$ and to
    the set $\PP(R)$
      of all  nonzero prime ideals  of $\DR$.
For Dedekind rings $\DR$, Bhargava's generalized factorials $k!_S$ are defined to be ideals in $\DR$.
For some history of his work see   Cahen, Chabert and Kedlaya \cite{CCK:17}.

Bhargava originally developed his construction of generalized factorials for  $\mathbb{Z}$.
In  2000 he described the $\ZZ$-theory in detail (\cite{Bhar:00}). In the $\ZZ$-theory 
generalized factorials and binomial coefficients  are treated as
elements of the nonnegative integers $\NN$.

Bhargava's definitions of factorials and binomial coefficients associated
to a nonempty subset $\SSS$ of $\ZZ$ are given via their  prime factorizations.
For factorials a factorization formula, sometimes called de Polignac's formula, goes back to 
Legendre \cite[p.8]{Legendre:1808}:
\begin{equation}\label{eqn:de-Polignac}
k!= \prod_{p \in \PP} p^{\left \lfloor \frac{k}{p}\right \rfloor +\left \lfloor \frac{k}{p^2} \right\rfloor + \left\lfloor \frac{k}{p^3} \right\rfloor + \cdots},
\end{equation}
where $\PP= \{ 2,3, 5, \dots\}$ denotes the set of all prime numbers. 
This factorization  for a single $p$ in the factorial are given by the $k$-th term  in a $p$-sequence
 for $\SSS= \ZZ$, based on his  notion of $p$-orderings of $\SSS$, cf. \eqref{eqn:classic-factorial}.
 The study of the prime factorizations of binomial coefficients traces back to work of Kummer \cite{Kum:1852} in 1852;
see also Granville \cite{Gra:97}.

%
%

\subsection{Bhargava's $p$-orderings of nonempty subsets $\SSS$ of $\ZZ$}\label{subsec:13a}

\begin{definition} \label{def:p-ordering}
Let $p$ be a prime. Let $S$ be a nonempty subset of $\mathbb{Z}$.
 A \emph{$p$-ordering of $S$} is any sequence $\mathbf{a}=\left(a_i\right)_{i=0}^\infty$ of elements of $S$ that can be formed recursively as follows:
\begin{enumerate}
\item[(i)] $a_0\in S$ is chosen arbitrarily;
\item[(ii)] Given $a_j\in S$, $j=0,\dots,i-1$, the next element $a_i\in S$ is chosen so that $a=a_i$
minimizes 
the highest power of $p$ dividing the product $\prod_{j=0}^{i-1}\left(a-a_j\right)$.
\end{enumerate}
\end{definition}

Bhargava defines  $\nu_i(S,p,\mathbf{a})$ 
to be the largest power of $p$ that divides $\prod_{j=0}^{i-1}\left(a_i-a_j\right)$. 
Bhargava calls the sequence 
 $\left(\nu_i(\SSS,p,\mathbf{a})\right)_{i=0}^\infty$ the \emph{$p$-sequence of $S$ associated to the $p$-ordering $\mathbf{a}$}.
Bhargava \cite[Theorem~5]{Bhar:00} showed:

%
%

\begin{thm}[Bhargava \cite{Bhar:00}]\label{thm:bhar1}
For a nonempty subset $\SSS$ of  $\ZZ$ and a prime  $p$, the associated $p$-sequence
of a $p$-ordering of $\SSS$ is independent of the choice of $p$-ordering. 
\end{thm}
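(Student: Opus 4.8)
The plan is to recast the $p$-sequence through its partial products and to identify those with a manifestly ordering-independent minimum. Writing $v_p$ for the $p$-adic valuation, set $e_i(\mathbf{a}) = v_p(\nu_i(\SSS,p,\mathbf{a})) = \sum_{j<i} v_p(a_i - a_j)$, and let $A_n = \{a_0,\dots,a_{n-1}\}$ be the set of the first $n$ terms. Summing over $i$ and reindexing by pairs gives
\[
E_n(\mathbf{a}) := \sum_{i=0}^{n-1} e_i(\mathbf{a}) = v_p\!\Big( \prod_{0 \le j < i \le n-1} (a_i - a_j) \Big) =: V(A_n),
\]
which is a symmetric function of the set $A_n$ alone. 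Hence it suffices to prove that a greedy $p$-ordering minimizes $V$ over all $n$-element subsets of $\SSS$: if $V(A_n) = \alpha_n(\SSS) := \min\{V(A) : A \subseteq \SSS,\ |A| = n\}$ for every $n$, then $e_i = \alpha_{i+1}(\SSS) - \alpha_i(\SSS)$, so $\nu_i = p^{\alpha_{i+1}-\alpha_i}$ and the whole $p$-sequence is determined by $\SSS$ and $p$, independent of $\mathbf{a}$. The minimum is attained because distinct integers force $v_p(a-a') < \infty$, so $V(A) \to \infty$ as the points of $A$ become $p$-adically close.

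Second, I would expose a self-similar recursion. Partition $\SSS$ by residue mod $p$ into classes $\SSS_r$, and set $\SSS_r^\ast = \{(s - r)/p : s \in \SSS_r\}$. For $a,a'$ in the same class, $v_p(a - a') = 1 + v_p(a^\ast - (a')^\ast)$, while across classes $v_p(a-a') = 0$. Splitting $A = \bigsqcup_r A_r$ with $|A_r| = n_r$ therefore yields $V(A) = \sum_r \binom{n_r}{2} + \sum_r V(A_r^\ast)$, and minimizing over subsets, independently in each class, gives the recursion
\[
\alpha_n(\SSS) = \min_{\substack{n_r \ge 0\\ \sum_r n_r = n}} \sum_r \Big( \binom{n_r}{2} + \alpha_{n_r}(\SSS_r^\ast) \Big).
\]
This already displays $\alpha_n(\SSS)$ as an ordering-free invariant; the real content is to show that greedy selection attains it.

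Third, I would prove greedy optimality by induction (well-founded since $n$ fixed distinct integers cannot remain pairwise congruent modulo arbitrarily high powers of $p$), strengthening the hypothesis to include convexity of $k \mapsto \alpha_k(\SSS_r^\ast)$. Two observations drive this. (i) Whenever the greedy rule selects a class-$r$ element it must, among the remaining class-$r$ candidates, minimize the shrunk exponent, since the count of previously chosen class-$r$ elements is common to all of them; so the class-$r$ subsequence, shrunk, is itself a greedy $p$-ordering of $\SSS_r^\ast$ and by induction realizes the values $\alpha_k(\SSS_r^\ast)$. (ii) Consequently the marginal cost of adding the $(k+1)$-st class-$r$ element is $c_r(k) = k + \big(\alpha_{k+1}(\SSS_r^\ast) - \alpha_k(\SSS_r^\ast)\big)$, and the global greedy rule is exactly ``add where $c_r(k)$ is smallest.'' By the convexity hypothesis each $c_r$ is nondecreasing in $k$, so least-marginal allocation is optimal for the inner minimization, giving $E_n(\text{greedy}) = \alpha_n(\SSS)$. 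Finally convexity is preserved: each $k \mapsto \binom{k}{2} + \alpha_k(\SSS_r^\ast)$ is convex, and $\alpha_n(\SSS)$ is their inf-convolution, hence convex, closing the induction.

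I expect the main obstacle to be the convexity of $n \mapsto \alpha_n(\SSS)$, which is the one non-formal ingredient: it is simultaneously what makes the one-step greedy rule globally optimal for the hierarchical allocation and what must be propagated through the recursion. The remaining points---the symmetry of $V$, the class decomposition, and attainment of the minimum---become routine once convexity is in hand. This combinatorial route is self-contained and independent of the formal-power-series machinery; it also makes transparent the self-similar recursion that the $\pp$-ordering computations of the later sections will exploit.
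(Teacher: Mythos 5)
Your strategy is correct in outline and genuinely different from the paper's. The paper proves well-definedness (for all $b$, hence for $b=p$) by transporting $\SSS$ into $\QQ[[t]]$ via a congruence-preserving map and invoking the max-min characterization of $t$-exponent sequences over $t$-primitive polynomials (Theorems \ref{thm:max-min} and \ref{thm:main-propertyCZ}); you instead stay inside $\ZZ$ and argue combinatorially. In effect you propose to prove the minimization/sharpness statement first --- that initial segments of $p$-orderings minimize $V(A)=\sum_{i<j}v_p(a_i-a_j)$ over all $n$-element subsets, i.e.\ the prime case of Theorems \ref{thm:bhar10} and \ref{thm:majorization-b} --- and to deduce Theorem \ref{thm:bhar1} from it, reversing the logical order of the paper (and of Bhargava). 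Your ingredients are individually sound: the symmetrization of partial sums, the residue-class recursion with the $\binom{n_r}{2}$ bookkeeping, optimality of least-marginal allocation for separable convex objectives, and preservation of convexity under inf-convolution; indeed these closely parallel the technique the paper uses in Section \ref{sec:7a} (Lemma \ref{lem:well-distributed}, Proposition \ref{prop:84}) to compute the invariants of $\SSS=\PP$. Note also that additivity of $v_p$ is used only cosmetically (to rewrite a sum of valuations as the valuation of a product), so your route, once completed, would actually extend verbatim to composite $b$ with $\ordr_b$ in place of $v_p$.

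The genuine gap is the well-foundedness of your induction. Your inductive statement for $\SSS$ (``greedy realizes $\alpha_n(\SSS)$ for every $n$, and $n\mapsto\alpha_n(\SSS)$ is convex'') is deduced from the identical statement for the shrunk classes $\SSS_r^\ast$; but the passage $\SSS\mapsto\SSS_r^\ast$ decreases nothing when $\SSS$ is infinite. In the most important instance $\SSS=\ZZ$ one has $\SSS_r^\ast=\ZZ$ for every $r$, so the induction is literally circular. Your parenthetical justification (finitely many fixed distinct integers must separate modulo $p^d$) concerns a fixed finite configuration; it does not apply to the induction hypothesis, which quantifies over \emph{all} subsets of the infinite set $\SSS_r^\ast$ and is moreover needed at the \emph{same} size $n$: both for comparing greedy against all-in-one-class allocations (which require $\alpha_n(\SSS_r^\ast)$ and monotonicity of the marginal $c_r$ up to $k=n-1$) and for propagating convexity through the inf-convolution (convexity of the convolution up to index $n$ needs convexity of each summand up to index $n$). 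A concrete repair: first prove the full statement for \emph{finite} $\SSS$, where lexicographic induction on $\bigl(|\SSS|,\max_{x\neq y\in\SSS}v_p(x-y)\bigr)$ is well-founded (multi-class splitting strictly decreases cardinality; single-class shrinking decreases the maximal valuation); then transfer to infinite $\SSS$ by observing that if $\mathbf{a}$ is a $p$-ordering of $\SSS$ and $A\subseteq\SSS$ has $|A|=n$, then $(a_0,\dots,a_{n-1})$ is an initial segment of a $p$-ordering of the finite set $T=A\cup\{a_0,\dots,a_{n-1}\}$ (minimizing over $\SSS$ implies minimizing over $T$), whence $V(A)\ge\alpha_n(T)=E_n(\mathbf{a})$. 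Alternatively, truncate the valuation at depth $D$ and induct on $D$. With either device inserted, the rest of your argument goes through as described.
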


By Theorem \ref{thm:bhar1} one may set $\nu_i(\SSS,p) :=\nu_i(\SSS,p,\mathbf{a})$ for any
choice of  $p$-ordering of $\SSS$,
and call $(\nu_i (\SSS, p))_{i=0}^{\infty}$ the  {\em $p$-sequence} of $\SSS$.
By convention $\nu_0(\SSS, p) = 1$.

We may write
\begin{equation}\label{eqn:22}
\nu_i(\SSS, p,\mathbf{a})=p^{\ivr_i^{\ast}(S,p,\mathbf{a})},
\end{equation}
where
\begin{equation}\label{eqn:23}
\ivr_i^{\ast}(\SSS,p,\mathbf{a}):=\order_p\bigg(\prod_{j=0}^{i-1}\left(a_i-a_j\right)\bigg).
\end{equation}
Here  $\order_p(\cdot): \ZZ \to \NN \cup \{ +\infty\}$ denotes the additive $p$-adic valuation given by
\begin{equation}
\order_p(k):=\sup\left\{\alpha\in\mathbb{N}:p^\alpha\text{ divides }k\right\},
\end{equation}
with $\order_p(0)= +\infty$. By the convention above, 
$\ivr_0^{\ast}(\SSS, p, \mathbf{a})=0$.

Theorem \ref{thm:bhar1} implies  that one may write  
 $ \ivr_i^{\ast}(\SSS, p):= \ivr_i^{\ast}(\SSS, p, \mathbf{a})$  as invariants  of $\SSS$,  independent of
the choice of $p$-ordering $\mathbf{a}$.
We call the data $\left( \ivr_i^{\ast}(\SSS,p) \right)_{i =0}^{\infty}$ the {\em  $p$-exponent sequence of $\SSS$};
 it  is equivalent data to 
 $(\nu_i(\SSS, p))_{i=0}^{\infty}$, provided $p$ is known.

%
%

\subsection{Bhargava's generalized factorials and generalized binomial coefficients associated to nonempty subsets $\SSS$ of $\ZZ$}\label{subsec:14a}

Bhargava used $p$-sequences  to define his generalized factorials. 
The \emph{generalized factorial function associated to a nonempty subset $S$ of $\ZZ$}, denoted $k!_S$, is defined by
\begin{equation}
k!_S:=\prod_{p\in \PP} \nu_k(S,p) = \prod_{p \in \PP} p^{\ivr_k^\ast(S,p)}.
\end{equation}
This definition constructs factorials  expressed
multiplicatively  in terms of  their prime factorizations.

In the special case $S=\mathbb{Z}$, Bhargava showed that the generalized factorials agree with the usual factorials. To do this, 
Bhargava \cite[Proposition~6]{Bhar:00} showed

%
%
\begin{thm}[Bhargava \cite{Bhar:00}]\label{thm:Bhargava-simultaneous-p-ordering}
The natural ordering $0,1,2,\dots$ of the nonnegative integers
 forms a $p$-ordering of $S= \mathbb{Z}$ simultaneously for all primes $p$.
\end{thm}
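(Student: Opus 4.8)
The plan is to verify directly that the natural ordering $a_i = i$ satisfies the two recursive clauses of Definition~\ref{def:p-ordering} for $\SSS = \ZZ$, and to do so in a way that is manifestly uniform in $p$. Clause (i) is immediate, since $a_0 = 0$ is an admissible arbitrary choice. For clause (ii), I must show that once $a_0,\dots,a_{i-1}$ have been taken to be $0,1,\dots,i-1$, the value $a = i$ minimizes $\order_p\big(\prod_{j=0}^{i-1}(a-j)\big)$ over all $a \in \ZZ$. The point is that a single integrality statement will force this minimum simultaneously for every prime $p$.

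First I would rewrite $\prod_{j=0}^{i-1}(a-j) = a(a-1)\cdots(a-i+1)$, the product of the $i$ consecutive integers $a-i+1,\dots,a$. The key arithmetic input is the classical fact that the product of any $i$ consecutive integers is divisible by $i!$; equivalently, $\binom{a}{i} = \frac{1}{i!}\prod_{j=0}^{i-1}(a-j)$ is an integer for every $a \in \ZZ$. For $a \ge i$ this is the integrality of ordinary binomial coefficients; for $0 \le a < i$ one of the factors vanishes, so the product is $0$ (and $\order_p(0) = +\infty$); and for $a < 0$ it follows from the sign reduction $\prod_{j=0}^{i-1}(a-j) = (-1)^i\prod_{j=0}^{i-1}(j-a)$, whose right-hand side is $\pm$ a product of $i$ consecutive positive integers.

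Granting this divisibility, for every prime $p$ and every $a \in \ZZ$ we obtain $\order_p\big(\prod_{j=0}^{i-1}(a-j)\big) \ge \order_p(i!)$, because $i!$ divides the product as an integer. Taking $a = i$ yields $\prod_{j=0}^{i-1}(i-j) = i!$ exactly, so this lower bound is attained at $a = i$. Hence $a = i$ is a valid minimizing choice at step $i$, and --- this is the crux --- it is minimizing for all primes $p$ at once, precisely because the divisibility $i! \mid \prod_{j=0}^{i-1}(a-j)$ is a statement about integers rather than about any single $p$-adic valuation. By induction the whole sequence $0,1,2,\dots$ is a simultaneous $p$-ordering.

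I expect no serious obstacle: the entire content is the integrality of $\binom{a}{i}$ for all $a \in \ZZ$, which is exactly what decouples the $p$-adic minimization across all primes. The only mild care needed is the verification of divisibility for $a \notin \{i, i+1, \dots\}$, handled above by a vanishing factor or the elementary sign symmetry. As a byproduct, combining $\prod_{j=0}^{i-1}(i-j) = i!$ with $\nu_i(\ZZ, p) = p^{\order_p(i!)}$ and summing Legendre's valuation formula over all $p$ recovers the de Polignac--Legendre factorization~\eqref{eqn:de-Polignac}.
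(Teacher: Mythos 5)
Your proof is correct: the integrality of $\binom{a}{i}$ for all $a\in\ZZ$ gives $\order_p\bigl(\prod_{j=0}^{i-1}(a-j)\bigr)\ge\order_p(i!)$ for every prime $p$ simultaneously, with equality at $a=i$, and your case analysis for $0\le a<i$ (vanishing factor) and $a<0$ (sign symmetry) is sound. This is essentially Bhargava's original argument, which the paper cites for Theorem \ref{thm:Bhargava-simultaneous-p-ordering} without reproving it. The paper's own proof of this fact comes instead through the stronger Theorem \ref{thm:simultaneous-b-ordering1} (simultaneous $b$-ordering for all integers $b\ge2$), and it takes a genuinely different route: it works directly with the additive quantity $\sum_{j=0}^{k-1}\ordr_b(a'-j)$ by writing $\ordr_b(n)=\sum_{i\ge1:\,b^i\mid n}1$, interchanging summations, and counting the $j\in\{0,\dots,k-1\}$ with $j\equiv a'\pmod{b^i}$, which is at least $\lfloor k/b^i\rfloor$ with equality at $a'=k$ (cf.~\eqref{eqn:simultaneous}). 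The distinction matters beyond taste: for composite $b$ the function $\ordr_b$ is not a valuation, and by \eqref{eq:add-mult} the additive minimand of Definition \ref{def:b-orderings} is only bounded \emph{above} by $\ordr_b$ of the product, so your divisibility bound $i!\mid\prod_{j}(a'-j)$ lower-bounds the wrong side and gives no control on $\sum_j\ordr_b(a'-j)$. Thus your argument buys a clean, classical proof of exactly the stated theorem (primes only), while the paper's counting argument buys the extension to all bases $b$, which is what its generalized factorials $k!_{\ZZ,\NN}$ require.
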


From Theorem \ref{thm:Bhargava-simultaneous-p-ordering}, Bhargava deduces that
\begin{equation}\label{eqn:classic-factorial}
\nu_k(\mathbb{Z},p)=w_p\bigg(\prod_{j=0}^{k-1}(k-j)\bigg)=w_p(k!),
\end{equation}
where $w_p(a)$ denotes the highest power of $p$ dividing $a$ (i.e., $w_p(a)=p^{\order_p(a)}$).

Therefore for $S= \mathbb{Z}$ one has
\begin{equation}
k!_\mathbb{Z}=\prod_{p\in\mathbb{P}}w_p(k!)
= \prod_{p \in \PP} p^{\order_p(k!)}
=k!.
\end{equation}

One can in many cases compute generalized factorials by directly determining $p$-sequences.
For the set $S = \PP$, Bhargava \cite[Example 21]{Bhar:00}
stated the following result.

%
%
\begin{thm}[Bhargava \cite{Bhar:00}]\label{thm:Bhargava-prime}
For the set $S= \PP$ of all primes in $\ZZ$,  we have
\begin{equation}
k!_{\PP} = \prod_{\substack{p\in\mathbb{P}\\p \le k}} p^{\left\lfloor \frac{k-1}{p-1}\right\rfloor +\left\lfloor \frac{k-1}{p(p-1)}\right\rfloor
 + \left\lfloor \frac{k-1}{p^2(p-1)}\right\rfloor+\cdots}.
\end{equation}
\end{thm}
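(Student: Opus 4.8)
The plan is to compute, for each prime $p$, the exponent $\alpha_k^{\ast}(\PP,p)$ appearing in $k!_\PP=\prod_{p\in\PP}p^{\alpha_k^{\ast}(\PP,p)}$, and to show it equals the $p$-exponent in the asserted product. By Theorem~\ref{thm:bhar1} the $p$-sequence is independent of the chosen $p$-ordering, so I am free to evaluate $\alpha_k^{\ast}(\PP,p)$ along one convenient $p$-ordering of $\PP$. The starting point is the elementary identity
\begin{equation*}
\order_p\Big(\prod_{j=0}^{i-1}(a_i-a_j)\Big)=\sum_{m\ge 1}\#\{\,0\le j<i:\ a_i\equiv a_j\pmod{p^m}\,\},
\end{equation*}
valid because $\order_p(a_i-a_j)\ge m$ exactly when $a_i\equiv a_j\pmod{p^m}$. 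Thus choosing $a_i$ to minimize the step valuation means choosing $a_i$ to lie in residue classes modulo $p^m$ that are as lightly occupied as possible, simultaneously for all $m$; the whole computation reduces to understanding how the primes distribute among residue classes modulo the powers of $p$.

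Next I would fix $p$ and set up the right $p$-ordering. Every prime $q\neq p$ is a unit modulo $p$, so it occupies one of the $(p-1)p^{m-1}$ residue classes modulo $p^m$ that are coprime to $p$; by Dirichlet's theorem on primes in arithmetic progressions, each such class contains infinitely many primes. The prime $p$ itself is special: since $p\equiv 0\pmod p$ while every other prime is a unit, $p$ is never congruent modulo any $p^m$ to a prime $q\neq p$, so placing $p$ first gives $a_0=p$ with step valuation $0$ and thereafter $a_0$ never contributes to any of the counts above. I would then enumerate the remaining primes by a balanced (van der Corput / mixed-radix) rule: writing $n$ in mixed radix whose least significant digit ranges over the $p-1$ nonzero residues modulo $p$ and whose subsequent digits range over $\{0,\dots,p-1\}$ and control the successive refinements modulo $p^2,p^3,\dots$, I declare $a_{n+1}$ to be an as-yet-unused prime lying in the class specified by the digits of $n$ (such a prime exists by Dirichlet, since only finitely many digits are constrained).

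The key property of this enumeration is that after the first $n$ unit-primes have been placed, the number of them in any given class modulo $p^m$ differs from the number in any other such class by at most $1$; equivalently the class receiving $a_{n+1}$ already contains exactly $\lfloor n/((p-1)p^{m-1})\rfloor$ earlier elements at every level $m$, which is the minimum possible count at that level. Hence at each step the chosen prime minimizes the step valuation, so the sequence is genuinely a $p$-ordering of $\PP$. (One can check that any $p$-ordering of $\PP$ must place $p$ within its first $p$ terms, always with valuation $0$, so this choice is consistent with well-definedness.) Evaluating the displayed identity at $i=k$, where $n=k-1$ unit-primes precede $a_k$, yields
\begin{equation*}
\alpha_k^{\ast}(\PP,p)=\sum_{m\ge 1}\Big\lfloor \frac{k-1}{(p-1)p^{\,m-1}}\Big\rfloor .
\end{equation*}
This vanishes precisely when $p>k$, so forming $\prod_{p}p^{\alpha_k^{\ast}(\PP,p)}$ produces the stated product over primes $p\le k$.

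I expect the \emph{main obstacle} to be establishing the balanced-distribution property and the optimality it entails, that is, proving that the mixed-radix enumeration keeps every level of residue classes evenly filled and that an evenly filled state forces the greedy minimum to be attained at every level at once; this is what makes the sequence a legitimate $p$-ordering rather than merely a plausible one. The remaining ingredients are comparatively routine: the appeal to Dirichlet to realize each prescribed class by an actual prime, and the floor-sum bookkeeping, where the single free slot consumed by $p$ is exactly what replaces the numerator $k$ (as in the case $\SSS=\ZZ$) by $k-1$ here.
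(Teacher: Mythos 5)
Your proposal is correct, and it takes a genuinely different route from the paper. The paper does not prove Theorem \ref{thm:Bhargava-prime} where it is stated; it proves the general-base version (Theorem \ref{thm:B-prime}) in Section \ref{sec:7a} and specializes to $\sT=\PP$, $b=p$. That proof is \emph{global}: the combinatorial Lemma \ref{lem:well-distributed} (balanced partitions minimize $\sum_i\binom{n_i}{2}$) feeds into Proposition \ref{prop:84}, a lower bound on the \emph{cumulative} sums $\sum_{j\le k}\alpha_j(\PP,b,\mathbf{a})$ valid for every $\PP$-test sequence, and then an induction on $k$ converts this weak-majorization inequality into the claim that a constructed sequence (the $\omega(b)$ primes dividing $b$, then one prime in each coprime residue class modulo $b^e$ taken in increasing residue order, for each cutoff $e$, letting $e\to\infty$) is an initial segment of a $b$-ordering; well-definedness then finishes. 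You argue \emph{pointwise} instead: your mixed-radix (van der Corput) enumeration is balanced modulo $p^m$ at every level $m$ simultaneously and at every step, so the chosen prime attains the minimum occupancy at each level at once and therefore minimizes the step valuation outright; the sequence is a $p$-ordering by direct greedy verification, with no cumulative inequality and no induction over competing sequences. Your treatment of the finite-constraint issue is also sound: once the occupancy of the chosen class hits zero at some level it is zero at all higher levels, so only finitely many digits of $n$ constrain the class and Dirichlet supplies the prime. Both arguments invoke Dirichlet and well-definedness (Theorem \ref{thm:bhar1}, resp.\ Theorem \ref{thm:well-definedness}) in the same way, and both rest on the same balanced-distribution phenomenon; your local argument is shorter and more self-contained for the prime case, while the paper's cumulative bound handles the composite-base bookkeeping (the $\omega(b)$ exceptional primes) uniformly and produces Proposition \ref{prop:84} as a reusable majorization statement in the spirit of Theorem \ref{thm:majorization-b}.
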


Bhargava notes that $k!_{\PP}$  coincides with  $2^{\left\lfloor \frac{k}{2}\right \rfloor}$ times the product of the
denominators of the first $\left\lfloor \frac{k}{2} \right\rfloor$ Bernoulli numbers.  This sequence appears in
study of the ring of univariate polynomials over $\QQ$ that are integer-valued on the set of prime numbers,
see  Chabert et al. \cite{CCS:97} and Chabert \cite{Ch:96} and \cite{Ch:07}. 
Other  interesting examples, including the sets $S= \{ n^k: n \ge 1\}$, 
appear in Fares and Johnson \cite{FJ:12} and, for twin primes, in \cite{AC:15}.

Bhargava also considered generalized binomial coefficients. One can define
$$
\binom{k+\ell}{k}_S:=\frac{(k+\ell)!_S}{k!_S\ell!_S}
$$
as  a rational number (or $+\infty$ if
a zero denominator is present).
Bhargava \cite[Theorem~8]{Bhar:00} showed:
%
%
\begin{thm}[Bhargava \cite{Bhar:00}]\label{thm:bhar4}
For any nonempty subset $\SSS$ of $\ZZ$, and any nonnegative integers $k$ and $\ell$, 
the generalized factorial $(k+\ell)!_S$ is divisible by  $k!_S\ell!_S$.
\end{thm}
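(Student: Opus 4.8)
The plan is to reduce the global divisibility to a single prime‑by‑prime inequality and then attack that inequality through the internal combinatorics of $p$‑orderings. Since $k!_{\SSS}=\prod_{p\in\PP}p^{\alpha_k^\ast(\SSS,p)}$, the ratio
\[
\frac{(k+\ell)!_{\SSS}}{k!_{\SSS}\,\ell!_{\SSS}}=\prod_{p\in\PP}p^{\,\alpha_{k+\ell}^\ast(\SSS,p)-\alpha_k^\ast(\SSS,p)-\alpha_\ell^\ast(\SSS,p)}
\]
is a product of prime powers with well‑defined integer exponents (well‑definedness by Theorem \ref{thm:bhar1}), and by unique factorization such a product lies in $\ZZ$ exactly when every exponent is nonnegative. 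So it suffices to prove, for each prime $p$, the super‑additivity $\alpha_{k+\ell}^\ast(\SSS,p)\ge \alpha_k^\ast(\SSS,p)+\alpha_\ell^\ast(\SSS,p)$ (interpreted in $\NN\cup\{+\infty\}$ to cover finite $\SSS$, where both sides may be infinite).

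Next I would fix $p$ and a $p$‑ordering $\mathbf a=(a_i)$. From Definition \ref{def:p-ordering} and \eqref{eqn:23} one has $\alpha_n^\ast(\SSS,p)=\min_{a\in\SSS}\order_p\big(\prod_{j=0}^{n-1}(a-a_j)\big)$, the minimum being attained at $a=a_n$. Splitting the product defining $\alpha_{k+\ell}^\ast$ at the index $k$ gives
\[
\alpha_{k+\ell}^\ast(\SSS,p)=\sum_{j=0}^{k-1}\order_p(a_{k+\ell}-a_j)+\sum_{j=k}^{k+\ell-1}\order_p(a_{k+\ell}-a_j).
\]
Because $a_{k+\ell}\in\SSS$ is an admissible candidate at step $k$, the first sum is at least $\alpha_k^\ast(\SSS,p)$. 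Everything therefore comes down to the \emph{tail bound} $\sum_{j=k}^{k+\ell-1}\order_p(a_{k+\ell}-a_j)\ge\alpha_\ell^\ast(\SSS,p)$.

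This tail bound is the crux, and I expect it to be the main obstacle. It genuinely uses that $a_k,\dots,a_{k+\ell}$ is a consecutive block of a $p$‑ordering: the analogous inequality is false for an arbitrary $\ell$‑element subset $T\subseteq\SSS$, since a cleverly chosen new point can dodge a poorly spread $T$ and make $\min_{a\in\SSS}\sum_{c\in T}\order_p(a-c)=0<\alpha_\ell^\ast$. The tool I would develop is the residue‑refinement structure of $p$‑orderings: if $\SSS$ meets the residues $c_1,\dots,c_d$ modulo $p$ and $\SSS_r=\SSS\cap(c_r+p\ZZ)$, then the subsequence of $\mathbf a$ landing in $c_r+p\ZZ$ is itself a $p$‑ordering of $\SSS_r$, the blocks are interleaved in index order, and for same‑class indices $\order_p(a_i-a_j)=1+\order_p\!\big(\tfrac{a_i-a_j}{p}\big)$ is governed by a $p$‑ordering of $\tfrac1p(\SSS_r-c_r)$. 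This produces a recursion for $\alpha_n^\ast(\SSS,p)$ whose leading term merely counts same‑class pairs. I would then prove the tail bound by induction on $\ell$, with an inner descent through the refinement: the leading same‑class‑pair count is super‑additive by the elementary floor inequality $\lfloor (k+\ell)/d\rfloor\ge\lfloor k/d\rfloor+\lfloor \ell/d\rfloor$ applied to a balanced block, while the deeper contributions are controlled by the inductive hypothesis on the strictly smaller sets $\tfrac1p(\SSS_r-c_r)$. The delicate point will be verifying that the consecutive tail block distributes across residue classes in exactly the balanced pattern the recursion requires, so that the floor inequality applies at every level; it is precisely the interleaving‑in‑order statement that keeps the block balanced under refinement.

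An alternative route would sidestep the tail bound entirely. Each $f_n(x)=\prod_{j=0}^{n-1}(x-a_j)$ satisfies $n!_{\SSS}\mid f_n(a)$ for all $a\in\SSS$ by the same minimality, so $f_n/n!_{\SSS}$ is integer‑valued on $\SSS$; then $(f_k/k!_{\SSS})(f_\ell/\ell!_{\SSS})$ is integer‑valued of degree $k+\ell$, and expanding it in the basis $\{f_n/n!_{\SSS}\}_{n\ge0}$ with integer coefficients, its leading coefficient is exactly $\binom{k+\ell}{k}_{\SSS}$, forcing integrality. This is cleaner in form but shifts the burden onto proving that $\{f_n/n!_{\SSS}\}$ is a $\ZZ$‑basis of the integer‑valued polynomials on $\SSS$, whose spanning direction is of comparable difficulty to the structural lemma above; for that reason I would pursue the direct prime‑local argument first.
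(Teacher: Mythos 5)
Your reduction to the prime-local superadditivity $\ivr_{k+\ell}^{\ast}(\SSS,p)\ge \ivr_k^{\ast}(\SSS,p)+\ivr_\ell^{\ast}(\SSS,p)$ is correct and matches the paper's strategy (the paper proves it for all bases $b$, with $\T=\PP$ recovering this statement). But your key lemma, the tail bound $\sum_{j=k}^{k+\ell-1}\ordr_p(a_{k+\ell}-a_j)\ge \ivr_\ell^{\ast}(\SSS,p)$ for an arbitrary $p$-ordering, is false, not merely hard. Take $\SSS=\{0,1,2,3\}$, $p=2$, and the sequence $(a_0,a_1,a_2,a_3)=(0,1,3,2)$: this is a legitimate $2$-ordering, since at step $2$ both candidates $2$ and $3$ give the minimal value $1$. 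Here $\ivr_2^{\ast}(\SSS,2)=1$, yet for $k=1$, $\ell=2$ the tail is $\ordr_2(2-1)+\ordr_2(2-3)=0$. The aggregate inequality survives only because the slack sits in the first block, $\ordr_2(2-0)=1\ge \ivr_1^{\ast}=0$. The failure occurs exactly at the point you flagged as delicate: the interior block $(a_1,a_2)=(1,3)$ lies entirely in one residue class mod $2$, so the next element dodges it completely. Balancedness across residue classes is a property of \emph{initial} segments of $p$-orderings, not of interior consecutive blocks, so the refinement induction you sketch cannot go through; one could hope some specially chosen ordering satisfies the tail bound, but proving that would require precisely the structural control you are trying to establish.

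Your alternative route also has a gap at its first step: minimality for a $p$-ordering gives only $p^{\ivr_n^{\ast}(\SSS,p)}\mid f_n(a)$, so concluding $n!_{\SSS}\mid f_n(a)$ requires $(a_j)$ to be a $p$-ordering for \emph{every} prime simultaneously, which general $\SSS$ does not admit. For instance, with $\SSS=\{0,1,2,4\}$ the sequence $(4,1,2)$ is a valid initial $2$-ordering, yet $f_3(0)=(0-4)(0-1)(0-2)=-8$ is not divisible by $3!_{\SSS}=2^3\cdot 3=24$. The paper's proof (of the generalization, Theorem \ref{thm:binomial-integer}, via Proposition \ref{prop:4-prop}(2) and Theorem \ref{thm:binomial}) avoids both problems by comparing two polynomials built only from \emph{initial} segments: $q_kq_\ell$ is a monic, hence primitive, polynomial of degree $k+\ell$ satisfying $\min_{a\in\SSS}\ordr_p\left(q_k(a)q_\ell(a)\right)\ge \ivr_k^{\ast}(\SSS,p)+\ivr_\ell^{\ast}(\SSS,p)$ by valuation additivity and the minimality of each factor, and the real content is the upper bound of the max-min characterization (Theorem \ref{thm:max-min}, via Proposition \ref{prop:Polya} and the triangular-basis Lemma \ref{lem:polynomial}): every primitive polynomial of degree $k+\ell$ has minimum valuation over $\SSS$ at most $\ivr_{k+\ell}^{\ast}(\SSS,p)$. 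That lemma is the missing idea; it plays the role your tail bound was meant to play, but it quantifies over all primitive polynomials of the given degree rather than over interior blocks of a single ordering.
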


In other words,  generalized binomial coefficients
$\binom{k+\ell}{k}_S := \frac{(k+\ell)!_S}{k!_S\ell!_S}$
are always integers. However, such binomial
 coefficients generally do not satisfy  a Pascal-type recurrence.

Bhargava \cite[Theorem ~10]{Bhar:00} proved a divisibility result for general products
of pairwise differences of a sequence of $n+1$ integers in $\SSS$.

%
%
\begin{thm}[Bhargava \cite{Bhar:00}]\label{thm:bhar10}
Let $a_0, a_1, \cdots, a_n \in \SSS$ be any $n+1$ integers. Then the product
\begin{equation*} 
\prod_{0 \le i < j\le n} (a_i-a_j)
\end{equation*} 
is a multiple of $0!_{\SSS} 1!_{\SSS} \cdots n!_{\SSS}$. (This is sharp!)
\end{thm}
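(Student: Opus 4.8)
The plan is to reduce the divisibility to one inequality per prime and then to control the common value of the product of pairwise differences through the residue classes it determines. Since
$0!_{\SSS}1!_{\SSS}\cdots n!_{\SSS}=\prod_{p\in\PP}p^{\sum_{k=0}^{n}\ivr_k^{\ast}(\SSS,p)}$
is a finite product (only finitely many primes contribute), the claimed divisibility is equivalent to proving, for every prime $p$,
\begin{equation*}
\order_p\!\Big(\prod_{0\le i<j\le n}(a_i-a_j)\Big)\ \ge\ \sum_{k=0}^{n}\ivr_k^{\ast}(\SSS,p).
\end{equation*}
If two of the $a_i$ coincide the left-hand product is $0$ and there is nothing to prove, so I may assume the $a_i$ are distinct and fix $p$ for the rest of the argument.

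The first step is the \emph{reordering identity}: for any ordering of $n+1$ distinct integers,
$\order_p\big(\prod_{i<j}(a_i-a_j)\big)=\sum_{k=0}^{n}\order_p\big(\prod_{j=0}^{k-1}(a_k-a_j)\big)$,
since each unordered pair contributes exactly once. Applied to the first $n+1$ terms $g_0,\dots,g_n$ of a $p$-ordering of $\SSS$, this identity together with \eqref{eqn:23} and Theorem \ref{thm:bhar1} shows that the right-hand side $\sum_{k=0}^{n}\ivr_k^{\ast}(\SSS,p)$ is itself the valuation $\order_p\big(\prod_{i<j}(g_i-g_j)\big)$ of a $p$-ordering prefix. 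Thus the theorem is precisely the assertion that a $p$-ordering prefix \emph{minimizes} the $p$-adic valuation of the Vandermonde product over all choices of $n+1$ distinct elements of $\SSS$: the displayed inequality is the ``$\ge$'' half, while the parenthetical sharpness claim is the ``$=$'' half, attained by these prefixes.

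To obtain a lower bound valid for \emph{every} choice, I would expand the valuation level by level. Writing $n_r^{(e)}$ for the number of chosen $a_i$ lying in the residue class $r$ modulo $p^{e}$, the identity $\order_p(x-y)=\#\{e\ge1:p^{e}\mid x-y\}$ gives
\begin{equation*}
\order_p\!\Big(\prod_{0\le i<j\le n}(a_i-a_j)\Big)=\sum_{e\ge1}\ \sum_{r\bmod p^{e}}\binom{n_r^{(e)}}{2}.
\end{equation*}
For each fixed level $e$ the inner sum is a convex function of the occupancy vector $(n_r^{(e)})$ subject to $\sum_r n_r^{(e)}=n+1$ and to the capacity constraints imposed by $\SSS$ (a class may be occupied only as often as $\SSS$ populates it). Hence each level is bounded below by the value $L_e$ of the most even admissible distribution, and summing over $e$ yields $\order_p\big(\prod_{i<j}(a_i-a_j)\big)\ge\sum_{e\ge1}L_e$ uniformly in the choice of the $a_i$.

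It then remains to identify $\sum_{e\ge1}L_e$ with $\sum_{k=0}^{n}\ivr_k^{\ast}(\SSS,p)$. One inequality is free: applying the uniform bound to a $p$-ordering prefix gives $\sum_{k}\ivr_k^{\ast}(\SSS,p)\ge\sum_{e}L_e$. The reverse inequality --- that a $p$-ordering prefix attains the most even distribution \emph{simultaneously at every level} $p^{e}$ --- is the crux, and I expect it to be the main obstacle. The difficulty is that the greedy construction of a $p$-ordering minimizes total collisions step by step, which is a priori weaker than minimizing each level separately, and the levels are linked because residues modulo $p^{e+1}$ refine those modulo $p^{e}$. I would resolve this by a tree/exchange argument on the rooted tree of residue classes of $\SSS$, showing that inserting each new element along a least-loaded root-to-leaf branch keeps every level as even as the capacities of $\SSS$ permit, while Theorem \ref{thm:bhar1} guarantees that the resulting increments are the canonical invariants $\ivr_k^{\ast}(\SSS,p)$ independent of the branch choices. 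Equivalently, one proves the monotonicity $\ivr_k^{\ast}(T,p)\ge\ivr_k^{\ast}(\SSS,p)$ for finite $T\subseteq\SSS$, which packages the same content. Granting this, the per-prime inequality holds for all $p$, giving the divisibility; and sharpness follows because for each prime separately the most even distribution is realized by a $p$-ordering prefix, so the exponent of $p$ in $0!_{\SSS}\cdots n!_{\SSS}$ is exactly the minimal attainable $p$-adic valuation.
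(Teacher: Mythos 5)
Your reduction skeleton is sound, and it is in fact the same skeleton the paper uses: reduce to one inequality per prime, use the reordering identity to write the Vandermonde valuation of the chosen points as $\sum_{k}\ivr_k^{\ast}(T,p)$, where $T=\{a_0,\dots,a_n\}$ has been reordered into a $p$-ordering of itself (legitimate by Theorem \ref{thm:bhar1} applied to the finite set $T$), and then conclude from the monotonicity $\ivr_k^{\ast}(T,p)\ge\ivr_k^{\ast}(\SSS,p)$ for $T\subseteq\SSS$. This is precisely how the paper deduces its generalization, Theorem \ref{thm:bhar10revisited} (whose case $\sT=\PP$ recovers the present statement), from Theorem \ref{thm:majorization-b}; the underlying proof (Theorem \ref{thm:majorization}) consists of exactly these moves: permutation invariance of the double sum, reordering the finite subset into a $t$-ordering, well-definedness applied to that subset, and then the inclusion-monotonicity of Theorem \ref{thm:subset-decreasing}.

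The problem is that the one step carrying all the mathematical weight --- monotonicity of the invariants under set inclusion, equivalently your claim that a $p$-ordering prefix is simultaneously as equidistributed as the capacities of $\SSS$ permit at every level $p^{e}$ --- is exactly the step you do not prove; you flag it yourself as ``the main obstacle'' and offer only a sketch of a tree/exchange argument. This is a genuine gap, not a routine verification: the greedy definition of a $p$-ordering minimizes the \emph{total} collision count one step at a time, and nothing formal in your proposal upgrades that to per-level optimality or to monotonicity in $\SSS$; Theorem \ref{thm:bhar1}, which you do invoke, gives well-definedness for a fixed set but says nothing about comparing invariants of different sets. The paper closes precisely this gap with a dedicated device, the max-min characterization of Theorem \ref{thm:max-min}: each invariant is a maximum over $t$-primitive polynomials of a minimum over the set, a formula that is manifestly non-increasing when the set is enlarged (Theorem \ref{thm:subset-decreasing}, transported to $b$- and $p$-orderings via Proposition \ref{prop:4-prop}); in Bhargava's original paper the same role is played by his Lemma 13, which likewise requires its own proof. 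Everything else in your proposal --- the per-prime reduction, the reordering identity, the expansion $\sum_{e}\sum_{r}\binom{n_r^{(e)}}{2}$, and the sharpness remark --- is correct bookkeeping, but as written the argument stops exactly where the real proof begins.
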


Here ``This is sharp!" means that the constant $0!_{\SSS} 1!_{\SSS} \cdots n!_{\SSS}$
cannot be improved, taken over  all sequences of length $n+1$ drawn from $\SSS$.

%
%

\section{Main results }\label{sec:2}

We define  $b$-orderings, $b$-sequences and $b$-exponent sequences for nonempty
subsets $\SSS$ of $\ZZ$, 
for all integers $b \ge 2$, and
we also define these quantities  for  $b=0$ and $b=1$. We define generalized factorials
 $k!_{S, \sT}$ allowing $b$ in any set $\sT \subseteq \NN$.

%
%

\subsection{ $b$-orderings  and $b$-exponent sequences for arbitrary bases $b \ge 2$ in $\ZZ$.}\label{sec:12}

 It is convenient to  give a definition of $b$-exponent sequences
 that applies   in a more general context, that of $\SSS$-test sequences.

%
%
\begin{definition}\label{def:test-seq}
 An {\em $\SSS$-test sequence} 
 $\mathbf{a}=\left(a_i\right)_{i=0}^\infty$ is any infinite sequence of elements drawn from $\SSS$
 (allowing repetitions of elements).
For integer $b \ge 2$, the   \emph{ $b$-exponent sequence of the $\SSS$-test sequence $\mathbf{a}$}, denoted 
$\left(\ivr_i(\SSS,b,\mathbf{a})\right)_{i=0}^\infty$, is given by
\begin{equation}\label{def:b-sequence-wrt-a}
\ivr_i(\SSS,b,\mathbf{a}):=\sum_{j=0}^{i-1}\ordr_{b}\left(a_i-a_j\right),
\end{equation}
where $\ordr_{b}(a)$ for $b\ge2$ is defined for $a \in \ZZ$ by
\begin{equation}\label{eqn:ord-b}
\ordr_{b}(a):=\sup \{k \in \NN \, : \,  b^k  \mid a \},
\end{equation}
which makes $\ordr_{b}(0)= +\infty$ for all $ b \ge 2$,.
By convention $\ivr_0(\SSS, b, \mathbf{a}) =0,$ for all $b \ge 2$.
 The   {\em $b$-sequence} of the $\SSS$-test $\mathbf{a}$ is then
$( b^{\ivr_i(\SSS, \mathbf{a})})_{i=0}^{\infty}$.
\end{definition}

We note that:
\begin{enumerate}
\item[(1)] $\ivr_i(\SSS,b,\mathbf{a})\in\mathbb{N}\cup\{+\infty\}$.
\item[(2)] If $\SSS$ is nonempty and finite, then $\ivr_i(S, b,\mathbf{a})<+\infty$ for at most $\vert S\vert$ values of $i$.
\end{enumerate}

We define  $b$-orderings of $\SSS$ to be  a restricted class of $\SSS$-test  sequences, determined by
a greedy algorithm that minimizes  $\sum_{j=0}^{i-1} \ordr_b(a_i - a_j)$ at each step.

%
%
\begin{definition}\label{def:b-orderings}
 Let $S$ be a nonempty subset of  $\ZZ$, and let $b \ge 2$ be an integer.
  We call an $\SSS$-test sequence $\mathbf{a}=\left(a_i\right)_{i=0}^\infty$  a  
 \emph{$b$-ordering of $\SSS$} if for all $i=1,2,3,\dots$, the choice of $a_i$
 attains a minimum
\begin{equation}\label{eqn:b-orderings}
\sum_{j=0}^{i-1}\ordr_{b}\left(a_i-a_j\right)=\min_{a'\in \SSS}\,\bigg( \sum_{j=0}^{i-1}\ordr_{b}(a'-a_j) \bigg).
\end{equation}
\end{definition}

Given  any initial value $a_0\in \SSS$, one can find a $b$-ordering having that initial value using the recurrence \eqref{eqn:b-orderings}. There will be more than one 
 $b$-ordering of $\SSS$, unless $\SSS$ is a singleton.

The definition of $b$-orderings differs in form  Definition \ref{def:p-ordering}  for $p$-ordering,
given in \cite{Bhar:00},  see  \eqref{eqn:23}. 
When $b=p$ is a prime,
the definitions agree,  due
to the  valuation identity
\begin{equation*}
\sum_{j=0}^{i-1} \ordr_{p} (a'- a_j) = \ordr_{p} \bigg( \prod_{j=0}^{i-1}(a'- a_j)\bigg).
\end{equation*}
For composite $b$, the function $\ordr_b(\cdot)$
is not an additive valuation, and in general  one has only
\begin{equation*}
\ordr_b(a_1 a_2) \ge \ordr_b(a_1) + \ordr_b(a_2),  \quad \mbox{for}  \quad a_1, a_2 \in \ZZ.
\end{equation*}
In 2009  Bhargava \cite{Bhar:09} introduced the additive definition \eqref{eqn:b-orderings} in the case  $b=p$, as a  special case
of his generalized $p$-orderings, see Section \ref{subsec:91}.

 This paper's focus is  $b \ge 2$, but
Definitions \ref{def:test-seq} and \ref{def:b-orderings} are extendable  to 
``extreme" cases $b=0$ and $b=1$.
 These cases are discussed  in Section \ref{subsec:proofs-53},
for completeness.  The extensions use 
a modified definition of $\ordr_b(\cdot)$, given in \eqref{eqn:ord-b2}.

The main result of this  paper is as follows.

%
%
\begin{theorem}[Well-definedness of the $b$-exponent sequence of $S$ in $\ZZ$]\label{thm:well-definedness}
For all  $b \ge 0$ and 
all nonempty subsets $\SSS$  of  $\ZZ$, if  $\mathbf{a}_1$ and $\mathbf{a}_2$ are 
 $b$-orderings of $S$,
 then
 \begin{equation}\label{eqn:b-sequence} 
 \ivr_i\left(\SSS,b,\mathbf{a}_1\right)=\ivr_i\left(S,b,\mathbf{a}_2\right) \quad \mbox{ for all} \quad  i=0,1,2,\dots.
 \end{equation}
\end{theorem}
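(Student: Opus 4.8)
The plan is to convert the non-additive function $\ordr_b$ into a genuine discrete valuation, so that the ordering-independence argument can be run in a setting where Bhargava's method applies. The obstruction to imitating the prime case directly is that for composite $b$ the quantity $\ordr_b$ is not a valuation: one has only $\ordr_b(xy)\ge\ordr_b(x)+\ordr_b(y)$, so the sum $\sum_{j=0}^{i-1}\ordr_b(a_i-a_j)$ is \emph{not} the valuation of a single product, and the identity available in the prime case fails. I first reduce to the case $b\ge2$; the extreme cases $b=0,1$ use the modified definition of $\ordr_b$ and are treated separately in Section~\ref{subsec:proofs-53}.

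The key step is to build a congruence-preserving embedding $\phi\colon\ZZ\to\QQ[[t]]$ that turns $\ordr_b$ into $\ordr_t$, the valuation attached to the prime ideal $\pp=t\QQ[[t]]$. For $a\in\ZZ$ let $d_k(a)\in\{0,1,\dots,b-1\}$ be its $b$-adic digits, determined by $a\equiv\sum_{k=0}^{n-1}d_k(a)\,b^k\pmod{b^n}$ for every $n\ge1$, and set
\[
\phi(a):=\sum_{k\ge0}d_k(a)\,t^k\in\QQ[[t]].
\]
The central lemma is that
\[
\ordr_b(a-a')=\ordr_t\big(\phi(a)-\phi(a')\big)\qquad\text{for all }a,a'\in\ZZ.
\]
Indeed, $\ordr_b(a-a')\ge k$ is equivalent to $a\equiv a'\pmod{b^k}$, which holds if and only if the first $k$ $b$-adic digits of $a$ and $a'$ coincide; since $\phi(a)-\phi(a')=\sum_k\big(d_k(a)-d_k(a')\big)t^k$ and $\QQ$ has characteristic $0$, no nonzero digit difference can cancel, so $\ordr_t(\phi(a)-\phi(a'))$ equals the first index at which the digits disagree, which is exactly $\ordr_b(a-a')$. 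In particular $\phi$ is injective on $\ZZ$.

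With this lemma the transfer is immediate. Because $\ordr_t$ is an additive valuation on $\QQ[[t]]$, for any $\SSS$-test sequence $\mathbf{a}$ we have
\[
\ivr_i(\SSS,b,\mathbf{a})=\sum_{j=0}^{i-1}\ordr_b(a_i-a_j)=\sum_{j=0}^{i-1}\ordr_t\big(\phi(a_i)-\phi(a_j)\big)=\ordr_t\Big(\prod_{j=0}^{i-1}\big(\phi(a_i)-\phi(a_j)\big)\Big),
\]
which is precisely the $i$-th Bhargava $\pp$-ordering exponent of the image set $\phi(\SSS)\subseteq\QQ[[t]]$ evaluated along $(\phi(a_i))_i$. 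Since the lemma makes the minimization \eqref{eqn:b-orderings} defining a $b$-ordering of $\SSS$ agree term by term with the minimization defining a $\pp$-ordering of $\phi(\SSS)$, and $\phi$ is a bijection $\SSS\to\phi(\SSS)$, the sequence $\mathbf{a}$ is a $b$-ordering of $\SSS$ if and only if $(\phi(a_i))_i$ is a $\pp$-ordering of $\phi(\SSS)$, and the two exponent sequences coincide. Thus Theorem~\ref{thm:well-definedness} reduces to the well-definedness of the $\pp$-ordering exponents of an arbitrary subset of the discrete valuation ring $\QQ[[t]]$.

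The main work is therefore this last reduction: proving that the $\pp$-ordering exponents in $\QQ[[t]]$ are independent of the chosen $\pp$-ordering. I expect this to be the crux, and I would establish it through a max-min characterization of the exponents (Section~\ref{sec:Bhargava-Z[[t]]}) that exhibits each invariant intrinsically in terms of $\phi(\SSS)$ rather than through the greedy recursion, thereby removing any dependence on the ordering; this is the analogue for the valuation $\ordr_t$ of Bhargava's Theorem~\ref{thm:bhar1}, and it is here that genuine additivity of $\ordr_t$ is used. The two remaining technical points are the careful verification of the digit-disagreement identity for $\phi$ (including the infinite $b$-adic expansions of negative integers, where $\phi(a)$ is a genuine power series such as $\sum_k(b-1)t^k$) and the separate, elementary treatment of the extreme cases $b=0,1$.
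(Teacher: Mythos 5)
Your proposal is correct and follows essentially the same route as the paper: your digit map $\phi$ is exactly the paper's congruence-preserving map $\varphi_b$ of Proposition \ref{prop:varphi_B-exists-for-Z}, your transfer of $b$-orderings of $\SSS$ to $t$-orderings of $\varphi_b(\SSS)$ is Theorem \ref{thm:main-propertyCZ}, and you defer the crux to the same max-min characterization (Theorem \ref{thm:max-min}) that the paper uses to prove Theorem \ref{thm:invariant}, with the extreme cases $b=0,1$ handled separately as in Section \ref{subsec:proofs-53}.
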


This generalization to $b$-exponent sequences 
gives a set of well-defined invariants $\ivr_k(S, b) := \ivr_k(S, b, \mathbf{a})$  
for any choice of  $b$-ordering $\mathbf{a}$.
 We call   the sequence $(\ivr_i(S, b))_{i=0}^\infty$
 the {\em $b$-exponent sequence of $\SSS$}, and we call $(b^{\ivr_i(\SSS, b)})_{i=0}^{\infty}$ the {\em $b$-sequence of $\SSS$}.
 
 The cases $b=0$ and $b=1$ included in Theorem \ref{thm:well-definedness} 
 are ``extreme" in the sense that for each $b \ge 2$ and all 
 nonempty $\SSS$, the bounds
  \begin{equation}\label{eq:extreme}
\ivr_{i}(\SSS, 0) \le \ivr_{i}(\SSS, b) \le \ivr_{i}(\SSS, 1)
 \end{equation}
hold  for all $i \ge 0,$ see Remark \ref{rem:55}.

Individual $b$-orderings $\mathbf{a}$  satisfy the inequalities 
\begin{equation}\label{eq:add-mult}
\ivr_k(S, b, \mathbf{a}) = \sum_{j=0}^{k-1} \ordr_{b} (a_k - a_j) \le 
\ordr_{b} \bigg( \prod_{j=0}^{k-1} (a_k - a_j)\bigg)=: \ivr_k^{\ast}(S, b, \mathbf{a}).
\end{equation} 
When $b=p$ is   prime  \eqref{eq:add-mult} is always  equality.
When $b$ is not a prime strict inequality may
occur in  \eqref{eq:add-mult}.

%
%
\begin{example}\label{exam:35}
The well-definedness result of $b$-exponent sequences does not hold for an analogous
notion  of $b^{\ast}$-orderings based on Definition \ref{def:p-ordering}.
Define a {\em $b^{\ast}$-ordering} to be one that minimizes
$\ordr_b \left( \prod_{j=0}^{k-1} (a' - a_j)\right)$ at each step.  
 Take $b=6$
and $S= \{0, 1, 2, 3, 4, 5\}$.
 Then $\ba =  (a_0, a_1, a_2, a_3) = (0, 1, 2, 5)$ is an initial  $6$-ordering 
 that is also a $6^{\ast}$-ordering, and it has 
$\ivr_3(S, 6, \mathbf{a}) =0$ while  $\ivr_3^{\ast}(S, 6,  \ba) = 1$. 
Here the inequality \eqref{eq:add-mult}  is  strict.
Additionally  $\ba^{'}= (a_0^{'}, a_1^{'}, a_2^{'}, a_3^{'}) = (0, 2, 4, 5)$ is  
an  initial $6^{\ast}$-ordering, and it has
$ \ivr_3^{\ast}(S, 6, \ba')=0$,
which shows that  the value of $\ivr_3^{\ast}(S, 6, \ba)$ is dependent on the choice of 
 $6^{\ast}$-ordering $\mathbf{a}$ of $S$. 
\end{example}

  We prove     Theorem \ref{thm:well-definedness}, as part of  Theorem \ref{thm:main-propertyCZ},
  in Section \ref{subsec:52}. 
    The  method of proof is a reduction to a special case of Bhargava's main theorem:
  the well-definedness of $t$-sequences  associated to $t$-orderings 
 for  the prime ideal $ (t) = t \ddR[[t]]$ of the
 formal power-series ring $\ddR[[t]]$,  over the rational field $\QQ$, for
 all nonempty subsets $U$ of $\ddR[[t]]$. 
  The ring $\ddR[[t]]$ is  a discrete valuation ring, hence a Dedekind domain. 
  This special case is  stated as Theorem \ref{thm:invariant} in Section \ref{sec:Bhargava-Z[[t]]}, and  
   the paper gives an  independent  proof of this special case, making the paper self-contained.
   
  The  idea  of the reduction uses for each $b$ an injective 
 ``congruence-preserving"  map $\varphi_{b}: \ZZ \to \QQ[[t]]$ that converts $b$-orderings to $t$-orderings for all sets $\SSS \subseteq \ZZ$ simultaneously, and preserves the invariants.
 In general these  maps do not preserve ring structure;
  they may  not  be group  homomorphisms for  ring addition nor
 monoid homomorphisms for  ring multiplication.

%
%

\subsection{Max-min criterion for $b$-exponent sequence invariants and  majorization }\label{subsec:32a}

This    paper    formulates in Theorem \ref{thm:max-min} a new characterization of the $t$-exponent sequence invariants for  Bhargava's
 theory for the ideal $\pp= t \ddR[[t]]$ of the ring $\ddR[[t]]$, 
 in terms of a max-min condition that is visibly independent of the $t$-ordering.
 This characterization   implies  Theorem \ref{thm:invariant}, 
 and yields  new  information about $b$-exponent sequences,   given below. 
 This max-min characterization and its proof are due to the second author 
   (\cite[Proposition 3.4.8]{Yangjit:22}).

For $U $ a subset of $\QQ[[t]]$ the  max-min characterization  yields  restrictions on 
 allowed $t$-exponent sequences for $U$-test sequences,  which in turn 
yields restrictions  on the allowed $b$-exponent sequences for general $\SSS$-test  sequences.

%
%
\begin{thm}[Weak majorization for $\SSS$-test sequences] \label{thm:majorization-b}
Let $b \ge 2$.

 \emph{(1)} For  any subset $\SSS$ of $\ZZ$ with $|\SSS| \ge n$  (allowing $n = +\infty$) and any
 $\SSS$-test sequence   $\mathbf{s}=(s_i)_{i=0}^n$, 
 the associated $b$-exponent sequence values $(\alpha_{k}(\SSS, b, \mathbf{s}))_{k=0}^n$ 
weakly majorize the $b$-exponent sequence invariants $(\ivr_k(\SSS, b))_{k=0}^n$ of $\SSS$. That is,
for $1 \le m < n$, 
\begin{equation}\label{eqn:majorization-1}
\sum_{k=1}^{m} \alpha_{k}(\SSS, b, \mathbf{s})   
\ge\sum_{k=1}^m\ddnu_k\left(\SSS,b \right).
\end{equation}

\emph{(2)} Given a fixed $N \ge 1$,  if  the $\SSS$-test sequence
$(s_i)_{i=0}^N$ is the
initial part of a $b$-ordering of $\SSS$,  then  equality 
holds in \eqref{eqn:majorization-1} for $1 \le m \le N$.
\end{thm}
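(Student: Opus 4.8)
The plan is to reduce both parts to a single statement about \emph{total pairwise valuation sums} and then to recognize the invariant partial sum as a minimum. First I would record the symmetric identity
\begin{equation*}
\sum_{k=1}^{m}\ivr_k(\SSS,b,\mathbf{s})=\sum_{k=1}^{m}\sum_{j=0}^{k-1}\ordr_b(s_k-s_j)=\sum_{0\le i<j\le m}\ordr_b(s_j-s_i)=:E_m(\mathbf{s}),
\end{equation*}
so the left side of \eqref{eqn:majorization-1} depends only on the multiset $\{s_0,\dots,s_m\}$, not on the order in which its elements are listed. With this identity part (2) is immediate: if $(s_i)_{i=0}^N$ is an initial segment of a $b$-ordering, then by Theorem \ref{thm:well-definedness} its exponents coincide termwise with the invariants, $\ivr_k(\SSS,b,\mathbf{s})=\ivr_k(\SSS,b)$ for $0\le k\le N$, so equality holds in \eqref{eqn:majorization-1} for every $1\le m\le N$. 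It then remains to prove the inequality in part (1), which by the identity is exactly the assertion that a $b$-ordering minimizes $E_m$ over all length-$(m+1)$ selections from $\SSS$, since a $b$-ordering itself realizes $E_m=\sum_{k=1}^m\ivr_k(\SSS,b)$. This is the additive analogue of the sharp divisibility in Theorem \ref{thm:bhar10}, but for composite $b$ the product form fails because $\ordr_b$ is not a valuation, so the argument must be run additively.

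Next I would expand $\ordr_b$ level by level, using $\ordr_b(a)=\sum_{\ell\ge1}[\,b^\ell\mid a\,]$, to obtain
\begin{equation*}
E_m(\mathbf{s})=\sum_{\ell\ge1}\ \sum_{r\bmod b^\ell}\binom{n_{\ell,r}(\mathbf{s})}{2},
\end{equation*}
where $n_{\ell,r}(\mathbf{s})$ counts the indices $i\le m$ with $s_i\equiv r\pmod{b^\ell}$. Minimizing $E_m$ thereby becomes a separable convex minimization of $\sum_v\binom{n_v}{2}$ over the nested ``residue tree'' of $\SSS$ (nodes at depth $\ell$ are the residues mod $b^\ell$ met by $\SSS$, with refinement edges), subject to the flow constraints $n_v=\sum_{w\text{ child of }v}n_w$ and root count $m+1$. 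Because $\binom{\cdot}{2}$ is convex, the minimum should be attained by distributing the $m+1$ elements as evenly as the tree permits at each node; and the greedy rule defining a $b$-ordering, which at step $k$ adds the element creating the fewest collisions $\sum_{\ell}\#\{j<k:a_k\equiv a_j\!\pmod{b^\ell}\}$, is precisely the step-by-step realization of this balanced assignment.

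To make the global optimality of the greedy assignment rigorous I would invoke the max--min characterization of the invariants rather than argue the tree minimization by hand. Applying the congruence-preserving injection $\varphi_b\colon\ZZ\to\QQ[[t]]$ of Section \ref{sec:5} to $\SSS$ and to $\mathbf{s}$ converts every $\ordr_b$-sum into the corresponding $\ordr_t$-valuation sum in the discrete valuation ring $\QQ[[t]]$, with the invariants preserved; there $\ordr_t$ is a genuine valuation, so $E_m$ is literally the total pairwise valuation of the image points. Theorem \ref{thm:max-min} then expresses $\sum_{k=1}^m\ivr_k(\SSS,b)$ as the minimum of this total over all length-$(m+1)$ subsets, visibly independent of any ordering, and the inequality \eqref{eqn:majorization-1} follows at once. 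The main obstacle is exactly this lower bound, namely showing that the balanced (greedy) selection is a \emph{global} minimizer and not merely a stepwise one: the capacities of the subtrees are dictated by the arithmetic of $\SSS$, and convex balancing at a node can compete with what is achievable within its subtrees. Reconciling these nested constraints is precisely what the max--min criterion of Theorem \ref{thm:max-min}, and hence Theorem \ref{thm:invariant}, is designed to handle.
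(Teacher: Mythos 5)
Your outline follows the paper's own route in its main ingredients: the symmetric identity rewriting $\sum_{k=1}^m \ivr_k(\SSS,b,\mathbf{s})$ as the total pairwise sum $\sum_{0\le i<j\le m}\ordr_b(s_j-s_i)$, the reduction via the congruence-preserving map $\varphi_b$ to the valuation setting in $\QQ[[t]]$, and the disposal of part (2) by termwise equality for initial segments of $b$-orderings. This is exactly how the paper proceeds: it applies Theorem \ref{thm:majorization} with $\ddS=\varphi_b(\SSS)$ and $g_i(t)=\varphi_b(s_i)$, then translates both sides back using \eqref{eqn:equal-valuations1} and \eqref{eqn:equal-inv}. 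Your treatment of part (2) is correct and, if anything, cleaner than the paper's appeal to the equality case of Theorem \ref{thm:majorization}.

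However, there is a genuine gap at the decisive step of part (1). You assert that Theorem \ref{thm:max-min} ``expresses $\sum_{k=1}^m\ivr_k(\SSS,b)$ as the minimum of this total over all length-$(m+1)$ subsets.'' It does not: Theorem \ref{thm:max-min} is a max--min formula for each \emph{individual} invariant $\ddnu_k(\ddS)$ over $t$-primitive polynomials, and the ``minimum over all $(m+1)$-point selections'' statement you invoke is precisely the assertion you are trying to prove (it is, in the $\QQ[[t]]$ setting, Theorem \ref{thm:majorization} itself), so citing it this way is circular. To close the gap you need the chain the paper uses: (a) by your symmetric identity, permute the $m+1$ test points into a sequence that is a $t$-ordering of the \emph{finite set} $V$ they form; (b) apply the well-definedness result (Theorem \ref{thm:invariant}) to $V$ itself, identifying $E_m$ with $\sum_{k=1}^m\ddnu_k(V)$; and (c) --- the step entirely absent from your proposal --- invoke monotonicity under set inclusion, $\ddnu_k(V)\ge\ddnu_k(\ddS)$ for $V\subseteq\ddS$ (Theorem \ref{thm:subset-decreasing}, equivalently Proposition \ref{prop:4-prop}(3) in $b$-form), which is the place where the max--min characterization actually enters: enlarging the set can only decrease each inner minimum, hence the max--min. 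Without (c) there is no mechanism for comparing the invariants of the finite configuration of test points with those of the ambient set $\SSS$, and the residue-tree/convexity heuristic does not substitute for it --- it is exactly the argument you (rightly) declined to make rigorous. With (a)--(c) inserted, your proof is complete and coincides with the paper's.
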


The max-min characterization   implies monotonicity  
 on the behavior  of  $b$-exponent sequences for sets $\SSS$, 
 under set inclusion, as the set $\SSS$ varies,
 see  Proposition \ref{prop:factorial-ordering} (2).

%
%

\subsection{Generalized factorials from $b$-exponent sequences in $\ZZ$.}

We  use $b$-exponent sequences to  define generalized factorials depending
on two parameters: a subset  $\SSS$ of $\T$ and an allowed subset of ideals $\T\subseteq \NN$.
We state several results about such factorials.

%
%
\begin{definition}\label{definition:factorials-S-T}
 Let $S$ be a nonempty subset of $\ZZ$. Let $\T\subseteq \NN$.
 For $k=0,1,2,\dots$, the
 \emph{$k$-th generalized factorial  associated to $S$ and $\T$}, denoted $k!_{S,\T}$, 
 is  a nonnegative  integer  defined by
\begin{equation}\label{def:factorials-S-T}
k!_{S,\T}:=\prod_{b \in\T}b^{\ivr_k(S,b)}.
\end{equation}
We adopt  the convention $b^{+\infty}=0$ for  all $b \ge 2$ and  $b=0$, while $1^{+\infty}=1$. Also $b^0=1$ for all $b\in\NN$.
(This convention is natural if viewing $b$ as an ideal.)
\end{definition}

We note that:
\begin{enumerate}
\item[(1)] If $\T=\varnothing$, then the product on the right side of \eqref{def:factorials-S-T} is empty; so $k!_{S,\varnothing}=1$.
\item[(2)] If $\T\neq\varnothing$, then $k!_{\SSS,\T} \ge 1$
 if $k<\vert \SSS\vert$. 
 \end{enumerate}
 The cases $k=0$ and $k=1$ are exceptional.
 \begin{enumerate} 
\item[(3)]  
 $0!_{\SSS,\T}=1$, by convention, since the sum  \eqref{def:b-sequence-wrt-a} is empty.
(This definition is consistent with our conventions on $\ivr_0(\SSS, b)$ for $b=0,1$; see  Section \ref{subsec:proofs-53}.) 

\item[(4)] $1!_{\SSS,\T}=1$ holds 
for all $\sT$ whenever the difference set  $s_i-s_j$ ($s_i, s_j \in \SSS$) has greatest common divisor $1$. 
(It is  possible for $1!_{S,\T}>1$  to occur otherwise.) 
\end{enumerate}

In the special case $\SSS=\ZZ$ we can again compute factorials via 
simultaneous $b$-orderings for $\ZZ$, now allowing  all bases  $b \ge 2$. 

%
%
\begin{thm}[Simultaneous $b$-ordering]\label{thm:simultaneous-b-ordering}
The natural ordering $0,1,2,\dots$ of the nonnegative integers $\NN$ 
 forms a $b$-ordering of $S= \mathbb{Z}$ simultaneously for all integers $b \ge 2$.
\end{thm}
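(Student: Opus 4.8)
The plan is to verify the defining minimality condition \eqref{eqn:b-orderings} directly for the candidate sequence $a_i = i$, showing that at each stage the natural next element $i$ attains the minimum simultaneously for every base $b \ge 2$. Since this is precisely the generalization of Bhargava's Theorem \ref{thm:Bhargava-simultaneous-p-ordering} from a prime $p$ to an arbitrary base $b$, I expect the argument to follow the de Polignac--Legendre counting idea with $p^k$ replaced by $b^k$; the crucial point is that $\ordr_b(\cdot)$, although not an additive valuation for composite $b$, still decomposes as a count of power-divisibility events, so additivity is not needed.

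Concretely, I would fix $i \ge 1$ and assume $a_0, \dots, a_{i-1}$ have been chosen as $0, 1, \dots, i-1$. For $a' \in \ZZ$ set $f(a') := \sum_{j=0}^{i-1}\ordr_b(a'-j)$; the goal is to show $f(i) = \min_{a'\in\ZZ} f(a')$. The arguments $a'-j$, for $j = 0, \dots, i-1$, run over the block of $i$ consecutive integers $I_{a'} = \{a'-i+1, \dots, a'\}$, so $f(a') = \sum_{n \in I_{a'}} \ordr_b(n)$. If $0 \in I_{a'}$ then $f(a') = +\infty$ and the inequality $f(a') \ge f(i)$ is immediate; otherwise every $n \in I_{a'}$ is nonzero and I would use the identity $\ordr_b(n) = \sum_{k \ge 1}[\,b^k \mid n\,]$, valid for $n \ne 0$ and $b \ge 2$, the sum being finite since $b^k > |n|$ eventually.

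Interchanging the two finite summations gives $f(a') = \sum_{k\ge 1} N_k(a')$, where $N_k(a')$ is the number of multiples of $b^k$ lying in the length-$i$ block $I_{a'}$. The heart of the matter is the elementary floor inequality $\lfloor x+y\rfloor \ge \lfloor x\rfloor + \lfloor y\rfloor$, which yields that any block of $i$ consecutive integers contains at least $\lfloor i/b^k\rfloor$ multiples of $b^k$, that is, $N_k(a') \ge \lfloor i/b^k\rfloor$ for all $a'$ and all $k$, while for the specific block $I_i = \{1, \dots, i\}$ the count equals exactly $\lfloor i/b^k\rfloor$. Summing over $k$ then gives $f(a') \ge \sum_{k\ge 1}\lfloor i/b^k\rfloor = f(i)$, with equality at $a' = i$; reindexing by $m = i-j$ confirms $f(i) = \sum_{m=1}^{i}\ordr_b(m)$. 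This establishes \eqref{eqn:b-orderings} at step $i$, and since the bound holds for every $b \ge 2$ at once, the natural ordering is a $b$-ordering for all $b$ simultaneously.

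I do not anticipate a serious obstacle here. The only points needing care are that $\ordr_b$ fails to be an additive valuation for composite $b$ (sidestepped by working with the additive count-of-divisibilities representation rather than with the product of differences), the degenerate case $0 \in I_{a'}$, and the observation that the definition of a $b$-ordering only requires that $a_i = i$ \emph{attain} the minimum, not that it be the unique minimizer, so the presence of other minimizing blocks causes no difficulty.
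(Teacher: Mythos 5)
Your proof is correct and follows essentially the same route as the paper's: both decompose $\ordr_b(n)$ as a count of divisibility events $\sum_{k\ge 1}[\,b^k \mid n\,]$, interchange the two summations, and bound the resulting count below by $\left\lfloor i/b^k\right\rfloor$ with equality at the natural choice $a'=i$ (your count of multiples of $b^k$ in the block $\{a'-i+1,\dots,a'\}$ is the same quantity as the paper's count of $j\in\{0,\dots,i-1\}$ with $j\equiv a' \pmod{b^k}$). Your explicit handling of the degenerate case $0\in I_{a'}$ is a small point of extra care, but the argument is otherwise the paper's proof in different notation.
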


This result is proved as Theorem \ref{thm:simultaneous-b-ordering1}.\\

We have  now an extra degree of freedom to vary $\T \subseteq \NN$.

%
%
\begin{example}
The choice  $\T=\PP$ 
recovers  Bhargava's generalized factorials for all $ \SSS \subseteq \ZZ$ see \cite{Bhar:00}, i.e.
$$
k!_{\SSS,\PP}=k!_{S}.
$$
For $\SSS=\ZZ$ this formula  recovers  the usual factorial function $k!_{\ZZ, \PP} = k!_{\ZZ}= k!$.
The factorization  formula \eqref{def:factorials-S-T} then encodes de Polignac's formula for prime  factorization of $k!$. 
\end{example}

For the special case $\SSS=\PP$, the set of  primes,  we obtain 
a generalization of Theorem \ref{thm:Bhargava-prime}.

%
%
\begin{thm}\label{thm:B-prime0}
For the set $S= \PP$ of all primes,  and for all $\sT \subseteq \NN\backslash\{0\}$,
\begin{equation}
k!_{\PP, \sT} = \prod_{\substack{b \in \sT\\\varphi(b)+\omega(b)\le k}}
b^{\left\lfloor \frac{k-\omega(b)}{\varphi(b)}\right\rfloor +\left\lfloor \frac{k-\omega(b)}{b \varphi(b)}\right\rfloor + \left\lfloor \frac{k-\omega(b)}{b^2 \varphi(b)}\right\rfloor+\cdots},
\end{equation}
where $\varphi(b)$ is the Euler totient function and  $\omega(b)$ denotes the number of distinct prime divisors of $b$.
\end{thm}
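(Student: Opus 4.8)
The plan is to reduce to a single base and compute the invariant $\ivr_k(\PP,b)$ for each $b\in\sT$, after which Definition \ref{definition:factorials-S-T} gives $k!_{\PP,\sT}=\prod_{b\in\sT}b^{\ivr_k(\PP,b)}$, while Theorem \ref{thm:well-definedness} guarantees that each $\ivr_k(\PP,b)$ may be computed from any single $b$-ordering of $\PP$. A base $b$ then contributes a nontrivial factor precisely when $\ivr_k(\PP,b)\ge1$, and the computation below shows this happens exactly when $\varphi(b)+\omega(b)\le k$; this identifies the index set of the product. The base $b=1$, if it lies in $\sT$, contributes the factor $1$ to both sides and may be set aside, so fix an integer $b\ge2$.

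The number-theoretic input is the distribution of $\PP$ modulo powers of $b$. By Dirichlet's theorem on primes in arithmetic progressions, a residue class modulo $b$ contains infinitely many primes if and only if it is a unit class, giving $\varphi(b)$ ``fat'' classes; the only remaining classes meeting $\PP$ are the $\omega(b)$ classes $p\bmod b$ with $p\mid b$, each containing the single prime $p$. Refining, a unit class modulo $b^m$ splits into exactly $b$ classes modulo $b^{m+1}$, all of them unit classes, each again containing infinitely many primes by Dirichlet. Hence the primes coprime to $b$ organize into a tree with $\varphi(b^m)=b^{m-1}\varphi(b)$ nodes at level $m$, every node populated by infinitely many primes, while the $\omega(b)$ primes dividing $b$ occupy singleton classes.

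I would then exhibit an explicit $b$-ordering $\mathbf a=(a_i)$ of $\PP$: first list the $\omega(b)$ primes dividing $b$ in any order, and then list the remaining primes so that the numbers of chosen elements in the residue classes modulo $b,b^2,b^3,\dots$ stay as evenly balanced as possible at every stage, Dirichlet's theorem supplying a prime in whichever least-loaded node is required. The main obstacle is to verify that this balanced sequence genuinely attains the greedy minimum \eqref{eqn:b-orderings} at each step. Writing $\ordr_b(a_i-a_j)=\#\{m\ge1:a_i\equiv a_j \pmod{b^m}\}$, the cost at step $i$ becomes $\sum_{m\ge1}\#\{j<i:a_i\equiv a_j \pmod{b^m}\}$, so minimizing it amounts to routing $a_i$ through a node that is least loaded simultaneously at every level. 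Encoding each coprime prime by a mixed-radix address (first digit in $\{0,\dots,\varphi(b)-1\}$ selecting its unit class modulo $b$, later digits in $\{0,\dots,b-1\}$ selecting the successive refinements) reduces this to exactly the balanced-filling optimality already used for $\SSS=\ZZ$ in Theorem \ref{thm:simultaneous-b-ordering1}; that argument shows a simultaneously least-loaded path always exists and that filling addresses in increasing order is a $b$-ordering.

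Finally I would read off the invariant. Each prime dividing $b$ is alone in its class modulo $b$, so it contributes $0$ to its own cost and to every later cost; the first $\omega(b)$ steps therefore cost $0$ and merely shift the effective index. For $k\ge\omega(b)$, set $n=k-\omega(b)$ for the number of previously chosen coprime primes; balanced filling places exactly $\lfloor n/\varphi(b^m)\rfloor$ of them in the level-$m$ ancestor of $a_k$, whence
\[
\ivr_k(\PP,b)=\sum_{m\ge1}\left\lfloor\frac{k-\omega(b)}{\varphi(b^m)}\right\rfloor=\sum_{m\ge1}\left\lfloor\frac{k-\omega(b)}{b^{m-1}\varphi(b)}\right\rfloor,
\]
using $\varphi(b^m)=b^{m-1}\varphi(b)$. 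This exponent is positive exactly when $k-\omega(b)\ge\varphi(b)$, i.e.\ when $\varphi(b)+\omega(b)\le k$, and substituting it into $k!_{\PP,\sT}=\prod_{b\in\sT}b^{\ivr_k(\PP,b)}$ produces the stated formula.
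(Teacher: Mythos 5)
Your strategy---construct an explicit balanced sequence, verify greedily that it is a $b$-ordering, read off $\ivr_k(\PP,b)$, and finish with Theorem \ref{thm:well-definedness}---is viable, and in its key step it differs from the paper, which instead proves a lower bound valid for \emph{all} $\PP$-test sequences (Proposition \ref{prop:84}, via the convexity Lemma \ref{lem:well-distributed} and a count of congruent pairs) and then shows by induction on partial sums that its constructed sequence is a $b$-ordering. However, your proof has a genuine gap at exactly the point you flag as ``the main obstacle'': the claim that greedy optimality of balanced filling ``reduces to exactly'' the argument of Theorem \ref{thm:simultaneous-b-ordering1}. In the case $\SSS=\ZZ$ the greedy witness at step $k$ is the integer $k$ itself, which realizes its address \emph{exactly}: its residue modulo $b^i$ is prescribed for every $i$ simultaneously. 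A prime cannot do this: each chosen prime carries infinitely many uncontrolled higher digits, so ``filling addresses in increasing order'' is not implementable in $\PP$, and the very first coprime prime chosen already occupies some deep address other than $(0,0,0,\dots)$. Concretely, take $b=10$ and begin $2,5,11,3,7,19$. Your scheme assigns $11$ the address $(0,0,\dots)$, but in fact $11\equiv 11\pmod{100}$, i.e., $11$ sits at address $(0,1)$. If the next prime $\equiv 1\pmod{10}$ is then chosen to be $211$, which is the prime realizing the mixed-radix address $(0,1)$ of the integer $4$, its cost is $\ordr_{10}(211-11)=2$, whereas the prime $31$ has cost $\ordr_{10}(31-11)=1$; the resulting sequence is not a $10$-ordering and its exponent exceeds the claimed formula. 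So the transfer of optimality from $\ZZ$ to $\PP$ is not automatic; it is precisely where the work lies.

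What is missing is an inductive invariant together with an existence proof. One must keep the loads of the coprime classes modulo $b^m$ balanced at \emph{every} level $m$ at every stage---in particular, at ``inactive'' levels, where $\varphi(b^m)$ exceeds the number of coprime primes chosen so far, balance means no two chosen primes may agree modulo $b^m$---and one must prove that at each step there is a class that is least-loaded at every active level and \emph{empty} at the first inactive level (Dirichlet then supplies a fresh prime in it, and emptiness at that level rules out all deeper collisions). This existence does hold, by a descent through the tree: if a least-loaded class modulo $b^m$ had all $b$ of its children at the ceiling load, its own load would exceed the level-$m$ floor, a contradiction; and at the first inactive level some child has load $0$ because the parent's load is less than $b$. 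But none of this appears in the proof of Theorem \ref{thm:simultaneous-b-ordering1}, so it cannot be cited for it. The paper sidesteps the issue entirely: its Step 1 uses a \emph{finite} depth-$e$ construction (one prime in each coprime class modulo $b^e$), for which all addresses are controlled to depth $e$ and pairwise distinct, and its Step 2 derives the $b$-ordering property not from a per-step balance argument but from the global bound of Proposition \ref{prop:84}, finishing by well-definedness and letting $e\to\infty$. With the balance invariant and the descent argument supplied---or by switching to the paper's finite-depth construction---the rest of your argument (the computation of the exponent, the identification of the index set $\varphi(b)+\omega(b)\le k$, and the disposal of $b=1$) is correct.
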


The product over $\sT$ is finite, since $\varphi(b) \to \infty$ as $b \to \infty.$ The case $\sT = \PP$ corresponds
to Bhargava's result.  Theorem \ref{thm:B-prime0} is proved in Section \ref{sec:7a}.

The next result shows that generalized factorials  satisfy divisibility relations
 when varying either  parameter  $S$ or $\T$ by inclusion. The divisibility relation varying $S$ seems 
 more subtle than that of varying $\sT$. We do not know of a proof based on the direct definition of $b$-ordering;
 instead the proof of Proposition \ref{prop:factorial-ordering} (2) appeals to Proposition \ref{prop:4-prop}, which in turn
 follows using Theorem \ref{thm:subset-decreasing}, proved using the max-min characterization given in Theorem \ref{thm:max-min}. 
%
%
\begin{prop}[Generalized factorial inclusion and divisibility properties]\label{prop:factorial-ordering}

\quad\quad
\begin{enumerate}

\item[\emph{(1)}] Let $S$ be a fixed nonempty subset of  $\ZZ$ and let $\T_1\subseteq\T_2\subseteq \NN$.
Then for integers $0\le k<\vert S\vert$,
$$
k!_{S,\T_1}\quad\text{divides}\quad k!_{S,\T_2}.
$$

\item[\emph{(2)}] Let $S_1\subseteq S_2$ be nonempty subsets of  $\ZZ$ and  let $\T\subseteq \NN$
 be fixed. Then for integers $0\le k<\left\vert S_1\right\vert$,
$$
k!_{S_2,\T}\quad\text{divides}\quad k!_{S_1,\T}.
$$

\end{enumerate}
\end{prop}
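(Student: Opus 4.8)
The plan is to argue directly from the multiplicative definition \eqref{def:factorials-S-T}, reducing both divisibility statements to base-by-base comparisons of the exponent invariants $\ivr_k(S,b)$. First I would record the finiteness needed to avoid degenerate factorials: for $0 \le k < |S|$ and any $b \ge 2$, a $b$-ordering of $S$ has its first $k+1$ entries distinct, since a repeated entry forces an infinite summand $\ordr_b(0)$ that the greedy step \eqref{eqn:b-orderings} avoids whenever $|S| > k$ still leaves a fresh element available. Hence $\ivr_k(S,b) < +\infty$ and $b^{\ivr_k(S,b)}$ is a positive integer, while the extreme bases $b \in \{0,1\}$ contribute factors equal to $1$ under the conventions of Definition \ref{definition:factorials-S-T}. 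Consequently every factorial $k!_{S,\T}$ in the statement is a positive integer for $0 \le k < |S|$, so divisibility may be tested without zero-denominator issues.

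For part (1), since $\T_1 \subseteq \T_2$ the defining product splits as
\begin{equation*}
k!_{S,\T_2} = \Big(\prod_{b \in \T_1} b^{\ivr_k(S,b)}\Big)\cdot \prod_{b \in \T_2 \setminus \T_1} b^{\ivr_k(S,b)} = k!_{S,\T_1}\cdot \prod_{b \in \T_2 \setminus \T_1} b^{\ivr_k(S,b)},
\end{equation*}
and by the finiteness observation the trailing factor is a positive integer. This gives $k!_{S,\T_1} \mid k!_{S,\T_2}$ at once; this half requires nothing beyond the definition.

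For part (2) I would reduce the divisibility $k!_{S_2,\T} \mid k!_{S_1,\T}$ to the \emph{term-by-term} monotonicity
\begin{equation*}
\ivr_k(S_2,b) \le \ivr_k(S_1,b) \qquad \text{for every } b \in \T \text{ and } 0 \le k < |S_1|,
\end{equation*}
which should hold because $S_1 \subseteq S_2$. Granting this, for each base one has $b^{\ivr_k(S_2,b)} \mid b^{\ivr_k(S_1,b)}$, and multiplying these divisibilities over $b \in \T$ yields $\prod_b b^{\ivr_k(S_2,b)} \mid \prod_b b^{\ivr_k(S_1,b)}$, i.e.\ $k!_{S_2,\T} \mid k!_{S_1,\T}$. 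Thus the entire content lies in the monotonicity claim, which is exactly Theorem \ref{thm:subset-decreasing} (equivalently Proposition \ref{prop:4-prop}).

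The hard part, and the reason a direct argument from the greedy Definition \ref{def:b-orderings} seems unavailable, is precisely this term-by-term monotonicity. Feeding an optimal $b$-ordering of the smaller set $S_1$ into the weak majorization inequality \eqref{eqn:majorization-1} for $S_2$ only yields domination of the \emph{partial sums}, $\sum_{j \le m}\ivr_j(S_1,b) \ge \sum_{j \le m}\ivr_j(S_2,b)$, which is strictly weaker than what the factorial divisibility needs at a single index $k$. To upgrade to the individual inequalities I would pass through the congruence-preserving injection $\varphi_b \colon \ZZ \to \QQ[[t]]$, which carries $b$-orderings of any $S \subseteq \ZZ$ to $t$-orderings of $\varphi_b(S)$ while preserving invariants, reducing everything to the Dedekind-domain setting of the ideal $(t) \subseteq \QQ[[t]]$. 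There the max-min characterization of Theorem \ref{thm:max-min} expresses $\ivr_k$ by a formula manifestly independent of any chosen ordering, in which enlarging the underlying set only enlarges the family over which the inner minimum ranges, so the invariant decreases and $\ivr_k(\varphi_b(S_2),t) \le \ivr_k(\varphi_b(S_1),t)$ follows. I expect the delicate point to be confirming that the max-min quantity is genuinely monotone in the correct direction under set inclusion, which is the crux settled by Theorem \ref{thm:max-min}.
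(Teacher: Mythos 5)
Your proposal is correct and follows essentially the same route as the paper: part (1) by splitting the defining product as $k!_{S,\T_2}=k!_{S,\T_2\setminus\T_1}\,k!_{S,\T_1}$, and part (2) by reducing to the term-by-term monotonicity $\ivr_k(S_1,b)\ge\ivr_k(S_2,b)$ of Proposition \ref{prop:4-prop}~(3), obtained via the congruence-preserving maps $\varphi_b$ and the max-min characterization of Theorem \ref{thm:max-min}. Your side remarks (the finiteness of $\ivr_k(S,b)$ for $k<|S|$, and the observation that weak majorization alone is too weak for part (2)) accurately reflect the paper's own discussion.
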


We prove this result
 in Section \ref{subsec:61}. The case (2) of Proposition \ref{prop:factorial-ordering} above
 includes Lemma 13 of \cite{Bhar:00} as the special case $\sT= \PP$.

Of particular interest is the special case $\SSS=\ZZ$, and $\sT= \NN$
which gives,
according to (1) of Proposition \ref{prop:factorial-ordering} above,    the generalized factorial 
which has the minimal size for each $k$ over all possible $\SSS$, holding $\T= \NN$
fixed. It is
$$
k!_{\ZZ, \NN}= \prod_{b =2}^\infty b^{\ivr_k( \ZZ, b)}.
$$
We give formulas for  these generalized factorials  in Section \ref{subsec:72}, and give tables 
of their values in Appendix \ref{appendix:tables}.

%
%

\subsection{Generalized-integers  and generalized binomial coefficients }\label{subsec:34}

%
%
\begin{definition}\label{definition:integers-S-T}
Let $S$ be a nonempty subset of  $\ZZ$.
Let $\T \subseteq \NN$.
 For positive integers $n<\vert S\vert$, 
the \emph{$n$th  generalized-integer  ideal associated to $S$ and $\T$}, denoted $[n]_{S,\T}$, is 
the rational number  defined by
\begin{equation}\label{def:nST}
[n]_{S,\T}:=\frac{n!_{S,\T}}{(n-1)!_{S,\T}}.
\end{equation}
We set $[n]_{S,\T}:=0$ for $n \ge \vert S \vert$ (excluding $\T = \{1\}$).
\end{definition}
Since $0!_{S, \T} = 1$, we have $ [1]_{S,\T}= 1!_{S,\T}$.
 The definition \eqref{def:nST} then  yields 
\begin{equation}
n!_{S,\T} = \prod_{j=1}^n [j]_{S,\T}
\end{equation} 
by induction on $n$.

This  definition \eqref{def:nST}  shows that  $[n]_{S, \T}$  is  a  nonnegative rational number.
The next result asserts that $[n]_{S, \T}$  is an integer,
justifying its name ``generalized-integer".


\begin{thm}\label{thm:integer-integer}
Let $S$ be a  nonempty subset of 
$\ZZ$. 
Let $\T\subseteq \NN$.
Then for positive integers $n<\vert S\vert$, the generalized-integer
  $[n]_{S,\T}$ 
is a positive  integer.
\end{thm}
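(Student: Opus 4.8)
The plan is to reduce integrality to a monotonicity property of the $b$-exponent sequences. By \eqref{def:factorials-S-T} and \eqref{def:nST}, and granting that $n!_{\SSS,\T}$ and $(n-1)!_{\SSS,\T}$ are finite positive integers (addressed below), the quotient rearranges term by term as
\begin{equation*}
[n]_{\SSS,\T}=\frac{n!_{\SSS,\T}}{(n-1)!_{\SSS,\T}}=\prod_{b\in\T}b^{\,\ivr_n(\SSS,b)-\ivr_{n-1}(\SSS,b)}.
\end{equation*}
Thus it suffices to prove two things: (a) for every $b\in\T$ one has $\ivr_{n-1}(\SSS,b)\le\ivr_n(\SSS,b)$; and (b) $\ivr_n(\SSS,b)-\ivr_{n-1}(\SSS,b)=0$ for all but finitely many $b\in\T$. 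Granting (a), each exponent is a nonnegative integer, so each factor $b^{\ivr_n-\ivr_{n-1}}$ is a positive integer; granting (b), the product is finite. Hence $[n]_{\SSS,\T}$ is a finite product of positive integers, and in particular a positive integer.

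The crux is the monotonicity (a), which I would deduce directly from the greedy defining property of $b$-orderings. Fix a $b$-ordering $\mathbf{a}=(a_i)_{i=0}^\infty$ of $\SSS$; by Theorem \ref{thm:well-definedness} we may compute all the invariants from it, $\ivr_k(\SSS,b)=\ivr_k(\SSS,b,\mathbf{a})$. The defining minimality \eqref{eqn:b-orderings} of the choice $a_{n-1}$ gives
\begin{equation*}
\ivr_{n-1}(\SSS,b)=\sum_{j=0}^{n-2}\ordr_b(a_{n-1}-a_j)=\min_{a'\in\SSS}\sum_{j=0}^{n-2}\ordr_b(a'-a_j)\le\sum_{j=0}^{n-2}\ordr_b(a_n-a_j),
\end{equation*}
where the last inequality specializes the minimum to $a'=a_n$. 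Splitting off the top term $j=n-1$ then yields
\begin{equation*}
\ivr_n(\SSS,b)=\ordr_b(a_n-a_{n-1})+\sum_{j=0}^{n-2}\ordr_b(a_n-a_j)\ge\ordr_b(a_n-a_{n-1})+\ivr_{n-1}(\SSS,b)\ge\ivr_{n-1}(\SSS,b),
\end{equation*}
since $\ordr_b(\cdot)\ge0$. This proves (a) for all $b\ge2$, and the identical argument applies in the extreme cases $b\in\{0,1\}$ under the conventions of Section \ref{subsec:proofs-53}.

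For the finiteness (b) I would show $\ivr_l(\SSS,b)=0$ for every fixed index $l<|\SSS|$ once $b$ is sufficiently large. Choosing $l+1$ distinct elements of $\SSS$ as a test sequence $\mathbf{s}$, every nonzero difference among them fails to be divisible by $b$ as soon as $b$ exceeds the largest such difference in absolute value, so $\ivr_m(\SSS,b,\mathbf{s})=0$ for each $m$; the weak majorization \eqref{eqn:majorization-1} of Theorem \ref{thm:majorization-b} then forces the invariants $\ivr_m(\SSS,b)$ to vanish in this range (and when $\SSS$ is finite one may argue directly from a $b$-ordering, whose first $|\SSS|$ terms are distinct). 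The same observation shows $n!_{\SSS,\T}$ is a finite positive integer, justifying the rearrangement in the first paragraph. I expect the only real care to lie in the two extreme bases rather than in the monotonicity: for $b=1$ the convention $1^{+\infty}=1$ makes the factor $1$ irrespective of the exponents, while for $b=0$ one checks $\ivr_k(\SSS,0)=0$ for all $k<|\SSS|$ (a $0$-ordering can always select a fresh element), so the factor is $0^0=1$. Thus both extreme bases contribute $1$ and never endanger positivity, and the product evaluates to a genuine positive integer.
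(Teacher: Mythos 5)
Your proof is correct, and its overall skeleton coincides with the paper's: both reduce integrality to the product formula $[n]_{\SSS,\T}=\prod_{b\in\T}b^{\ivr_n(\SSS,b)-\ivr_{n-1}(\SSS,b)}$ (Lemma \ref{lem:61}) together with monotonicity of the exponents. The difference lies in how the monotonicity $\ivr_{n-1}(\SSS,b)\le\ivr_n(\SSS,b)$ is obtained. The paper imports it from the power-series setting: Proposition \ref{prop:4-prop}\,(1) applies Lemma \ref{lem:t-sequence-increasing} to the image $\varphi_b(\SSS)\subseteq\QQ[[t]]$ under the congruence-preserving map of Proposition \ref{prop:varphi_B-exists-for-Z} and then transfers back via \eqref{eqn:equal-inv}. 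You instead run the greedy two-line argument directly on a $b$-ordering of $\SSS$ (minimality of $a_{n-1}$, then split off the term $\ordr_b(a_n-a_{n-1})\ge 0$); this is literally the same computation as the paper's proof of Lemma \ref{lem:t-sequence-increasing}, but performed in $\ZZ$, so it bypasses the $\varphi_b$ machinery entirely once Theorem \ref{thm:well-definedness} is granted. This buys a more elementary, self-contained proof of this particular theorem, though it would not extend to Theorem \ref{thm:binomial-integer}, whose superadditivity input (Proposition \ref{prop:4-prop}\,(2)) the paper only knows how to prove via the max-min characterization in $\QQ[[t]]$. A second genuine difference is that you explicitly verify what the paper leaves implicit: that for $n<|\SSS|$ each $\ivr_n(\SSS,b)$ is finite and vanishes for all but finitely many $b$, so the product is a finite product of positive integers. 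Your majorization argument for this is fine, with one small index caveat: Theorem \ref{thm:majorization-b}\,(1) is stated for $1\le m<n$ with a test sequence $(s_i)_{i=0}^n$, so to kill $\ivr_l(\SSS,b)$ itself you need $l+2$ distinct elements of $\SSS$ (available when $\SSS$ is infinite), and in the remaining case of finite $\SSS$ your parenthetical fallback---a $b$-ordering has distinct initial terms, all differences bounded by the diameter of $\SSS$ independently of $b$---closes the gap.
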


We prove Theorem \ref{thm:integer-integer} in Subsection \ref{subsec:61}. 

%
%
\begin{definition}\label{definition:binomial-S-T}
Let $S$ be a nonempty subset of $\ZZ$. Let $\T\subseteq \NN$.
For integers $0\le\ell\le k<\vert S\vert$, the \emph{generalized binomial coefficient  associated to $S$ and $\T$}, 
denoted $\binom{k}{\ell}_{S,\T}$, is defined by
\begin{equation}
\binom{k}{\ell}_{S,\T}:=\frac{k!_{S,\T}}{\ell!_{S,\T}(k-\ell)!_{S,\T}}.
\end{equation}
\end{definition}

By definition the set $\binom{k}{\ell}_{S,\T}$ is  a positive rational number, or $+\infty$. 

%
%
\begin{thm}\label{thm:binomial-integer}
Let $S$ be a nonempty subset of $\ZZ$ . Let $\T\subseteq \NN$.
Then for integers $0\le\ell\le k<\vert S\vert$, the generalized binomial coefficient $\binom{k}{\ell}_{S,\T}$ is a positive integer.
\end{thm}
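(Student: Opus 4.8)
The plan is to reduce the assertion to the nonnegativity, for each individual base $b \in \T$, of the exponent that appears in the product expansion of the binomial coefficient. Writing out the definitions of $k!_{S,\T}$, $\ell!_{S,\T}$ and $(k-\ell)!_{S,\T}$ from Definition \ref{definition:factorials-S-T} and cancelling, one obtains
$$\binom{k}{\ell}_{S,\T} = \prod_{b \in \T} b^{\,\ivr_k(S,b) - \ivr_\ell(S,b) - \ivr_{k-\ell}(S,b)}.$$
Since $0 \le \ell \le k < |S|$ guarantees the existence of $k+1$ distinct elements of $S$, all three exponent-sequence values are finite in this range. For $b=1$ every factor equals $1$, and for $b=0$ one checks (consistent with note (2) after Definition \ref{definition:factorials-S-T}) that $\ivr_k(S,0)=0$ when $k<|S|$, so these extreme bases contribute trivially. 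Thus it suffices to prove, for every integer $b \ge 2$, the superadditivity inequality
$$\ivr_k(S,b) \ge \ivr_\ell(S,b) + \ivr_{k-\ell}(S,b), \qquad 0 \le \ell \le k < |S|.$$
Granting this, every factor has the form $b^{e}$ with $e \ge 0$ and $b \ge 1$, so the product is a positive integer.

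To establish the superadditivity for a fixed $b \ge 2$, I would transport the problem into Bhargava's classical single-prime theory via the congruence-preserving map $\varphi_b : \ZZ \to \QQ[[t]]$ of Section \ref{sec:5}. This map is injective and satisfies $\ordr_b(m-n) = \dnu(\varphi_b(m) - \varphi_b(n))$ for all $m,n \in \ZZ$, where $\dnu$ is the valuation attached to the prime $\pp = t\QQ[[t]]$ in the discrete valuation ring $\QQ[[t]]$. Setting $U = \varphi_b(S)$, the defining sums \eqref{def:b-sequence-wrt-a} match term by term, so the $b$-ordering invariants of $S$ coincide with the $\pp$-exponent sequence invariants of $U$, namely $\ivr_i(S,b) = \ivr_i(U,\pp)$ for all $i$; these are well defined by Theorem \ref{thm:invariant}. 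The desired inequality then becomes $\ivr_k(U,\pp) \ge \ivr_\ell(U,\pp) + \ivr_{k-\ell}(U,\pp)$, i.e. superadditivity of the $\pp$-exponent sequence of a subset of the Dedekind domain $\QQ[[t]]$ for the single prime $\pp$.

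This last inequality is precisely the single-prime content of Bhargava's factorial-divisibility theorem (Theorem \ref{thm:bhar4}), which he proved for arbitrary Dedekind domains and which therefore applies verbatim to $U \subseteq \QQ[[t]]$; alternatively it can be read off from the max-min characterization of Theorem \ref{thm:max-min}, since $\dnu$ is an additive valuation. To keep the $\QQ[[t]]$ theory self-contained I would give the derivation here using the combinatorial identity $\dnu\big(\prod_{i<j}(u_i-u_j)\big) = \sum_{j \ge 1}\sum_{c} \binom{m_{c,j}}{2}$, where $m_{c,j}$ counts how many of the chosen elements lie in the residue class $c$ modulo $\pp^{j}$, combined with the equality case of the weak majorization result Theorem \ref{thm:majorization-b}(2), which identifies $\sum_{i=0}^{m}\ivr_i(U,\pp)$ with the minimum of this quantity over all $(m+1)$-element subsets.

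The main obstacle is exactly this superadditivity step in the valuation setting. Unlike well-definedness and weak majorization, which concern partial sums, superadditivity is a statement about the individual terms $\ivr_k$ and does \emph{not} follow from convexity of the partial-sum function alone. The crux is to show that the balanced (greedy) distribution realizing the minimum for $k+1$ elements dominates, term by term, the superposition of the optimal distributions for $\ell+1$ and $(k-\ell)+1$ elements; this tree-balancing comparison is where Bhargava's argument, equivalently the combinatorics underlying Theorem \ref{thm:max-min}, does the real work. Everything else—the product expansion, the finiteness in the range $k<|S|$, the trivial treatment of $b \in \{0,1\}$, and the reduction through $\varphi_b$—is routine.
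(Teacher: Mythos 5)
Your skeleton is the same as the paper's: the product expansion $\binom{k}{\ell}_{S,\T}=\prod_{b\in\T} b^{\ivr_k(S,b)-\ivr_\ell(S,b)-\ivr_{k-\ell}(S,b)}$ (the paper's Lemma \ref{lem:62}), the trivial handling of $b\in\{0,1\}$, and the transport of per-base superadditivity through $\varphi_b$ to the $t$-exponent sequence of $U=\varphi_b(S)\subseteq\QQ[[t]]$ (the paper's Proposition \ref{prop:4-prop}(2)). The genuine gap is in the one step you yourself flag as ``the main obstacle'': the superadditivity $\ddnu_k(U)\ge\ddnu_\ell(U)+\ddnu_{k-\ell}(U)$ is never actually proved in your proposal. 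Your primary citation does not work: Theorem \ref{thm:bhar4} as stated in this paper concerns subsets of $\ZZ$ and products over all primes, so it cannot be applied ``verbatim'' to $U\subseteq\QQ[[t]]$; the Dedekind-domain statement you have in mind is in \cite{Bhar:97a}, and invoking it would make the argument correct but would defeat the paper's goal of proving this internally. Your fallback self-contained sketch is also incomplete for the reason you concede: Theorem \ref{thm:majorization-b}(2) and the counting identity control \emph{partial sums} $\sum_{i\le m}\ivr_i$, and superadditivity of individual terms does not follow from statements about partial sums.

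What you missed is that the ``alternatively it can be read off from Theorem \ref{thm:max-min}'' remark, which you mention only in passing, is the whole proof, and it is three lines (this is exactly the paper's Theorem \ref{thm:binomial}). Let $\mathbf{f}$ be a $t$-ordering of $U$ and $q_j(x;t)=\prod_{i<j}\left(x-f_i(t)\right)$. By \eqref{eqn:equality-qk} and additivity of $\dnu$,
\begin{equation*}
\ddnu_\ell(U)+\ddnu_{k-\ell}(U)=\min_{f\in U}\dnu\bigl(q_\ell(f;t)\bigr)+\min_{f\in U}\dnu\bigl(q_{k-\ell}(f;t)\bigr)\le\min_{f\in U}\dnu\bigl(q_\ell(f;t)\,q_{k-\ell}(f;t)\bigr),
\end{equation*}
and since $q_\ell q_{k-\ell}$ is monic of degree $k$ in $x$, hence $t$-primitive, the right side is bounded above by the max-min expression \eqref{eqn:nu=maxmin}, which equals $\ddnu_k(U)$. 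No ``tree-balancing'' comparison of optimal residue-class distributions is needed; bypassing exactly that combinatorics is what the max-min characterization is for. With this substituted for your citation, your proof is complete and coincides with the paper's.
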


We  prove Theorem \ref{thm:binomial-integer} in Subsection \ref{subsec:61}. \\

A final  result is a generalization of 
Theorem \ref{thm:bhar10}.

%
%
\begin{thm}\label{thm:bhar10revisited}
Let $\SSS \subseteq \ZZ$ be given, and let  $\mathbf{a}= (a_i)_{i=0}^{n}$ be any $n+1$ integers in $\SSS$. Then for
any $\sT \subseteq \NN$
 the product
\begin{equation*} 
\prod_{b \in \sT}b^{ \gamma(\SSS, b, \mathbf{a})}
\end{equation*} 
with
\[
\gamma(\SSS, b, \mathbf{a}) := \sum_{0 \le i < j\le n} \ordr_b(a_i-a_j)
\]
is a multiple of $0!_{\SSS, \T} 1!_{\SSS,\T} \cdots n!_{\SSS,\T}$. 
\end{thm}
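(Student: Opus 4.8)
The plan is to reduce the integer divisibility to a single numerical inequality for each base $b \in \sT$, and then to recognize that inequality as the top-index case of the weak majorization in Theorem \ref{thm:majorization-b}. Writing $E_b := \sum_{k=0}^n \ivr_k(\SSS, b)$, the conventions of Definition \ref{definition:factorials-S-T} give
$$0!_{\SSS,\sT}\,1!_{\SSS,\sT}\cdots n!_{\SSS,\sT} = \prod_{k=0}^n \prod_{b \in \sT} b^{\ivr_k(\SSS,b)} = \prod_{b \in \sT} b^{E_b},$$
so it suffices to prove $\gamma(\SSS, b, \mathbf{a}) \ge E_b$ for every $b \in \sT$. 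Granting this, $\prod_{b \in \sT} b^{\gamma(\SSS,b,\mathbf{a})} = \big(\prod_{b\in\sT} b^{E_b}\big)\prod_{b\in\sT} b^{\gamma(\SSS,b,\mathbf{a})-E_b}$, and the second factor is a product of nonnegative integer powers, hence an integer, which yields the asserted multiple relation. If two of the $a_i$ coincide, or if $n\ge|\SSS|$, then some difference vanishes, $\gamma(\SSS,b,\mathbf{a})=+\infty$, and the convention $b^{+\infty}=0$ (for $b\ge2$) makes the left-hand product $0$, which is trivially a multiple; so I may assume the $a_i$ are distinct and $|\SSS|\ge n+1$.

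The crucial step is an identity recasting the symmetric double sum $\gamma$ as the total of the $b$-exponent sequence of the test sequence $\mathbf{a}$. Since $\ordr_b(-x)=\ordr_b(x)$, reindexing the pairs $0\le i<j\le n$ by their larger index yields
$$\gamma(\SSS, b, \mathbf{a}) = \sum_{0\le i<j\le n}\ordr_b(a_i-a_j) = \sum_{k=1}^n\sum_{j=0}^{k-1}\ordr_b(a_k-a_j) = \sum_{k=1}^n \ivr_k(\SSS,b,\mathbf{a}).$$
This is exactly what makes the argument succeed for composite $b$: although $\ordr_b$ is not an additive valuation, so Bhargava's multiplicative Theorem \ref{thm:bhar10} does not apply directly, the quantity $\gamma$ is nonetheless the sum of the additively defined exponents $\ivr_k(\SSS,b,\mathbf{a})$ that Theorem \ref{thm:majorization-b} controls.

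It then remains to prove $\sum_{k=1}^n\ivr_k(\SSS,b,\mathbf{a})\ge\sum_{k=1}^n\ivr_k(\SSS,b)$, which, since $\ivr_0=0$, is the weak majorization inequality of Theorem \ref{thm:majorization-b}(1) at the top index $m=n$. As stated, that theorem records the inequality only for $1\le m<n$, so to capture the endpoint I would apply it instead to the extended $\SSS$-test sequence $\mathbf{a}'=(a_0,\dots,a_n,a_0)$ of length $n+2$: appending the repeated element $a_0$ does not change $\ivr_k(\SSS,b,\mathbf{a}')=\ivr_k(\SSS,b,\mathbf{a})$ for $0\le k\le n$, while Theorem \ref{thm:majorization-b}(1) applied to $\mathbf{a}'$ (legitimate since $|\SSS|\ge n+1$) now gives the inequality for all $1\le m\le n$, in particular $m=n$. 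Multiplying the resulting per-$b$ inequalities as in the first paragraph finishes the proof, with the extreme bases $b\in\{0,1\}$ handled identically via the conventions of Section \ref{subsec:proofs-53}. The only genuine point to watch is this endpoint of the majorization—equivalently, the statement that a $b$-ordering minimizes the total $\sum_{0\le i<j\le n}\ordr_b(a_i-a_j)$ over all $(n+1)$-element selections from $\SSS$, with minimum value $E_b$—while everything else is bookkeeping with the conventions.
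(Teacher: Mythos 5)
Your proof is correct and follows essentially the same route as the paper, which likewise deduces the theorem by applying Theorem \ref{thm:majorization-b}(1) to the finite $\SSS$-test sequence $\mathbf{a}$ after observing the identity $\gamma(\SSS,b,\mathbf{a}) = \sum_{k=0}^n \ivr_k(\SSS,b,\mathbf{a})$, and then converting the per-$b$ exponent inequalities into divisibility. Your appending trick to secure the endpoint $m=n$ is a careful refinement rather than a different approach --- the paper states the majorization only for $1 \le m < n$ yet uses it at $m=n$ in its one-line proof --- so your version is, if anything, tighter.
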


Theorem \ref{thm:bhar10revisited}  follows by  direct application of  Theorem \ref{thm:majorization-b} (1)  to the
finite  $\SSS$-test sequence $\mathbf{a}$, noting that
$\gamma(\SSS, b, \mathbf{a}) =  \sum_{i=0}^n \ivr_i(\SSS, b, \mathbf{a}).$
We do not know whether Theorem \ref{thm:bhar10revisited} is sharp.

%
%

\subsection{Contents of the paper}\label{sec:35a}

 In Section \ref{sec:Bhargava-Z[[t]]} we prove the max-min characterization
 of $t$-ordering invariants for subsets $U$ of $\QQ[[t]]$.  
 This section is written to be self-contained, as a paradigm for more
 general max-min characterizations in Bhargava's theory.
 We draw consequences about the $U$-sequences in $\ddR[[t]]$, and the
 integrality of $t$-binomial coefficients. 
 
   In Section \ref{sec:5} we prove the main result of the paper, the well-definedness of
 $b$-exponent sequence invariants 
 for  subsets $\SSS$ of $\ZZ$.
   This is done using ``congruence-preserving maps" $\varphi_{b}(\cdot)$
 into a larger ring $\QQ[[t]][x]$ to which Bhargava's theory applies. 
  These maps do not preserve the ring operations, addition or multiplication.
  They are given explicitly,  using  base $b$ digit expansions,
  and are  ``universal" in $\SSS$, working for all $\SSS \subseteq \ZZ$.
  In Section \ref{subsec:proofs-53} we combine results
 to  complete the proof of the well-definedness  result, Theorem \ref{thm:well-definedness}.

  In Section \ref{sec:6} we prove the results of Section \ref{sec:2} about generalized factorial ideals
 and generalized binomial coefficients for $\ZZ$.

 In Section \ref{sec:7} we treat the special case $\SSS=\ZZ$, and  calculate 
 its generalized factorials, generalized binomial coefficients and generalized integers. 
  Appendix \ref{appendix:tables} gives tables of generalized factorials 
 and generalized binomial coefficients for  $\SSS= \ZZ$.

In Section \ref{sec:7a} we treat the special case $\SSS=\PP$.

 In Section \ref{sec:9} we  discuss
 Bhargava's  generalization of $p$-sequence  to Dedekind domains,
  connections to products of binomial coefficients which motivated this work,
  and further generalizations.

%
%

\section{$\tp$-orderings and $\tp$-exponent sequences  in the power series ring $\ddR[[t]]$}\label{sec:Bhargava-Z[[t]]}

In this section we consider the
formal power series ring $R = \ddR[[t]]$, which has a unique nonzero prime ideal
$(t) = t \ddR[[t]]$.  We study $(t)$-orderings  for any 
nonempty subset $U$ of this ring, and their associated $t$-exponent sequences.
We establish a max-min characterization 
of the $t$-exponent sequence 
$\alpha_k(U, {\bf f})$ associated to  a $t$-ordering ${\bf f}= (f_i(t))_{i = 0}^\infty$ of $U$,
given as  Theorem \ref{thm:max-min},
which demonstrates its independence of the $t$-ordering. 
It yields  a self-contained proof of Bhargava's Theorem \ref{thm:invariant}  of the well-definedness
of $\pp$-sequences for any subset $U$ of this ring, with $\pp= (t)$. The results and proofs of
this section hold more generally for $R= \KK[[t]]$, with $\KK$ a field of any characteristic.

%
%
\subsection{$\tp$-orderings of an arbitrary subset of $\ddR[[t]]$}\label{section:t-orderings}

Let $\ddR[[t]]$ be the ring of formal power series 
$f(t)=\sum_{i=0}^\infty\dda_it^i$
over $\ddR$, 
with additive valuation $\dnu:\ddR[[t]]\rightarrow\NN\cup\{+\infty\}$ given by $\dnu(0) = +\infty$ and
$$
\dnu(f(t))=\sup\left\{\alpha\in\mathbb{N}:t^\alpha\text{ divides }f(t)\right\}.
$$

%
%
\begin{definition}\label{t-ordering-definition}
Let $\ddS$ be a nonempty subset of $\ddR[[t]]$.
A \emph{$\tp$-ordering} of $\ddS$ is a sequence $\mathbf{f}\coloneqq\left(f_i(t)\right)_{i=0}^\infty$ of formal power series in $\ddS$ that is formed as follows:
\begin{enumerate}
\item
  Choose any formal power series $f_0(t)$ in $\ddS$.
\item Suppose that $f_j(t)$, $j=0,1,2,\dots,k-1$ are chosen. Choose $f_k(t)$ in $\ddS$ that minimizes
$$
\sum_{j=0}^{k-1}\dnu\left(f_k(t)-f_j(t)\right).
$$

\end{enumerate}

The {\em $t$-exponent sequence} $(\ddnu_k(\ddS,\mathbf{f}))_{k = 0}^{\infty}$ associated with a $t$-ordering $\mathbf{f}= (f_i(t))_{i= 0}^{\infty}$ is
$$
\ddnu_k(\ddS,\mathbf{f})= \sum_{j=0}^{k-1}\dnu\left(f_k(t)-f_j(t)\right).
$$

\end{definition}

%
%
\begin{lem}\label{lem:t-sequence-increasing}
For any nonempty subset  $\ddS$ of $\ddR[[t]]$ and a $\tp$-ordering $\mathbf{f}=\left(f_i(t)\right)_{i=0}^\infty$ of $\ddS$, the associated 
$\tp$-exponent sequence $\left(\ddnu_k(\ddS,\mathbf{f})\right)_{k=0}^\infty$ is nondecreasing.
\end{lem}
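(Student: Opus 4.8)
The plan is to establish the stronger pointwise statement $\ddnu_k(\ddS,\mathbf{f}) \le \ddnu_{k+1}(\ddS,\mathbf{f})$ for every $k \ge 0$, which immediately yields that the sequence is nondecreasing. The entire argument rests on a single observation: the element selected at step $k$ of a $\tp$-ordering is by construction a minimizer over \emph{all} of $\ddS$, and the element $f_{k+1}(t)$ chosen at the next step is itself a member of $\ddS$, hence an admissible competitor in that earlier minimization.

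First I would isolate the final summand in the definition of $\ddnu_{k+1}(\ddS,\mathbf{f})$, writing
\[
\ddnu_{k+1}(\ddS,\mathbf{f}) = \sum_{j=0}^{k}\dnu\!\left(f_{k+1}(t)-f_j(t)\right) = \left(\sum_{j=0}^{k-1}\dnu\!\left(f_{k+1}(t)-f_j(t)\right)\right) + \dnu\!\left(f_{k+1}(t)-f_k(t)\right).
\]
Next I would invoke the defining minimality of the $\tp$-ordering at step $k$: the series $f_k(t)$ is chosen so that $\sum_{j=0}^{k-1}\dnu(f_k(t)-f_j(t))$ is least among $\sum_{j=0}^{k-1}\dnu(a-f_j(t))$ over all $a \in \ddS$. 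Applying this with the admissible choice $a = f_{k+1}(t) \in \ddS$ gives
\[
\sum_{j=0}^{k-1}\dnu\!\left(f_{k+1}(t)-f_j(t)\right) \ge \sum_{j=0}^{k-1}\dnu\!\left(f_k(t)-f_j(t)\right) = \ddnu_k(\ddS,\mathbf{f}).
\]

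Finally, since $\dnu$ takes values in $\NN \cup \{+\infty\}$, the leftover term is nonnegative, $\dnu(f_{k+1}(t)-f_k(t)) \ge 0$. Combining the two displays yields $\ddnu_{k+1}(\ddS,\mathbf{f}) \ge \ddnu_k(\ddS,\mathbf{f})$, completing the proof. I do not expect any genuine obstacle here; the argument is a one-line application of the minimizing property. The only point meriting a word of care is the degenerate case in which $f_{k+1}(t)$ coincides with some earlier $f_j(t)$, forcing a summand (and hence the whole sum) to equal $+\infty$; in that event the desired inequality holds trivially under the standard arithmetic conventions on $\NN \cup \{+\infty\}$, so no separate treatment is needed.
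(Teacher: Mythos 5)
Your proof is correct and is essentially identical to the paper's: both invoke the minimality defining the choice of $f_k(t)$ with the competitor $f_{k+1}(t)\in\ddS$, then add the nonnegative term $\dnu\left(f_{k+1}(t)-f_k(t)\right)$ to conclude. No gaps; the remark about the degenerate $+\infty$ case is a harmless extra precaution the paper omits.
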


\begin{proof}
Let $\ddS\subseteq\ddR[[t]]$, and let $\mathbf{f}=\left(f_i(t)\right)_{i=0}^\infty$ be a $\tp$-ordering of $\ddS$. Let $k$ be a nonnegative integer. 

From Definition \ref{t-ordering-definition}, we have
$$
\sum_{j=0}^{k-1}\dnu\left(f_k(t)-f_j(t)\right)\le\sum_{j=0}^{k-1}\dnu\left(f_{k+1}(t)-f_j(t)\right).
$$
Hence $\ddnu_k(\ddS,\mathbf{f})\le\ddnu_{k+1}(\ddS,\mathbf{f})-\dnu\left(f_{k+1}(t)-f_k(t)\right)\le\ddnu_{k+1}(\ddS,\mathbf{f})$.
\end{proof}

%
%
\subsection{Well-definedness of  $t$-exponent sequences 
of arbitrary nonempty $U\subseteq \ddR[[t]]$; max-min-characterization}\label{section:max-min-characterization}

Our proofs rely on a special case of
 Bhargava's well-definedness  result on $p$-orderings for Dedekind rings (\cite[Theorem 1]{Bhar:97a}),
rephrased in terms of $t$-exponent sequences.

%
%
\begin{thm}
\label{thm:invariant}
For all nonempty subsets  $\ddS$  of $\ddR[[t]]$ 
and any $\tp$-ordering $\mathbf{f}=\left(f_i(t)\right)_{i=0}^\infty$ of $\ddS$
 the $\tp$-exponent sequence 
 \begin{equation}\label{t-sequence-definition}
\ddnu_k(\ddS,\mathbf{f}):=\sum_{j=0}^{k-1}\dnu\left(f_k(t)-f_j(t)\right) \in \NN \cup \{ +\infty\}
\end{equation}
is independent of the choice of $\tp$-ordering $\mathbf{f}$ of $\ddS$.
\end{thm}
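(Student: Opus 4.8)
The plan is to prove Theorem \ref{thm:invariant} by establishing an explicit formula for each $\ddnu_k(\ddS,\mathbf{f})$ that makes no reference to the choice of $t$-ordering, from which well-definedness is immediate. The formula I would aim for (this is the content of the max-min characterization, Theorem \ref{thm:max-min}) is
\[
\ddnu_k(\ddS,\mathbf{f}) \;=\; \max_{\substack{W\subseteq \ddS\\ |W|=k}}\ \min_{x\in \ddS\setminus W}\ \sum_{w\in W}\dnu(x-w),
\]
valid for every $t$-ordering $\mathbf{f}$, with the right-hand side read as $+\infty$ when $|\ddS|\le k$. Since the right-hand side depends only on $\ddS$ and $k$, equality for all $t$-orderings yields the independence claim \eqref{t-sequence-definition}. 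The basic tool throughout is the level-counting identity $\dnu(f-g)=\#\{d\ge1:\ f\equiv g \pmod{t^d}\}$, which converts each sum $\sum_{w\in W}\dnu(x-w)$ into $\sum_{d\ge1}\#\{w\in W:\ w\equiv x \pmod{t^d}\}$, a count of how the balls (residue classes mod $t^d$ meeting $\ddS$) are occupied by $W$.

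First I would dispatch the easy inequality ``$\ge$''. Taking $W=\{f_0,\dots,f_{k-1}\}$, the defining minimality property in Definition \ref{t-ordering-definition} says exactly that $f_k$ minimizes $\sum_{j<k}\dnu(x-f_j)$ over $x\in \ddS$; since terms with $x\in W$ contribute $\dnu(0)=+\infty$, this minimum is attained in $\ddS\setminus W$ and equals $\ddnu_k(\ddS,\mathbf{f})$. Hence the maximum over $W$ is at least $\ddnu_k(\ddS,\mathbf{f})$, realized by the greedy prefix.

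The hard inequality ``$\le$'', which I expect to be the main obstacle, asserts that the greedy prefix in fact \emph{maximizes} $g(W):=\min_{x\in \ddS\setminus W}\sum_{w\in W}\dnu(x-w)$ over all $k$-subsets $W$; equivalently, every $W$ admits some $x\in \ddS\setminus W$ with $\sum_{w\in W}\dnu(x-w)\le \ddnu_k(\ddS,\mathbf{f})$. Via the level-counting identity, $g(W)$ is the cost of optimally adjoining one further point to $W$, and it is large precisely when $W$ is spread evenly among the nested balls. I would prove that the greedy choices produce a configuration that is \emph{level-wise balanced} — at each level $d$, and within each ball, the $k$ chosen points are distributed as evenly as the tree of $\ddS$-occupied balls permits — and that such balanced configurations maximize $g$. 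The mechanism is a balancing exchange: if two sibling balls mod $t^{d}$ (sharing a parent) have occupancies differing by at least $2$, moving one point of $W$ from the fuller to the emptier sibling (realized within $\ddS$) does not decrease $g(W)$; iterating drives any $W$ to the balanced profile without decreasing $g$, giving $g(W)\le \ddnu_k(\ddS,\mathbf{f})$. The crux is the single-exchange estimate, an ultrametric convexity computation resting on the nesting $\#(W\cap C')\le \#(W\cap C)$ of occupancies whenever $C'\subseteq C$ is a finer ball; the strict convexity of $n\mapsto\binom{n}{2}$ is what forces balanced configurations to be extremal.

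An equivalent and perhaps more transparent packaging, which I would use to organize the computation, is the cumulative identity
\[
\sum_{i=0}^{m}\ddnu_i(\ddS,\mathbf{f})=\min\Bigl\{\,\textstyle\sum_{0\le i<j\le m}\dnu(g_i-g_j):\ g_0,\dots,g_m\in \ddS\,\Bigr\}.
\]
Its left side equals $\sum_{d\ge1}\sum_{C}\binom{\#(\{f_0,\dots,f_m\}\cap C)}{2}$, summed over balls $C$ mod $t^d$, which depends only on the occupancy profile, while its right side is visibly ordering-free; well-definedness of the increments then follows by subtracting consecutive partial sums, and Lemma \ref{lem:t-sequence-increasing} is recovered as a consistency check. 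Here too the one nontrivial point is that the greedy profile attains the global minimum, i.e. is balanced. Finally, the entire argument uses only that $\dnu$ is a discrete valuation satisfying the ultrametric inequality, so it goes through verbatim for $\KK[[t]]$ over any field $\KK$, as claimed.
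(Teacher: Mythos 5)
Your target identity is true, and your ``$\ge$'' direction is fine (it is exactly the paper's equation \eqref{eqn:equality-qk}); the identity, once proved, would indeed give Theorem \ref{thm:invariant}. But your proof of the hard direction ``$\le$'' rests on a balancing-exchange lemma that is false, together with the claim that greedy prefixes are level-wise balanced, which is also false. Counterexample: take $\ddS=\{0,\,t,\,2t,\,1,\,1+t^{10}\}$ and $k=4$. A $t$-ordering is $0,1,t,2t,1+t^{10}$, with $t$-exponent sequence $0,0,1,2,10$. The greedy prefix $W=\{0,1,t,2t\}$ has level-$1$ occupancies $(3,1)$ in the two occupied residue classes mod $t$ --- it is not balanced --- and $g(W)=\sum_{w\in W}\dnu\left(1+t^{10}-w\right)=10=\ddnu_4(\ddS,\mathbf{f})$. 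The level-$1$-balanced set $W'=\{0,t,1,1+t^{10}\}$, with occupancies $(2,2)$, has $g(W')=\sum_{w\in W'}\dnu(2t-w)=1+1+0+0=2$. So each of the three ways of ``moving one point of $W$ from the fuller to the emptier sibling, realized within $\ddS$'' strictly decreases $g$ from $10$ to $2$, contradicting your exchange lemma, and the maximizers of $g$ are exactly the \emph{unbalanced} subsets. The same failure appears in your cumulative packaging: the greedy prefix attains the minimum $F(\{0,1,t,2t\})=3$, while the balanced configuration has $F(\{0,t,1,1+t^{10}\})=1+10=11$, so the sibling exchange from occupancy $(3,1)$ to $(2,2)$ strictly increases the energy.

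The reason is the point you mention only in passing: an exchange must be realized within $\ddS$, and the only available points of $\ddS$ in the emptier ball may be deeply congruent to points of $W$ already sitting there, so the one-level convexity gain (at most $n_1-n_2-1$) is swamped by forced costs at deeper levels. Consequently ``distributed as evenly as the tree of $\ddS$-occupied balls permits'' cannot be made into a level-by-level condition; any definition that accounts simultaneously for all levels and for availability inside $\ddS$ is essentially a restatement of the theorem, making the argument circular. This inhomogeneity of $\ddS$ is precisely what the paper's proof is engineered to handle: it proves the hard direction not by exchanges but by expanding an arbitrary $t$-primitive polynomial in the basis $q_j(x;t)=\prod_{i<j}\left(x-f_i(t)\right)$ and running an induction on valuations (Lemma \ref{lem:polynomial} and Proposition \ref{prop:Polya}). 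Your subset max--min formula is then a correct \emph{corollary}: apply Proposition \ref{prop:Polya} to the split $t$-primitive polynomial $\prod_{w\in W}(x-w)$ and combine with \eqref{eqn:equality-qk}. So the formula you aimed for is right, but the combinatorial mechanism you propose to prove it does not work and would need to be replaced by something like the paper's linear-algebra induction.
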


Below we  establish a new characterization of the $t$-exponent sequence   $\ddnu_k(\ddS,\mathbf{f})$ of a $t$-ordering $\mathbf{f}$ 
as a  max-min expression, which does not depend on the $t$-ordering $\mathbf{f}$, thus giving
a direct proof of Theorem \ref{thm:invariant} .
The max-min characterization concerns polynomials $p(x;t)$ in $x$ with coefficients in the ring  $\ddR[[t]]$.
We make the following definition.

\begin{definition}
A polynomial $p(x;t)\in\ddR[[t]][x]$, with coefficients in $\ddR[[t]]$, 
is \emph{$t$-primitive} if $\dnu([x^i]p(x;t))=0$ for some  $i$.
Here $[x^{i}]p(x;t)$ denotes the coefficient of $x^i$ in $p(x;t)$; i.e., if $p(x;t)=\sum_{j \ge 0} c_j(t) x^j$, then $[x^i]p(x;t)=c_i(t)$.
\end{definition}

An important feature of the set of  $t$-primitive polynomials is that it is closed under multiplication; this follows from Gauss's lemma for polynomials. 

We show $\ddnu_k(\ddS,\mathbf{f})$ is given by a max-min condition on 
primitive degree $k$ polynomials  (in $x$) in the polynomial ring $\ddR[[t]][x]$.

\begin{thm}[Max-min characterization of $t$-exponent sequences]\label{thm:max-min}
Let $\mathbf{f}=\left(f_i(t)\right)_{i=0}^\infty$ be a $\tp$-ordering of $\ddS$. Then
\begin{equation}\label{eqn:nu=maxmin}
\ddnu_k(\ddS,\mathbf{f})=\max_{p(x;t)\in \Prim_k} \left[ \min\left\{\dnu(p(f(t);t)):f(t)\in\ddS\right\}\right],
\end{equation}
where the maximum runs over the set $\Prim_k$ of all $t$-primitive polynomials $p(x;t)$ of degree $k$ in $x$. In addition,
 if
$$
q_k(x;t):=\prod_{j=0}^{k-1}\left(x-f_j(t)\right),
$$
then
\begin{equation}\label{eqn:equality-qk}
\ddnu_k(\ddS,\mathbf{f})=\min\left\{\dnu\left(q_k(f(t);t)\right):f(t)\in\ddS\right\}.
\end{equation}
\end{thm}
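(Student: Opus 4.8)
The plan is to prove the two displayed identities in tandem, deriving \eqref{eqn:equality-qk} directly and then sandwiching the max-min expression in \eqref{eqn:nu=maxmin} between two bounds. The identity \eqref{eqn:equality-qk} is immediate: since $\dnu$ is an additive valuation on $\ddR[[t]]$, for every $f(t)\in\ddS$ one has $\dnu(q_k(f(t);t))=\sum_{j=0}^{k-1}\dnu(f(t)-f_j(t))$, and the minimality built into the $t$-ordering $\mathbf{f}$ says exactly that $f_k(t)$ attains the minimum of this quantity over $\ddS$, whose value is $\ddnu_k(\ddS,\mathbf{f})$. Because $q_k(x;t)$ is monic of degree $k$ in $x$, its leading coefficient is a unit, so $q_k\in\Prim_k$; hence $\min\{\dnu(q_k(f(t);t)):f(t)\in\ddS\}=\ddnu_k(\ddS,\mathbf{f})$ is one of the candidates appearing in the maximum of \eqref{eqn:nu=maxmin}, which yields the inequality $\ddnu_k(\ddS,\mathbf{f})\le\max_{p\in\Prim_k}[\cdots]$.

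The substance is the reverse inequality: for every $t$-primitive $p(x;t)$ of degree $k$ I must show $\min\{\dnu(p(f(t);t)):f(t)\in\ddS\}\le\ddnu_k(\ddS,\mathbf{f})$. First I would record a translation lemma. Expanding $p$ in the Newton basis $q_0,q_1,\dots,q_k$ is legitimate because each $q_j$ is monic of degree $j$ with coefficients in $\ddR[[t]]$, so the change of basis from $\{x^i\}$ is unitriangular over the local ring $\ddR[[t]]$; write $p(x;t)=\sum_{j=0}^k c_j(t)\,q_j(x;t)$ with $c_j(t)\in\ddR[[t]]$. Reducing this unitriangular relation modulo $t$ (where the reduced $q_j$ remain monic of distinct degrees, hence a basis) shows that $p$ is $t$-primitive if and only if some coefficient $c_j(t)$ is a unit, that is $\dnu(c_j(t))=0$ for some $j$.

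I would then argue by contradiction. Assume $\alpha_k:=\ddnu_k(\ddS,\mathbf{f})<+\infty$ (the infinite case being trivial) and $\dnu(p(f_i(t);t))\ge\alpha_k+1$ for all $i=0,1,\dots,k$. I claim $\dnu(c_j(t))+\ddnu_j(\ddS,\mathbf{f})\ge\alpha_k+1$ for every $j$. This follows by induction on $j$ from the triangular evaluation recurrence $c_j(t)\,q_j(f_j(t);t)=p(f_j(t);t)-\sum_{\ell=0}^{j-1}c_\ell(t)\,q_\ell(f_j(t);t)$, using the ultrametric inequality together with the bound $\dnu(q_\ell(f_j(t);t))\ge\ddnu_\ell(\ddS,\mathbf{f})$, which holds because $f_j(t)\in\ddS$ and $\ddnu_\ell$ is the minimum of $\dnu(q_\ell(f(t);t))$ over $\ddS$. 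Since the $t$-exponent sequence is nondecreasing by Lemma \ref{lem:t-sequence-increasing}, we have $\ddnu_j\le\alpha_k$, so the claim forces $\dnu(c_j(t))\ge1$ for all $j$; thus no $c_j(t)$ is a unit, contradicting the translation lemma. Hence some $f_i(t)$ satisfies $\dnu(p(f_i(t);t))\le\alpha_k$, which gives the reverse inequality and, combined with the first paragraph, both \eqref{eqn:nu=maxmin} and \eqref{eqn:equality-qk}.

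The main obstacle is precisely this reverse inequality. The naive approach—reduce $p$ modulo $t$ and observe that $f_0(t),\dots,f_k(t)\bmod t$ would be $k+1$ roots of a degree-$k$ polynomial—fails because the residues need not be distinct, so repeated residues must be absorbed by higher-order $t$-adic data. The weighted inductive bound $\dnu(c_j)+\ddnu_j\ge\alpha_k+1$ is what makes this work: it is calibrated so that the smaller invariants $\ddnu_\ell$ for $\ell<j$ are compensated by correspondingly larger lower bounds on $\dnu(c_\ell)$, and the monotonicity of $(\ddnu_j)$ is exactly what converts the weighted bound back into $\dnu(c_j)\ge1$. Getting the weight and the direction of the monotonicity right is the crux; the remaining steps are routine valuation bookkeeping.
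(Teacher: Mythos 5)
Your proof is correct and takes essentially the same approach as the paper: expansion of $p(x;t)$ in the Newton basis $\{q_j(x;t)\}$ attached to the $t$-ordering, transfer of $t$-primitivity to the Newton coefficients through the unitriangular change of basis, an ultrametric induction on the evaluation recurrence, and the monotonicity of the exponent sequence (Lemma~\ref{lem:t-sequence-increasing}), with the monic, hence $t$-primitive, polynomial $q_k$ attaining the value $\ddnu_k(\ddS,\mathbf{f})$. The only differences are presentational: the paper (Lemma~\ref{lem:polynomial}) proves the bound $\min_{f(t)\in\ddS}\dnu(p(f(t);t))\le\dnu\left(c_j(t)\right)+\ddnu_j(\ddS,\mathbf{f})$ directly, whereas you run the identical induction inside a proof by contradiction and need only the values of $p$ at the finitely many points $f_0(t),\dots,f_k(t)$.
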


We defer the   proof of  Theorem \ref{thm:max-min} to
Section \ref{section:proof-max-min-characterization}.

\begin{proof}[Proof~of~Theorem~\emph{\ref{thm:invariant}} assuming Theorem ~\emph{\ref{thm:max-min}}]
In Theorem \ref{thm:max-min},  the right side of \eqref{eqn:nu=maxmin} 
does not depend on the choice of $\tp$-ordering $\mathbf{f}$, so the result follows.
\end{proof}

 The max-min Theorem \ref{thm:max-min} 
 gives useful  information about $t$-exponent sequences.
It gives monotonicity of $t$-exponent sequences under increase the set $U$,  in  Theorem \ref{thm:subset-decreasing}.
We also derive a  weak majorization result for general $U$-test sequences given in Theorem \ref{thm:majorization}.
These results are inherited by $b$-exponent sequences for all $b \ge 2$, as shown in Section \ref{section:B-test-majorization}.

%
%
\subsection{Proof of max-min-characterization for $t$-exponent sequences of nonempty $\ddS\subseteq\ddR[[t]]$}\label{section:proof-max-min-characterization}

The proof  of Theorem \ref{thm:max-min}  is based on two preliminary lemmas 
concerning  polynomials $p(x;t)$ in $x$ with coefficients in $\ddR[[t]]$.

%
%
%

\begin{lem}\label{lem:polynomial}
Suppose that $\mathbf{f} =\left(f_i(t)\right)_{i=0}^\infty$ is a $\tp$-ordering of $\ddS$, and set
$q_j(x;t) \in \ddR[[t]][x]$ be the monic polynomials given by $q_0(x;t)=1$ and,
for $j \ge 1$, 
\begin{equation}\label{eqn:q-jay}
q_j(x; t) := \prod_{i=0}^{j-1}\left(x-f_i(t)\right).
\end{equation}
Let $p(x;t) \in \QQ[[t]][x]$ be any polynomial (in $x$) of degree at most $k$. Then
\begin{equation}\label{eqn:p-basis}
p(x;t):= \sum_{j=0}^k c_j(t)q_j(x;t)
\end{equation}
for unique formal power series $c_j(t)\in \QQ[[t]]$, $0 \le j \le k.$

\emph{(1)} For $0\le j \le k$, set  
$\mu_j:=\min_{f(t) \in \ddS} \dnu\left(c_j(t)q_j(f(t);t)\right).$
Then,   
\begin{equation}\label{eqn:min-nu-pj-S}
\mu_j=\dnu\left(c_j(t)\right)+\sum_{i=0}^{j-1}\dnu\left(f_j(t)-f_i(t)\right).
\end{equation}

\emph{(2)} Set 
$\mu := \min_{f(t) \in \ddS} \dnu(p(f(t);t))$.
Then, for $0 \le j \le k$,
\begin{equation}\label{eqn:min-nu-p-S}
\mu \le \mu_j.
\end{equation}
\end{lem}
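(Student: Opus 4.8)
The plan is to handle the basis expansion and identity \eqref{eqn:min-nu-pj-S} quickly, and to treat \eqref{eqn:min-nu-p-S} as the real content. For the expansion \eqref{eqn:p-basis}: because each $q_j(x;t)$ is monic of degree exactly $j$ in $x$, the transition between $\{q_0,\dots,q_k\}$ and $\{1,x,\dots,x^k\}$ is a unitriangular matrix over $\QQ[[t]]$, which is invertible with inverse again over $\QQ[[t]]$; hence the $c_j(t)\in\QQ[[t]]$ exist and are unique (concretely, set $c_k=[x^k]p$, pass to $p-c_kq_k$ of lower degree, and iterate). For part (1), I would use that $\dnu$ is an additive valuation and that $q_j(f(t);t)=\prod_{i=0}^{j-1}(f(t)-f_i(t))$, so $\dnu\big(c_j(t)q_j(f(t);t)\big)=\dnu(c_j(t))+\sum_{i=0}^{j-1}\dnu(f(t)-f_i(t))$ for every $f\in\ddS$. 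Minimizing over $f$ and pulling out the constant $\dnu(c_j(t))$, the defining minimality of a $t$-ordering (Definition \ref{t-ordering-definition}) says exactly that $f_j$ attains $\min_{f\in\ddS}\sum_{i<j}\dnu(f-f_i)$, yielding \eqref{eqn:min-nu-pj-S}. I write $w_j:=\ddnu_j(\ddS,\mathbf{f})=\sum_{i=0}^{j-1}\dnu(f_j-f_i)$, so $\mu_j=\dnu(c_j)+w_j$, and record that $(w_j)$ is nondecreasing by Lemma \ref{lem:t-sequence-increasing}.

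For part (2) I would fix $j$; since the case $\mu_j=+\infty$ is vacuous, assume $w_j<+\infty$, so by monotonicity $w_i<+\infty$ for all $i\le j$ and $f_0,\dots,f_j$ are pairwise distinct. Evaluating $p$ at these points, with $d_i:=p(f_i;t)$ and $L_{il}:=q_l(f_i;t)$, one has $d_i=\sum_{l=0}^{i}L_{il}c_l$, since $q_l(f_i;t)=\prod_{m<l}(f_i-f_m)$ vanishes once $l>i$. Thus the $(j+1)\times(j+1)$ matrix $L=(L_{il})_{0\le i,l\le j}$ is lower triangular with nonzero diagonal $L_{ii}=q_i(f_i;t)$ of valuation $w_i$, hence invertible over $\QQ((t))$; writing $M:=L^{-1}$ gives $c_j=\sum_{i=0}^{j}M_{ji}d_i$.

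The crux is the estimate $\dnu(M_{ji})\ge -w_j$ for $0\le i\le j$, which I would prove by induction on the row index. The diagonal entry $M_{jj}=1/L_{jj}$ has valuation $-w_j$, and for $i<j$ the triangular inversion formula $M_{ji}=-L_{jj}^{-1}\sum_{m=i}^{j-1}L_{jm}M_{mi}$ combines two inputs: the minimality of $f_m$ in the $t$-ordering gives $\dnu(L_{jm})=\sum_{l<m}\dnu(f_j-f_l)\ge\sum_{l<m}\dnu(f_m-f_l)=w_m$, while the inductive hypothesis gives $\dnu(M_{mi})\ge -w_m$; hence each summand has valuation $\ge w_m-w_m=0$ and $\dnu(M_{ji})\ge -w_j$. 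Granting this, for $c_j\ne 0$ the ultrametric inequality yields
\[
\dnu(c_j)\ge\min_{0\le i\le j}\big(\dnu(M_{ji})+\dnu(d_i)\big)\ge -w_j+\min_{0\le i\le j}\dnu\big(p(f_i;t)\big),
\]
so $\min_{0\le i\le j}\dnu(p(f_i;t))\le\dnu(c_j)+w_j=\mu_j$; since each $f_i\in\ddS$, this gives $\mu\le\mu_j$, which is \eqref{eqn:min-nu-p-S}.

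The main obstacle is precisely the inverse-entry bound $\dnu(M_{ji})\ge -w_j$: it is the one place where both the greedy minimality of the $t$-ordering (through $\dnu(L_{jm})\ge w_m$) and the monotonicity of $(w_m)$ (through $w_m\le w_j$, Lemma \ref{lem:t-sequence-increasing}) must be invoked together, and they are exactly what keep the triangular inversion from losing more than $w_j$ in valuation.
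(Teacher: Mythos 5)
Your proof is correct: the basis expansion and part (1) coincide with the paper's treatment, but your part (2) takes a genuinely different route. The paper argues by induction directly on the inequality $\mu\le\mu_j$: setting $p_j(x;t)=c_j(t)q_j(x;t)$ and evaluating the expansion at $x=f_\ell(t)$, where $q_j(f_\ell(t);t)=0$ for $j>\ell$, gives the identity $p_\ell(f_\ell(t);t)=p(f_\ell(t);t)-\sum_{j=0}^{\ell-1}p_j(f_\ell(t);t)$; the ultrametric inequality then bounds $\mu_\ell=\dnu(p_\ell(f_\ell(t);t))$ below by $\min\{\mu,\mu_0,\dots,\mu_{\ell-1}\}$, which equals $\mu$ by the induction hypothesis. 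That argument needs no matrix inversion, no passage to $\QQ((t))$, and no case analysis on finiteness (the possibilities $c_j=0$ or $w_j=+\infty$ are absorbed by the conventions for $+\infty$). Your approach instead inverts the triangular evaluation matrix $L_{il}=q_l(f_i;t)$ and proves the valuation bound $\dnu(M_{ji})\ge -w_j$ on the inverse entries by induction on rows; this is heavier (it requires invertibility, hence pairwise distinctness of $f_0,\dots,f_j$ via Lemma \ref{lem:t-sequence-increasing}), but it buys a quantitative refinement that the paper's proof does not state: $\mu_j\ge\min_{0\le i\le j}\dnu\left(p(f_i(t);t)\right)$, i.e., the minimum over just the first $j+1$ terms of the ordering already controls $\mu_j$, not merely the minimum over all of $\ddS$. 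One bookkeeping remark: your closing sentence misattributes where monotonicity enters. In the inductive step the cancellation is exact, $\dnu(L_{jm}M_{mi})\ge w_m-w_m=0$, followed by multiplication by $L_{jj}^{-1}$ of valuation $-w_j$; no comparison $w_m\le w_j$ is used there. Lemma \ref{lem:t-sequence-increasing} is needed only at the outset, to deduce from $w_j<+\infty$ that all $w_i$ with $i\le j$ are finite, so that $L$ has nonzero diagonal and is invertible.
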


\begin{proof} 
Since each $(q_j(t))_{j=0}^k$ is a  monic polynomial of degree $j$, for $0 \le j \le k $ they form a basis of 
the $\QQ[[t]]$-vector space of all polynomials of degree at most $k$ in $\QQ[[t]][x]$, so
the $c_j(t)$ are uniquely defined.

(1) 
We have
\begin{equation}\label{eqn:valuation-p}
\dnu\left(p_j(f(t);t)\right)=\dnu\left(c_j(t)\right)+\sum_{i=0}^{j-1}\dnu\left(f(t)-f_i(t)\right).
\end{equation}
By  definition of $t$-exponent sequence, the  right-hand side is minimized  over $f(t)\in \ddS$ at $f(t)=f_j(t)$.

 (2)   The  quantity 
$\mu$
 depends on $p(x; t)$,  but it does not depend on the $t$-ordering  $\mathbf{f}$.
We consider $k$ as fixed, and prove \eqref{eqn:min-nu-p-S} by induction on $j \ge 0$. 
 For the base case $j =0$, choosing $f(t)= f_0(t) \in \ddS$ we have $p_j(f_0(t); t)=0$ for $j \ge 1$ so 
$$
\mu\le\dnu\left(p\left(f_0(t);t\right)\right)=\dnu\left(p_0\left( f_0(t), t\right)\right) = \dnu\left(c_0(t)\right)= \mu_0.
$$
Hence \eqref{eqn:min-nu-p-S} is true for fixed $k$ and $j=0$. 

Now, suppose that \eqref{eqn:min-nu-p-S} holds for $j=0,\dots,\ell-1$, where $1\le\ell\le k$,
we  show  it holds for $j = \ell$.  Define $p_j(x;t) = c_j(t) q_j(x;t).$
Then using \eqref{eqn:valuation-p} we  can rewrite property (1)
for the choice $j=\ell$  as
 asserting
\begin{equation}\label{eqn:min-nu-pj-S-new}
\mu_{\ell} = \min_{f(t) \in \ddS} \dnu\left(p_\ell(f(t);t)\right)=\dnu\left(p_\ell\left(f_\ell(t);t\right)\right).
\end{equation}
From the identity
$$
p_\ell\left(f_\ell(t);t\right)=p\left(f_\ell(t);t\right)-\sum_{j=0}^{\ell-1}p_j\left(f_\ell(t);t\right)
$$
and the valuation property of $\dnu$, we deduce 
\begin{align*}
\dnu\left(p_\ell\left(f_\ell(t);t\right)\right)&\ge\min\left\{\dnu\left(p\left(f_\ell(t);t\right)\right)\right\}\cup\left\{\dnu\left(p_j\left(f_\ell(t);t\right)\right):0\le j\le\ell-1\right\}\\
&\ge \min\,\{ \mu \}\cup\left\{\min_{f(t)\in\ddS}[ \dnu\left(p_j(f(t);t)\right)]:\,0\le j\le\ell-1\right\}  \\
&= \min\, \{\mu, \mu_0, \mu_1, \dots, \mu_{\ell-1}\} = \mu,
\end{align*}
with  the induction hypothesis used on the last line.
 From \eqref{eqn:min-nu-pj-S-new}, we conclude that $\mu \le \mu_{\ell}$, completing
 the induction step.
\end{proof}

We first show each $t$-primitive polynomial gives a lower bound for $\ddnu_k(\ddS, \mathbf{f})$.
%
%
\begin{prop}\label{prop:Polya}
Let $p(x;t)$ be a $t$-primitive polynomial in $x$ of degree $k$, and $\ddS$ a subset of $\ddR[[t]]$.
Then for any $t$-ordering $\mathbf{f}$ for $\ddS$, we have
$$
\min\left\{\dnu(p(f(t);t)):f(t)\in\ddS\right\}\le\ddnu_k(\ddS,\mathbf{f}).
$$
\end{prop}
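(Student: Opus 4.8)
The plan is to reduce the statement to Lemma \ref{lem:polynomial} together with the monotonicity recorded in Lemma \ref{lem:t-sequence-increasing}. Write $\mu := \min\{\dnu(p(f(t);t)):f(t)\in\ddS\}$ for the left-hand side. Since $p(x;t)$ has degree $k$, I would first invoke Lemma \ref{lem:polynomial} to expand it in the monic basis $q_j(x;t)=\prod_{i=0}^{j-1}(x-f_i(t))$ attached to the given $\tp$-ordering $\mathbf{f}$, obtaining
$$
p(x;t)=\sum_{j=0}^k c_j(t)\,q_j(x;t),\qquad c_j(t)\in\QQ[[t]].
$$
Part (2) of that lemma then yields $\mu\le\mu_j$ for every $0\le j\le k$, where part (1) evaluates
$$
\mu_j=\dnu\left(c_j(t)\right)+\sum_{i=0}^{j-1}\dnu\left(f_j(t)-f_i(t)\right)=\dnu\left(c_j(t)\right)+\ddnu_j(\ddS,\mathbf{f}).
$$

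The key step is to produce an index $j^{\ast}$ with $\dnu(c_{j^{\ast}}(t))=0$ out of the $t$-primitivity hypothesis. Each $q_j(x;t)$ lies in $\QQ[[t]][x]$, so every coefficient $[x^i]q_j(x;t)$ has $\dnu\ge0$. Reading off the coefficient of $x^i$ from the expansion, and using that the $q_j$ are monic of degree $j$, gives the unitriangular relation $[x^i]p(x;t)=c_i(t)+\sum_{j>i}c_j(t)\,[x^i]q_j(x;t)$; the ultrametric inequality for $\dnu$ then forces $\dnu([x^i]p(x;t))\ge\min_{0\le j\le k}\dnu(c_j(t))$ for every $i$. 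Because $p$ is $t$-primitive, some coefficient $[x^{i_0}]p(x;t)$ has $\dnu=0$, and since the $c_j(t)$ lie in $\QQ[[t]]$ (so $\dnu\ge0$), this forces $\min_{0\le j\le k}\dnu(c_j(t))=0$. I would let $j^{\ast}$ be an index attaining this minimum.

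For this $j^{\ast}$ the formula for $\mu_{j^{\ast}}$ collapses to $\mu_{j^{\ast}}=\ddnu_{j^{\ast}}(\ddS,\mathbf{f})$, so $\mu\le\ddnu_{j^{\ast}}(\ddS,\mathbf{f})$. Finally, since $0\le j^{\ast}\le k$ and the $\tp$-exponent sequence is nondecreasing by Lemma \ref{lem:t-sequence-increasing}, I conclude $\ddnu_{j^{\ast}}(\ddS,\mathbf{f})\le\ddnu_k(\ddS,\mathbf{f})$, which chains to $\mu\le\ddnu_k(\ddS,\mathbf{f})$, as desired. The only genuinely delicate point is the passage from $t$-primitivity of $p$, a condition on its monomial coefficients, to the existence of a unit-valuation coefficient $c_{j^{\ast}}(t)$ in the $q_j$-expansion; this is precisely where I rely on the unitriangularity of the change of basis and the nonnegativity of $\dnu$ on $\QQ[[t]]$. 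Everything else is bookkeeping with the two lemmas.
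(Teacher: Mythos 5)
Your proposal is correct and follows essentially the same route as the paper: expand $p(x;t)$ in the monic basis $q_j(x;t)$ via Lemma \ref{lem:polynomial}, use part (2) to bound $\mu\le\mu_j$, extract an index with $\dnu(c_j(t))=0$ from $t$-primitivity via the unitriangular change of basis between $\{x^j\}$ and $\{q_j(x;t)\}$, and finish with Lemma \ref{lem:t-sequence-increasing}. The only cosmetic difference is that you run the unitriangular relation forward and apply the ultrametric inequality to conclude $\min_{0\le j\le k}\dnu(c_j(t))=0$, whereas the paper pins down the specific index $j_0$ as the largest $j$ with $\dnu(\tilde{c}_j(t))=0$ using the inverse change of basis; both versions of this step are valid.
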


\begin{proof}
Let $\mathbf{f}=\left(f_i(t)\right)_{i=0}^\infty$ be a $\tp$-ordering of $\ddS$. 
We let $q_j(x;t) = \prod_{i=0}^{j-1}(x- f_j(t))$ for $0 \le j \le k$.
Let  $p(x;t)= \sum_{j=0}^k \tilde{c}_j(t) x^j$ be an arbitrary
polynomial of degree $k$. By Lemma \ref{lem:polynomial}
it has  a unique decomposition of  the form 
$p(x;t) = \sum_{j=0}^{k} c_j(t) q_j(x; t).$

 Now suppose  $p(x;t)$ is $t$-primitive, so $p(x;t) = \sum_{j=0}^k \tilde{c}_j(t) x^j$ has
 some coefficient $\tilde{c}_{j}(t)$
 with $\dnu( \tilde{c}_j(t))=0$.
 It now follows there is some $j_0 \in \{0,\dots,k\}$ such that 
 $\dnu\left(c_{j_0}(t)\right)=0$. 
 Specifically it holds with $j_0$ being the largest $j$ for which $\dnu  (\tilde{c}_j(t)) =0.$
 (The change of basis from $\{x^j: 0 \le j \le k\}$ to $\{ q_j(x; t): 0 \le j \le k\}$ is upper triangular unipotent,
 proceeding from degree $k$ downwards.)

 Applying Lemma \ref{lem:polynomial} with $j=j_0$, we obtain
\begin{align*}
\min_{f(t) \in \ddS}\dnu(p(f(t);t))=\mu &\le \mu_{j_0}= \min_{f(t) \in \ddS} \dnu\left(p_{j_0}(f(t);t)\right)\\
&=\dnu\left(c_{j_0}(t)\right)+\sum_{i=0}^{j_0-1}\dnu\left(f_{j_0}(t)-f_i(t)\right)\\
&=\sum_{i=0}^{j_0-1}\dnu\left(f_{j_0}(t)-f_i(t)\right)\\
&=\ddnu_{j_0}(\ddS,\mathbf{f}).
\end{align*}
The last quantity is $\le\ddnu_k(\ddS,\mathbf{f})$ by Lemma \ref{lem:t-sequence-increasing}.
\end{proof}

\begin{proof}[Proof of Theorem \ref{thm:max-min}]
Taking the maximum over all $t$-primitive polynomials $p(x;t)$ of degree $k$ in $x$ in Proposition \ref{prop:Polya}, we obtain
$$
\ddnu_k(\ddS,\mathbf{f})\ge\max_{p(x;t) \in \Prim_k}\left[ \min_{f(t) \in \ddS} \dnu(p(f(t);t)) \right].
$$

To  establish \eqref{eqn:nu=maxmin}, which says
$$
\ddnu_k(\ddS,\mathbf{f})=\max_{p(x;t)\in \Prim_k} \left[ \min_{f(t) \in \ddS} \dnu(p(f(t);t)) \right],
$$
 it  suffices to prove the second assertion \eqref{eqn:equality-qk},
 which says
$$
\ddnu_k(\ddS,\mathbf{f})=\min_{f(t) \in \ddS} \dnu\left(q_k(f(t);t)\right)
$$ 
 holds  for all $k \ge 0$.
  The truth of  \eqref{eqn:equality-qk} for a given $\mathbf{f}$ implies, since $q_k(x;t)$ is a $t$-primitive polynomial in $x$ of degree $k$, 
 \begin{eqnarray}
 \ddnu_k(\ddS,\mathbf{f}) &=& \min_{f(t) \in \ddS} \dnu\left(q_k(f(t);t)\right)\\
 &\le & \max_{p(x;t)\in \Prim_k} \min_{f(t) \in \ddS} \dnu(p(f(t);t)).
 \end{eqnarray}
 We conclude that \eqref{eqn:nu=maxmin} holds.
 
 To verify \eqref{eqn:equality-qk},
by definition of 
 $\tp$-ordering $\mathbf{f}$, we have
\begin{align*}
\min\left\{\dnu\left(q_k(f(t);t)\right):f(t)\in\ddS\right\}&=\min_{f(t)\in\ddS}\sum_{j=0}^{k-1}\dnu\left(f(t)-f_j(t)\right)\\
&=\sum_{j=0}^{k-1}\dnu\left(f_k(t)-f_j(t)\right)\\
&=\ddnu_k(\ddS,\mathbf{f}),
\end{align*}
which establishes  \eqref{eqn:equality-qk}. This completes the proof.
\end{proof}

%
%

\subsection{Properties  of the associated $\tp$-exponent sequences}\label{subsection:properties-t-sequence}

We have  shown that the value $\ddnu_k(\ddS,\mathbf{f})$ does not depend on 
the $t$-ordering $\mathbf{f}$, and we will henceforth denote it as $\ddnu_k(\ddS)$.
We  call $(\ddnu_k(\ddS))_{k=0}^{\infty}$ the  \emph{$\tp$-exponent sequence of $U\subseteq\ddR[[t]]$}.

%
%
\begin{thm}\label{thm:binomial}
For $U \subseteq \ddR[[t]]$, and nonnegative integers $k$ and $\ell$, we have
\begin{equation}
\ddnu_{k+\ell}(\ddS )\ge\ddnu_k(\ddS)+\ddnu_\ell(\ddS ).
\end{equation}
\end{thm}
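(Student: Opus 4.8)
The plan is to deduce the superadditivity directly from the max-min characterization of Theorem~\ref{thm:max-min}, exploiting the fact that a product of monic polynomials is again monic, hence $t$-primitive. Fix any $\tp$-ordering $\mathbf{f}=(f_i(t))_{i=0}^\infty$ of $\ddS$, and form the two monic polynomials $q_k(x;t)=\prod_{j=0}^{k-1}(x-f_j(t))$ and $q_\ell(x;t)=\prod_{j=0}^{\ell-1}(x-f_j(t))$. By the equality \eqref{eqn:equality-qk} of Theorem~\ref{thm:max-min}, these realize the invariants as
\[
\ddnu_k(\ddS)=\min_{f(t)\in\ddS}\dnu\left(q_k(f(t);t)\right),\qquad \ddnu_\ell(\ddS)=\min_{f(t)\in\ddS}\dnu\left(q_\ell(f(t);t)\right).
\]

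First I would observe that the product $q_k(x;t)\,q_\ell(x;t)$ is a monic polynomial of degree $k+\ell$ in $x$; since its leading coefficient is $1$ and $\dnu(1)=0$, it is $t$-primitive, so it belongs to $\Prim_{k+\ell}$. (Alternatively this follows from closure of $t$-primitive polynomials under multiplication, noted after the definition of $t$-primitivity.) Feeding this particular polynomial into the max-min formula \eqref{eqn:nu=maxmin} for $\ddnu_{k+\ell}(\ddS)$ yields the lower bound
\[
\ddnu_{k+\ell}(\ddS)=\max_{p(x;t)\in\Prim_{k+\ell}}\min_{f(t)\in\ddS}\dnu\left(p(f(t);t)\right)\ge\min_{f(t)\in\ddS}\dnu\left(q_k(f(t);t)\,q_\ell(f(t);t)\right).
\]

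The remaining step is to bound this last quantity below by $\ddnu_k(\ddS)+\ddnu_\ell(\ddS)$. Because $\ddR[[t]]$ is a domain, $\dnu$ is an additive valuation, so $\dnu(q_k(f(t);t)\,q_\ell(f(t);t))=\dnu(q_k(f(t);t))+\dnu(q_\ell(f(t);t))$ for each $f(t)\in\ddS$ (with the usual conventions if a factor vanishes). The key inequality is then the superadditivity of the minimum: for any two functions $A,B\colon\ddS\to\NN\cup\{+\infty\}$ one has $\min_f(A(f)+B(f))\ge\min_f A(f)+\min_f B(f)$, since a single minimizing argument can do no better than the minimizers of the two summands separately. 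Applying this with $A(f)=\dnu(q_k(f(t);t))$ and $B(f)=\dnu(q_\ell(f(t);t))$, and inserting the displayed realizations of $\ddnu_k(\ddS)$ and $\ddnu_\ell(\ddS)$, gives the claim.

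I do not expect a serious obstacle here: the essential mechanism is the superadditivity of $\min$ over a sum of valuations, and the only point requiring care is consistency of the $+\infty$ conventions, which is harmless since $(\ddnu_k(\ddS))_k$ is nondecreasing by Lemma~\ref{lem:t-sequence-increasing}, so $\ddnu_{k+\ell}(\ddS)=+\infty$ whenever either summand is infinite. The role of Theorem~\ref{thm:max-min} is precisely to let us test $\ddnu_{k+\ell}(\ddS)$ against the single well-chosen primitive polynomial $q_k\,q_\ell$, rather than having to compare $\tp$-orderings directly.
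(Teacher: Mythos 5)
Your proposal is correct and follows essentially the same argument as the paper: both proofs test the max-min characterization of $\ddnu_{k+\ell}(\ddS)$ against the $t$-primitive polynomial $q_k(x;t)\,q_\ell(x;t)$, then use additivity of $\dnu$ together with superadditivity of the minimum over $\ddS$. The only cosmetic difference is that you justify $t$-primitivity of the product via monicity rather than citing closure of $t$-primitive polynomials under multiplication, which amounts to the same thing.
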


\begin{proof}
Let $\mathbf{f}=\left(f_i(t)\right)_{i=0}^\infty$ be a $\tp$-ordering of $\ddS$. 
Applying Theorem \ref{thm:max-min}, we obtain
\begin{align*}
\ddnu_k(\ddS)+\ddnu_\ell(\ddS)&=\min_{f(t) \in \ddS}\dnu\left(q_k(f(t);t)\right)+\min_{f(t) \in \ddS}\dnu\left(q_\ell(f(t);t)\right)\\
&\le\min_{f(t) \in \ddS} \left(\dnu\left(q_k(f(t);t)\right)+\dnu\left(q_\ell(f(t);t)\right)\right).
\end{align*}
Then
\begin{align*}
\ddnu_k(\ddS)+\ddnu_\ell(\ddS)&\le\min_{f(t) \in \ddS} \left(\dnu\left(q_k(f(t);t)q_\ell(f(t);t)\right)\right)\\
&\le\max_{p(x;t)\in \Prim_{k+\ell}}\min_{f(t) \in \ddS} \dnu(p(f(t);t))=\ddnu_{k+\ell}(\ddS),
\end{align*}
in which the last inequality used the property that products of $t$-primitive polynomials are $t$-primitive polynomials.
\end{proof}

%
%
\begin{thm}\label{thm:subset-decreasing}
Suppose that $\ddS_1\subseteq\ddS_2\subseteq \ddR[[t]]$. Then 
\begin{equation}
\ddnu_k(\ddS_1)\ge\ddnu_k(\ddS_2)
\end{equation}
holds  for each 
$k \ge 0$.
\end{thm}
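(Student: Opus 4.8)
The plan is to leverage the max-min characterization from Theorem \ref{thm:max-min}, which expresses the $t$-exponent sequence invariant in a form whose monotonicity under set inclusion is essentially immediate. The key observation is that $\ddnu_k(\ddS)$ equals a maximum over all $t$-primitive degree-$k$ polynomials $p(x;t)$ of an inner minimum $\min\{\dnu(p(f(t);t)) : f(t) \in \ddS\}$, and taking a minimum over a larger index set can only decrease (or preserve) its value. So the monotonicity of $\ddnu_k$ is inherited directly from the monotonicity of the inner minimum under enlarging $\ddS$.

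Concretely, first I would fix $k \ge 0$ and an arbitrary $t$-primitive polynomial $p(x;t) \in \Prim_k$. Since $\ddS_1 \subseteq \ddS_2$, the minimization defining the inner quantity for $\ddS_2$ ranges over a superset of the elements available for $\ddS_1$; hence
\begin{equation*}
\min\left\{\dnu(p(f(t);t)) : f(t) \in \ddS_2\right\} \le \min\left\{\dnu(p(f(t);t)) : f(t) \in \ddS_1\right\}.
\end{equation*}
This is the core inequality, and it holds for each individual $p(x;t)$.

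Next I would take the maximum over all $p(x;t) \in \Prim_k$ on both sides. Since the inequality holds termwise for every $p$, it is preserved under taking the maximum over $p$: the left side's maximum is bounded above by the right side's maximum. By Theorem \ref{thm:max-min} applied to $\ddS_2$ and to $\ddS_1$ (choosing any $t$-orderings of each, whose invariants are well-defined and independent of the ordering), the maximized left side equals $\ddnu_k(\ddS_2)$ and the maximized right side equals $\ddnu_k(\ddS_1)$. Therefore $\ddnu_k(\ddS_2) \le \ddnu_k(\ddS_1)$, which is the desired conclusion for this $k$; since $k$ was arbitrary, the statement follows for all $k \ge 0$.

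I do not expect any genuine obstacle here: the entire content is pushed into Theorem \ref{thm:max-min}, and what remains is the elementary fact that minimizing over a larger set decreases the value while the outer maximum respects termwise inequalities. The only points requiring a word of care are that $\Prim_k$ does not depend on the set $\ddS$ (so the outer index set is genuinely the same for both $\ddS_1$ and $\ddS_2$), and that the values may lie in $\NN \cup \{+\infty\}$, so the comparisons should be read in the extended nonnegative integers, where the monotonicity of $\min$ and $\max$ remains valid.
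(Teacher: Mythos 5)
Your proposal is correct and is essentially identical to the paper's own proof: both fix a $t$-primitive polynomial $p(x;t)$ of degree $k$, observe that the inner minimum over the larger set $\ddS_2$ is at most the minimum over $\ddS_1$, and then take the maximum over $\Prim_k$ and invoke Theorem \ref{thm:max-min} to identify both sides with the invariants. The extra remarks you make (that $\Prim_k$ is independent of $\ddS$ and that values live in $\NN\cup\{+\infty\}$) are sound points of care but do not change the argument.
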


\begin{proof}
Let $p(x;t)$ be a $t$-primitive polynomial of degree $k$. Since $\ddS_1\subseteq\ddS_2$, it follows that
$$
\min_{f(t) \in \ddS_1} \dnu(p(f(t);t))\ge\min_{f(T) \in \ddS_2} \dnu(p(f(t);t)).
$$
Taking the maximum over all $t$-primitive polynomials $p(x;t)$ of degree $k$ and applying Theorem \ref{thm:max-min}, we obtain 
$\ddnu_k(\ddS_1 )\ge\ddnu_k(\ddS_2  )$.
\end{proof}

%
%
\begin{thm}\label{thm:majorization}
Let $\mathbf{g} := ( g_i(t))_{i=0}^{\infty}$
be any $\ddS$-test sequence in $\QQ[[t]]$.
Then for each $n \ge 1$ there holds 
\begin{equation}\label{eqn:superfactorial-integer}
\sum_{k=1}^n \ddnu_{k}(\ddS,  \mathbf{g}) \ge \sum_{k=1}^n\ddnu_k(\ddS).
\end{equation}
Given a fixed $N \ge 1$, equality holds in \eqref{eqn:superfactorial-integer} for all $0 \le n \le N$ 
whenever  $(g_i(t))_{i=0}^N$ is the initial part of a $t$-ordering for $\ddS$.
\end{thm}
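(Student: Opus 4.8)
The plan is to recognize the left-hand side of \eqref{eqn:superfactorial-integer} as the $t$-valuation of a Vandermonde-type product and to bound that valuation below by means of the single-polynomial minimum \eqref{eqn:equality-qk}. First I would note that, since $\ddnu_k(\ddS,\mathbf{g})=\sum_{j=0}^{k-1}\dnu(g_k(t)-g_j(t))$ depends only on $g_0(t),\dots,g_k(t)$, the partial sum for fixed $n$ collapses to a sum over pairs, which is a genuine valuation because $\dnu$ is additive on the discrete valuation ring $\QQ[[t]]$:
\[
\sum_{k=1}^n \ddnu_k(\ddS,\mathbf{g})=\sum_{0\le i<j\le n}\dnu\bigl(g_j(t)-g_i(t)\bigr)=\dnu\Bigl(\textstyle\prod_{0\le i<j\le n}(g_j(t)-g_i(t))\Bigr).
\]
If two of the $g_i(t)$ coincide this quantity is $+\infty$ and the inequality is trivial, so I may assume the points distinct; it then suffices to bound the valuation of this Vandermonde product from below.

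Next I would fix a $\tp$-ordering $\mathbf{f}=(f_i(t))_{i=0}^\infty$ of $\ddS$ and form the monic polynomials $q_j(x;t)=\prod_{i=0}^{j-1}(x-f_i(t))$ of \eqref{eqn:q-jay}, and consider the $(n+1)\times(n+1)$ matrix $M=(q_j(g_i(t);t))_{0\le i,j\le n}$. Because each $q_j$ is monic of degree $j$, the change of basis from $\{x^j\}$ to $\{q_j\}$ is unipotent upper-triangular, so $\det M$ equals the ordinary Vandermonde determinant $\prod_{0\le i<j\le n}(g_j(t)-g_i(t))$ up to sign; hence $\dnu(\det M)=\sum_{k=1}^n \ddnu_k(\ddS,\mathbf{g})$. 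Expanding $\det M=\sum_{\sigma}\operatorname{sgn}(\sigma)\prod_{i=0}^n q_{\sigma(i)}(g_i(t);t)$ and applying the ultrametric inequality gives
\[
\dnu(\det M)\ge \min_{\sigma}\ \sum_{i=0}^n \dnu\bigl(q_{\sigma(i)}(g_i(t);t)\bigr).
\]
The key input is now \eqref{eqn:equality-qk}, which asserts $\min_{f(t)\in\ddS}\dnu(q_j(f(t);t))=\ddnu_j(\ddS)$, so each factor satisfies $\dnu(q_{\sigma(i)}(g_i(t);t))\ge \ddnu_{\sigma(i)}(\ddS)$. Since $\sigma$ permutes $\{0,\dots,n\}$, summing yields $\sum_i \dnu(q_{\sigma(i)}(g_i;t))\ge \sum_{j=0}^n \ddnu_j(\ddS)$ for every $\sigma$, whence $\sum_{k=1}^n \ddnu_k(\ddS,\mathbf{g})=\dnu(\det M)\ge\sum_{j=0}^n \ddnu_j(\ddS)=\sum_{k=1}^n \ddnu_k(\ddS)$, the last step using $\ddnu_0(\ddS)=0$.

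For the equality assertion, suppose $(g_i(t))_{i=0}^N$ is the initial segment of a $\tp$-ordering. Then for each $k\le N$ the element $g_k(t)$ is a valid greedy choice, so by Definition \ref{t-ordering-definition} and Theorem \ref{thm:invariant} one has $\ddnu_k(\ddS,\mathbf{g})=\ddnu_k(\ddS)$ for $0\le k\le N$; summing term by term gives equality in \eqref{eqn:superfactorial-integer} for every $n\le N$. The main obstacle is the first, structural step: realizing the partial sum as $\dnu(\det M)$ so that a single Laplace expansion, combined with the per-polynomial minimum \eqref{eqn:equality-qk}, delivers the bound for all permutations simultaneously. Everything afterward is the valuation arithmetic of $\QQ[[t]]$ and the Vandermonde identity, which are routine; the only care needed is the bookkeeping matching the pair-sum to the determinant and the degenerate case of repeated $g_i(t)$, where both sides equal $+\infty$.
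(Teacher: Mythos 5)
Your proof is correct, but it takes a genuinely different route from the paper's. The paper exploits the permutation-invariance of the pair sum $\sum_{0\le i<j\le n}\dnu(g_j(t)-g_i(t))$: it reorders $g_0(t),\dots,g_n(t)$ into a $t$-ordering $\mathbf{g}'$ of the finite set $V=\{g_k(t)\}_{0\le k\le n}$, identifies the sum with $\sum_{k=1}^n\ddnu_k(V)$ via Theorem \ref{thm:invariant} applied to $V$, and then invokes the set-monotonicity result (Theorem \ref{thm:subset-decreasing}, $V\subseteq\ddS$ implies $\ddnu_k(V)\ge\ddnu_k(\ddS)$), which is itself a consequence of the max-min characterization. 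You instead realize the pair sum as $\dnu$ of a Vandermonde determinant, expand that determinant in the basis $q_j(x;t)$ attached to a fixed $t$-ordering of $\ddS$ (the unipotent change of basis keeping the determinant intact), and then combine the Leibniz expansion, the ultrametric inequality, and the single-polynomial minimum \eqref{eqn:equality-qk} to bound every permutation term at once. Your argument needs only \eqref{eqn:equality-qk} from Theorem \ref{thm:max-min} and avoids both the re-ordering step and Theorem \ref{thm:subset-decreasing}; it is essentially Bhargava's classical determinant technique, and it has the side benefit of making Theorem \ref{thm:bhar10revisited} (divisibility of products of pairwise differences) transparent. The paper's route is shorter given the machinery already in place and exposes the inequality as an instance of monotonicity under set inclusion. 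Your treatment of the equality case (extend the initial segment to a full $t$-ordering, note that $\ddnu_k(\ddS,\mathbf{g})$ depends only on $g_0(t),\dots,g_k(t)$, and apply Theorem \ref{thm:invariant} term by term) is correct and, if anything, cleaner than the paper's terse ``take $V=U$'' remark.
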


\begin{proof}
For any $\ddS$-test sequence $\mathbf{g}$,  we have by definition
\begin{align}\label{eqn:test-double-sum}
\sum_{k=1}^n \ddnu_{k}(\ddS,  \mathbf{g}) &= \sum_{k=1}^n \sum_{j=0}^{k-1} \dnu(g_k(t) - g_j(t)).
\end{align}
We then obtain
\begin{align}\label{eqn:permutation-inv}
\sum_{k=1}^n \ddnu_{k}(\ddS,  \mathbf{g}) &= \frac{1}{2} \bigg( \sum_{ \{(j, k): j \ne k, 0 \le j, k \le n\}} \dnu(g_k(t) - g_j(t))\bigg)
\end{align}
since $\dnu(g_k(t)- g_j(t)) = \dnu(g_j(t)- g_k(t))$. Since the right side of \eqref{eqn:permutation-inv} is 
invariant under permutations $ \sigma \in S_n$ sending  $g_k(t)$ to $g_{\sigma(k)}(t)$, 
the  sum $\sum_{k=1}^n \ddnu_{k}(\ddS,   \mathbf{g})$ is independent of the order
of the elements in the finite set $V= \{ g_k(t)\}_{0 \le k \le n}$.

Now  let $\mathbf{g}' := (g_0^{'}, g_1^{'}, \dots , g_n^{'})$ be a permutation of these elements that is a
$t$-ordering for the finite set $V$. We deduce
$\sum_{k=1}^n \ddnu_{k}(\ddS,  \mathbf{g}) = \sum_{k=1}^n \ddnu_{k}(V,   \mathbf{g}').$
Thus 
\begin{equation}\label{eqn:UV}
\sum_{k=1}^n \ddnu_{k}(\ddS,  \mathbf{g}) =  \sum_{k=1}^n \ddnu_{k}(V)  \ge  \sum_{k=1}^n \ddnu_{k}(\ddS),
\end{equation}
the left  equality holding by Theorem \ref{thm:invariant}, applied with $\ddS= V$, since $\mathbf{g}'$ is
a $t$-ordering,  and the  inequality on the right holding 
term by term using  Theorem \ref{thm:subset-decreasing}, with $\ddS_1=V$ and $\ddS_2= \ddS$, 
since $V \subseteq \ddS$.

Finally, we see that equality holds  in  \eqref{eqn:superfactorial-integer} whenever the   $\ddS$-test sequence
$\{ s_i: 0 \le i \le n\}$ is the
initial part of a $t$-ordering of $\ddS$, since we may take $V=U$ in \eqref{eqn:UV}.
\end{proof}

\begin{remark}
For fixed $N\ge 1$ and the inequalities  in Theorem \ref{thm:majorization}  for $1 \le k \le N$ assert:
For  any $\ddS$-test sequence $\mathbf{g}= ( g_i(t))_{i = 0}^\infty$,
the sequence of initial $t$-exponents  $\{ \ddnu_k(\ddS,  \mathbf{g})\}_{k=1}^N$
{ \em weakly majorizes} the sequence of initial 
$t$-exponents $( \ddnu_k(\ddS))_{k=1}^N$ 
  attached to $\ddS$.  However the  associated $t$-exponent sequences  for general test sequences
 need  not satisfy the  majorization condition of equal sums at step $N$.
 Majorization inequalities have 
 geometric interpretations
 and many applications, see the books of 
Marshall and Olkin \cite{MO:78}, and Marshall, Olkin and Arnold  \cite{MOA:11}.
\end{remark}

%
%
\section{Congruence-preserving mappings}\label{sec:5} 

In this section we prove the main result (Theorem \ref{thm:well-definedness})
for each 
$b \ge 2$, by constructing and using an embedding by
a ``congruence preserving" map  from $\ZZ$ into the ring $\ZZ[[t]]$. We view $\ZZ[[t]]$
as a subring of the Dedekind domain $\ddR[[t]]$, to which the results of Section \ref{sec:Bhargava-Z[[t]]} apply.
The  cases $b=0$ and $b=1$ are  handled directly.

%
%
\subsection{Congruence-preserving property for  ideals $(b)$ of $\mathbb{Z}$}\label{subsec:proofs-Z}

Let $\bb$ be a nonzero proper  ideal of $\mathbb{Z}$, so $\bb= (b)$ for some integer $b \ge 2$.
We construct a congruence-preserving  map of $\ZZ$ into $\QQ[[t]]$, using base $b$ digit expansions. By
a {\em congruence-preserving map}$\pmod{b}$ we mean a mapping $\varphi_b: \ZZ \to \QQ[[t]]$ such that
for all $a_1, a_2 \in \ZZ$, 
\begin{equation}\label{eqn:equal-valuations1}
\dnu\left(\varphi_b\left(a_1\right)-\varphi_b\left(a_2\right)\right)=\order_b\left(a_1-a_2\right).
\end{equation}
Equivalently,
for all $a_1, a_2 \in \mathbb{Z}$,
\begin{equation}\label{map-condition}
\varphi_b\left(a_1\right)\equiv\varphi_b\left(a_2\right)\pmod{t^k}\quad\text{if and only if}\quad a_1\equiv a_2\pmod{b^k}
\end{equation}
for all $k\ge1$.

%
%
\begin{prop}\label{prop:varphi_B-exists-for-Z}
Given $b \ge 2$, let   $\varphi_b:\mathbb{Z}\to\mathbb{Q}[[t]]$ be defined 
for each $a \in \mathbb{Z}$ by
\begin{equation}\label{eqn:definition-Z-digits-1}
\varphi_b(a)=f_{a,b}(t):=\sum_{k=0}^\infty d_kt^k,
\end{equation}
where $a = \sum_{k \ge0} d_k b^k$ is the $b$-adic expansion of $a$, using the  
digit set $\sD = \{ 0, 1, 2, \ldots, b-1\}$, 
which is  computed recursively by 
\begin{equation}\label{eqn:definition-Z-digits-2}
d_k=d_k(a,b):=\left\lfloor\frac{a}{b^k}\right\rfloor-b\left\lfloor\frac{a}{b^{k+1}}\right\rfloor.
\end{equation}
Then $\varphi_b(\cdot)$ is a congruence-preserving map$\pmod{b}$ of the ring $\ZZ$ to
the formal power series ring $\QQ[[t]]$, whose image is contained in the subring $\ZZ[[t]]$.
\end{prop}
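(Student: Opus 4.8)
The plan is to work through the equivalent formulation \eqref{map-condition}, which reduces the whole statement to a fact about the low-order base-$b$ digits of an integer. First I would record the elementary properties of the recursion \eqref{eqn:definition-Z-digits-2}: writing $q_k(a):=\lfloor a/b^k\rfloor$, the nested-floor identity gives $q_{k+1}(a)=\lfloor q_k(a)/b\rfloor$, and hence $d_k(a,b)=q_k(a)-b\lfloor q_k(a)/b\rfloor=q_k(a)\bmod b$, so every digit lies in $\sD=\{0,1,\dots,b-1\}$. Since each $d_k\in\ZZ$, the series $\sum_k d_k t^k$ is a well-defined element of $\ZZ[[t]]$, which already yields the containment of the image of $\varphi_b$ in $\ZZ[[t]]$. (For $a\ge0$ the expansion terminates, while for $a<0$ the digits are eventually $b-1$; this is harmless because $\varphi_b(a)$ is only ever a formal power series.)

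The core step is a digit-stability lemma: for every $a\in\ZZ$ and every $k\ge1$, the initial block of digits $d_0(a,b),\dots,d_{k-1}(a,b)$ depends only on the residue $a\bmod b^k$. I would prove this by writing $a=b^k m+r$ with $0\le r<b^k$ and observing that for $0\le j\le k-1$ one has $q_j(a)=b^{k-j}m+q_j(r)$; since $b\mid b^{k-j}m$ in this range, reduction modulo $b$ gives $d_j(a,b)=d_j(r,b)$. This identifies the digit vector of $a$ with that of its canonical representative $r\in\{0,\dots,b^k-1\}$, and because $r=\sum_{j=0}^{k-1}d_j(r,b)\,b^j$ recovers $r$ from those digits, the map $r\mapsto(d_0(r,b),\dots,d_{k-1}(r,b))$ is injective, hence a bijection $\{0,\dots,b^k-1\}\to\sD^k$ by counting.

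With the lemma established, the equivalence \eqref{map-condition} is immediate: $\varphi_b(a_1)\equiv\varphi_b(a_2)\pmod{t^k}$ says precisely that $a_1$ and $a_2$ share their first $k$ digits, which by the bijection holds if and only if $a_1\equiv a_2\pmod{b^k}$. Finally I would pass from \eqref{map-condition} to the valuation identity \eqref{eqn:equal-valuations1}: since $\dnu(g)\ge k$ iff $t^k\mid g$, and $\order_b(n)\ge k$ iff $b^k\mid n$, applying \eqref{map-condition} with $g=\varphi_b(a_1)-\varphi_b(a_2)$ and $n=a_1-a_2$ for every $k\ge1$ forces the two suprema to agree, giving $\dnu(\varphi_b(a_1)-\varphi_b(a_2))=\order_b(a_1-a_2)$.

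I expect the only genuinely delicate point to be the treatment of negative $a$, where the base-$b$ expansion is infinite. The digit-stability lemma is exactly what finesses this: it lets me replace $a$ by the nonnegative representative $r$ of $a\bmod b^k$ before inspecting any finite block of digits, so no convergence question in $\RR$ ever enters, and every digit computation is carried out with a finite base-$b$ string.
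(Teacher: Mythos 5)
Your proof is correct and follows essentially the same route as the paper's: your digit-stability lemma (writing $a=b^km+r$ and using the floor shift $\lfloor a/b^j\rfloor = b^{k-j}m+\lfloor r/b^j\rfloor$) is precisely the paper's argument for the direction $a_1\equiv a_2\pmod{b^k}\Rightarrow\varphi_b(a_1)\equiv\varphi_b(a_2)\pmod{t^k}$, while your injectivity step via $r=\sum_{j=0}^{k-1}d_j(r,b)b^j$ is the same telescoping identity the paper uses for the converse direction. The extra touches you include --- checking that the digits lie in $\{0,\dots,b-1\}$ so the image lands in $\ZZ[[t]]$, and spelling out how \eqref{map-condition} for all $k\ge1$ forces the valuation identity \eqref{eqn:equal-valuations1} --- are points the paper leaves implicit, and they make your writeup slightly more complete without changing the underlying argument.
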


Note that for $a \ge 0$ the  $b$-adic expansion \eqref{eqn:definition-Z-digits-1} of $a$ has finitely many nonzero digits $d_k$, 
while for negative integers $a \le -1$, it has  infinitely many nonzero digits in its $b$-adic expansion,
so  the image of the map $\varphi_b (\cdot)$  includes infinite power series.

\begin{proof}

($\Rightarrow$) Suppose that $\varphi_b\left(a_1\right)\equiv\varphi_b\left(a_2\right)\pmod{t^k}$. That is,
$d_\ell \left(a_1,b\right)=d_\ell\left(a_2,b\right)$ for all $\ell=0,\dots,k-1$. From the identity
$$
a-b^k\left\lfloor\frac{a}{b^k}\right\rfloor=\sum_{\ell=0}^{k-1}d_\ell(a,b)b^\ell,
$$
we see that $a_1-b^k\left\lfloor\frac{a_1}{b^k}\right\rfloor=a_2-b^k\left\lfloor\frac{a_2}{b^k}\right\rfloor$. We deduce that  
$a_1\equiv a_2\pmod{b^k}$.

($\Leftarrow$) Suppose that $a_1\equiv a_2\pmod{b^k}$. That is, $a_1-a_2=b^kq$ for some $q\in\mathbb{Z}$. So for $\ell=0,\dots,k-1$,
\begin{align*}
d_\ell \left(a_1,b\right)&=\left\lfloor\frac{a_1}{b^\ell}\right\rfloor-b\left\lfloor\frac{a_1}{b^{\ell+1}}\right\rfloor\\
&=\left(\left\lfloor\frac{a_2}{b^\ell}\right\rfloor+b^{k-\ell}q\right)-b\left(\left\lfloor\frac{a_2}{b^{\ell+1}}\right\rfloor+b^{k-\ell-1}q\right)\\
&=\left\lfloor\frac{a_2}{b^\ell}\right\rfloor-b\left\lfloor\frac{a_2}{b^{\ell+1}}\right\rfloor=d_\ell\left(a_2,b\right).
\end{align*}
Hence $\varphi_b\left(a_1\right)\equiv\varphi_b\left(a_2\right)\pmod{t^k}$.
\end{proof}

%
%
\begin{rem}\label{rem:41}

(1) The  congruence-preserving maps $\varphi_{b}$   from  
the ring $\ZZ$ to  $\QQ[[t]]$ are not homomorphisms for  either of 
the  ring operations: ring  addition and  ring multiplication. 
They do preserve both $0$ and $1$.

(2) 
In general a congruence-preserving map (property \eqref{eqn:equal-valuations1}) need not preserve $0$ or $1$.
The equivalent condition \eqref{map-condition}
 for all $k \ge 1$ implies  that  any congruence-preserving map $\varphi_{b}$ must be  injective, because
$ \bigcap_{k=0}^{\infty} t^k \QQ[[t]] = \{0\}$
holds.
\end{rem}

%
%
\subsection{Well defined $b$-exponent sequence invariants for  $\ZZ$}\label{subsec:52}

%
%
\begin{thm}
\label{thm:main-propertyCZ}
For fixed $b \ge 2$,  for any nonempty set $\SSS$ of $\ZZ$, all $b$-orderings $\mathbf{a}$ of $\SSS$ give the same values
$\ivr_{k}(\SSS, b, \mathbf{a})$, 
so the associated $b$-exponent sequence $\alpha_k(\SSS, b)$ is well-defined
for each $k \ge 1$.
 In addition
 \begin{equation}\label{eqn:equal-inv}
\ivr_{k} \left(\SSS, b \right)= \ivr_k ( \varphi_{b}(\SSS)),
\end{equation} 
holds for all $k \ge 1.$
\end{thm}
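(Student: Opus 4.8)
The plan is to deduce everything from the congruence-preserving map $\varphi_b$ constructed in Proposition~\ref{prop:varphi_B-exists-for-Z} together with the already-established well-definedness for $t$-orderings, Theorem~\ref{thm:invariant}. The governing identity is \eqref{eqn:equal-valuations1}, namely $\dnu(\varphi_b(a_1)-\varphi_b(a_2))=\order_b(a_1-a_2)$ for all $a_1,a_2\in\ZZ$. First I would observe that since $\order_b(a-a')=\ordr_b(a-a')$ for integers (both count the largest power of $b$ dividing the difference), this identity says $\varphi_b$ carries the $\ordr_b$-geometry on $\ZZ$ isometrically to the $\dnu$-geometry on $\QQ[[t]]$. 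Because the defining minimization \eqref{eqn:b-orderings} for a $b$-ordering and the minimization in Definition~\ref{t-ordering-definition} for a $t$-ordering both involve only sums of these valuations of pairwise differences, the map must send $b$-orderings of $\SSS$ exactly to $t$-orderings of $U:=\varphi_b(\SSS)$.

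The key steps, in order, would be as follows. Let $\mathbf{a}=(a_i)_{i=0}^\infty$ be any $b$-ordering of $\SSS$ and set $f_i(t):=\varphi_b(a_i)$, giving a $U$-test sequence $\mathbf{f}=(f_i(t))_{i=0}^\infty$ in $\QQ[[t]]$. Using \eqref{eqn:equal-valuations1} termwise, for every $i$ and every candidate $a'\in\SSS$ with image $f'=\varphi_b(a')$,
\begin{equation*}
\sum_{j=0}^{i-1}\dnu\left(f'-f_j(t)\right)=\sum_{j=0}^{i-1}\ordr_b\left(a'-a_j\right).
\end{equation*}
Since $\varphi_b$ is a bijection from $\SSS$ onto $U$ (injectivity is Remark~\ref{rem:41}(2)), minimizing the left side over $f'\in U$ is the same problem as minimizing the right side over $a'\in\SSS$; hence $\mathbf{a}$ attains the minimum \eqref{eqn:b-orderings} at each step if and only if $\mathbf{f}$ attains the minimum in Definition~\ref{t-ordering-definition} at each step. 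Therefore $\mathbf{f}$ is a $t$-ordering of $U$, and moreover the same identity with $a'=a_i$ yields the invariant-matching
\begin{equation*}
\ivr_i(\SSS,b,\mathbf{a})=\sum_{j=0}^{i-1}\ordr_b(a_i-a_j)=\sum_{j=0}^{i-1}\dnu(f_i(t)-f_j(t))=\ddnu_i(U,\mathbf{f}).
\end{equation*}

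To finish, I would invoke Theorem~\ref{thm:invariant}: the right-hand quantity $\ddnu_i(U,\mathbf{f})$ is independent of the chosen $t$-ordering $\mathbf{f}$ of $U$, so it equals the well-defined invariant $\ddnu_i(U)=\ivr_i(\varphi_b(\SSS))$. Since any two $b$-orderings $\mathbf{a}_1,\mathbf{a}_2$ of $\SSS$ map to two $t$-orderings of $U$ and each yields $\ivr_i(\SSS,b,\mathbf{a}_\bullet)=\ddnu_i(U)$, the $b$-exponent sequence is well-defined, and \eqref{eqn:equal-inv} holds. I expect the main obstacle to be not any single hard estimate but the careful bookkeeping needed to confirm that the greedy minimization problems genuinely correspond under $\varphi_b$: one must check that $\varphi_b$ is onto $U$ and injective so that the two $\min$ operations range over matched index sets, and that the infinite-digit images of negative integers (noted after Proposition~\ref{prop:varphi_B-exists-for-Z}) cause no trouble, which they do not since only finite sums of valuations of differences ever appear. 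A secondary subtlety is the possibility of $+\infty$ values (when $a_i=a_j$ for repeated elements, giving $\dnu(0)=+\infty$); these are handled uniformly because \eqref{eqn:equal-valuations1} preserves the value $+\infty$ as well, the conventions of Definition~\ref{def:test-seq} matching those of the valuation $\dnu$.
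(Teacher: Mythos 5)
Your proposal is correct and follows essentially the same route as the paper: push a $b$-ordering of $\SSS$ through the congruence-preserving map $\varphi_b$ of Proposition~\ref{prop:varphi_B-exists-for-Z}, verify via \eqref{eqn:equal-valuations1} that the image is a $t$-ordering of $U=\varphi_b(\SSS)$ with matching exponent values, and then invoke Theorem~\ref{thm:invariant} to conclude well-definedness and \eqref{eqn:equal-inv}. The only cosmetic difference is that you argue a biconditional correspondence of the two minimization problems (citing injectivity, which is not actually needed since surjectivity of $\varphi_b$ onto its image already matches the value sets), whereas the paper proves only the one implication it uses.
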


\begin{proof}
We are given nonempty $S$ and  $b \ge 2$ and set $U=\varphi_b(S)=\{ \varphi_b(a'): \, a' \in \SSS\} \subset \ddR[[t]]$.
Let $\mathbf{a}= (a_i)_{ 0 \le i \le |\SSS|}$ be a $b$-ordering of $\SSS$, and set 
$u_i = \varphi_{b}(a_i)$.  We claim that $\mathbf{u}= (u_i)_ {0 \le i < |U|} =\varphi_{b}(\mathbf{a})$
is a $t$-ordering of $U$.
To see this, for any  $u' \in U$, there is  some $a'\in S$ with  $u' = \varphi_{b}(a')$, 
and we have 
\begin{eqnarray*}
\sum_{j=0}^{k-1} \dnu \left( u' - u_j\right)   
&=&  \sum_{j=0}^{k-1} \ordr_b \left( a' - a_j  \right)\\
&\ge& \sum_{j=0}^{k-1} \ordr_b \left( a_k - a_j  \right)
= \sum_{j=0}^{k-1} \dnu \left( u_k - u_j  \right),\\
\end{eqnarray*} 
with the first and last equalities  holding by the congruence preserving property, 
and the  inequality  by
the  $b$-ordering property. Thus $\mathbf{u}$ is a $t$-ordering of $U$.

By Theorem \ref{thm:invariant} 
the invariant $\ivr_k (U)=  \ivr_k(U, \mathbf{u})$ is independent of the
choice of  $t$-ordering ${\bf u}$ of $U$. Thus
\begin{equation}
\label{eqn:independence-of-a} 
\ivr_k(\SSS, b, {\bf a}) 
= \sum_{j=1}^{k-1} \ordr_b \left( a_k - a_j  \right) =  \sum_{j=0}^{k-1} \dnu \left( \varphi_{b}(a_k) - \varphi_{b} (a_j) \right)=  \alpha_k (U ).
\end{equation}
Since $\mathbf{a}$ is an arbitrary $b$-ordering of $\SSS$,
 we conclude that $\ivr_k(\SSS, b, {\bf a}) $ is independent of the choice of $b$-ordering, and by \eqref{eqn:independence-of-a} 
  its value is $\ivr_k(\varphi_b(\SSS)).$
\end{proof}

%
%
\subsection{Properties of $b$-exponent sequence invariants}\label{section:B-orderings-properties}

The next proposition gives three  consequences of Theorem \ref{thm:main-propertyCZ}.
By \eqref{eqn:equal-inv} $b$-exponent sequences inherit all the properties of $\tp$-exponent sequences.

%
%
\begin{prop}\label{prop:4-prop}

{\em (1)} For any subset $\SSS$ of $\ZZ$ the associated $b$-exponent sequence 
$\left(\ddnu_k\left(\SSS,b \right)\right)_{k=0}^\infty$ is weakly increasing, i.e.
$\ddnu_{k+1} \left(\SSS,b \right) \ge \ddnu_{k} \left(\SSS,b \right)$, for all $k\ge0$.

{\em (2)} For any subset $\SSS$ of $\ZZ$ and any nonnegative integers $k$ and $\ell$, we have
$$
\ddnu_{k+\ell}\left(\SSS,b \right)\ge\ddnu_k\left(\SSS,b \right)+\ddnu_\ell\left(\SSS,b \right).
$$

{\em (3)} Suppose that $\SSS_1\subseteq\SSS_2\subseteq \ZZ$.
 Then $\ddnu_k\left(\SSS_1,b \right)\ge\ddnu_k\left(\SSS_2,b\right)$ holds  for every nonnegative integer $k$.

\end{prop}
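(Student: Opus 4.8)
The plan is to deduce all three assertions directly from the transfer identity \eqref{eqn:equal-inv} of Theorem \ref{thm:main-propertyCZ}, which identifies the $b$-exponent sequence invariants of a nonempty $\SSS \subseteq \ZZ$ with the $t$-exponent sequence invariants of the image set $U := \varphi_b(\SSS) \subseteq \QQ[[t]]$: namely $\ivr_k(\SSS, b) = \ivr_k(\varphi_b(\SSS)) = \ivr_k(U)$ for all $k \ge 0$. Once this identification is in hand, each part follows by quoting the corresponding structural result about $t$-exponent sequences already proved in Section \ref{sec:Bhargava-Z[[t]]}, so the proposition is essentially a formal corollary of the congruence-preserving reduction.

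For part (1), I would set $U = \varphi_b(\SSS)$ and invoke Lemma \ref{lem:t-sequence-increasing}, which asserts that the $t$-exponent sequence $(\ivr_k(U))_{k=0}^\infty$ is nondecreasing; applying \eqref{eqn:equal-inv} at indices $k$ and $k+1$ then gives $\ivr_{k+1}(\SSS, b) = \ivr_{k+1}(U) \ge \ivr_k(U) = \ivr_k(\SSS, b)$. For part (2), with the same $U$, Theorem \ref{thm:binomial} supplies the superadditivity $\ivr_{k+\ell}(U) \ge \ivr_k(U) + \ivr_\ell(U)$, and translating each of the three terms through \eqref{eqn:equal-inv} yields the claimed inequality $\ivr_{k+\ell}(\SSS, b) \ge \ivr_k(\SSS, b) + \ivr_\ell(\SSS, b)$.

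For part (3), the one extra observation needed is that $\varphi_b$ respects set inclusion: if $\SSS_1 \subseteq \SSS_2$, then trivially $U_1 := \varphi_b(\SSS_1) \subseteq \varphi_b(\SSS_2) =: U_2$, since the image of a subset under any map is contained in the image of the whole (injectivity of $\varphi_b$ is not even required here). Then Theorem \ref{thm:subset-decreasing}, applied to $U_1 \subseteq U_2 \subseteq \QQ[[t]]$, gives $\ivr_k(U_1) \ge \ivr_k(U_2)$, and \eqref{eqn:equal-inv} converts this into $\ivr_k(\SSS_1, b) \ge \ivr_k(\SSS_2, b)$.

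I do not expect any genuine obstacle at this stage: the substantive work — the max-min characterization of Theorem \ref{thm:max-min} and the congruence-preserving embedding of Theorem \ref{thm:main-propertyCZ} — has already been completed, and the present statement merely pulls those results back along $\varphi_b$. The only points demanding even momentary care are the inclusion-preservation used in (3) and the bookkeeping that \eqref{eqn:equal-inv} holds for every $k$ with values in $\NN \cup \{+\infty\}$, so that the inequalities remain meaningful even when some invariants are infinite (which happens precisely once $k \ge |\SSS|$).
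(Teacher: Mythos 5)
Your proposal is correct and follows essentially the same route as the paper: all three parts are deduced by applying Lemma \ref{lem:t-sequence-increasing}, Theorem \ref{thm:binomial}, and Theorem \ref{thm:subset-decreasing} respectively to $\ddS=\varphi_b(\SSS)$ (or $\ddS_i=\varphi_b(\SSS_i)$ for part (3)), and then translating via the identity \eqref{eqn:equal-inv} of Theorem \ref{thm:main-propertyCZ}. The paper's proof is exactly this transfer argument, including the observation that $\varphi_b$ carries the inclusion $\SSS_1\subseteq\SSS_2$ to $\varphi_b(\SSS_1)\subseteq\varphi_b(\SSS_2)$.
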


\begin{proof}

(1) Applying Lemma \ref{lem:t-sequence-increasing} with $\ddS=\varphi_b\left(\SSS\right)$, we see that 
the associated $\tp$-exponent sequence $\left(\ddnu_k\left(\varphi_b\left(\SSS\right)\right)\right)_{k=0}^\infty$ is weakly increasing.
This sequence is the associated $b$-exponent sequence $\left(\ddnu_k\left(\SSS,b\right)\right)_{k=0}^\infty$ by 
\eqref{eqn:equal-inv} in Theorem \ref{thm:main-propertyCZ}. 

(2)  Applying Theorem \ref{thm:binomial} with $\ddS=\varphi_b\left(\SSS\right)$, we obtain
$$
\ddnu_{k+\ell}\left(\varphi_b \left(\SSS\right)\right)\ge \ddnu_k\left(\varphi_b \left(\SSS\right)\right)+\ddnu_\ell\left(\varphi_b \left(\SSS\right)\right).
$$
By  \eqref{eqn:equal-inv} in Theorem \ref{thm:main-propertyCZ}, the above yields
 $\ddnu_{k+\ell}\left(\SSS,b\right)\ge\ddnu_k\left(\SSS,b\right)+\ddnu_\ell\left(\SSS,b \right)$.

(3) Applying Theorem \ref{thm:subset-decreasing} with $\ddS_1=\varphi_b\left(\SSS_1\right)$ and 
$\ddS_2=\varphi_b\left(\SSS_2\right)$, we obtain
$$
\ddnu_k\left(\varphi_b\left(\SSS_1\right)\right)\ge\ddnu_k\left(\varphi_b\left(\SSS_2\right)\right).
$$
By \eqref{eqn:equal-inv} in Theorem \ref{thm:main-propertyCZ},
 the above is $\ddnu_k\left(\SSS_1,b\right)\ge\ddnu_k\left(\SSS_2,b\right)$.
\end{proof}
%
%
\subsection{Proof of Main Theorem ~\ref{thm:well-definedness}}\label{subsec:proofs-53}

\begin{proof}[Proof~of~Theorem~\ref{thm:well-definedness}]
For $b \ge 2$ the  result follows from  Theorem \ref{thm:main-propertyCZ} obtained
using the map $\varphi_{b} (\cdot)$  given in  Proposition  \ref{prop:varphi_B-exists-for-Z}.

It remains to  treat the cases $b=0$ and $b=1$. 
We use the  modified definition
\begin{equation}\label{eqn:ord-b2}
\ordr_{b}(a) :=\sup \{k \in \NN \, : \,  a\ZZ \subseteq  b^k\ZZ\},
\end{equation}
for all  $b \ge 0$ and all $a \in \ZZ$.
This definition agrees with \eqref{eqn:ord-b} when $b \ge 2$. 
Combining it with the convention $0^0=1$ yield for $b=0$,
$\ordr_0(a) = +\infty$ when $a=0$, $\ordr_0(a)= 0$ when $a \ne 0$,
 and  for  $b=1$ yield   $\ordr_1(a)= +\infty$ for all $a \in \ZZ$.

For $b=0$ the
$b$-orderings $\mathbf{a}= (a_i)_{i=0}^{\infty}$ consist of any sequence  of distinct elements in $\SSS$,
which can be done  for $0 \le i < |\SSS|$, while elements $a_i$ for $i \ge |\SSS|$ may be chosen arbitrarily.
 All such sequences  have the same $0$-exponent sequence:
\begin{equation*}
\alpha_k( \SSS, 0) = \begin{cases} 
   0 \quad \mbox{for} \quad 0 \le k < |\SSS|\\
 +\infty \quad \mbox{for} \quad k \ge  |\SSS|,\\
\end{cases}
\end{equation*}
proving Theorem \ref{thm:well-definedness} for $b=0$.

For $b=1$ all  sequences of elements in $\SSS$ are  $1$-orderings. They all have
 $1$-exponent sequence $\alpha_0(S,1)=0$ and
\begin{equation*}
\alpha_k( \SSS, 1) = 
+\infty \quad  \mbox{for all} \quad k \ge 1,
\end{equation*}
proving Theorem \ref{thm:well-definedness} for $b=1$.
\end{proof} 

\begin{rem}\label{rem:55}
We verify the inequalities   \eqref{eq:extreme} in Section \ref{sec:12}. Let $b \ge 2$.
The  inequalities are  true for $i=0$ by definition, so let $i \ge 1$.
Since $\ivr_{i}(\SSS, 1) = +\infty$ for all $i\ge1$ we have $\ivr_i(\SSS, b) \le \ivr_i(\SSS, 1)$
for all $\SSS$ and all $i\ge 0$. We have $\ivr_i(S,0)=0$ for $0 \le i < |\SSS|$, hence
$\ivr_i(\SSS, 0) \le \ivr_i(\SSS, b)$ for $0 \le i < \SSS$. Now $\ivr_i(\SSS, b) = +\infty$
for all $i \ge |\SSS|$ because all $\SSS$-test sequences of length $i+1$ contain a repeated
element. Hence $\ivr_i(\SSS, 0) \le \ivr_i(\SSS, b)$  holds for $i \ge |\SSS|$.
\end{rem}

%
%
\subsection{Proof of Theorem \ref{thm:majorization-b}}\label{section:B-test-majorization}

\begin{proof}
By definition
\begin{equation}\label{eqn:superfactorial-integer2}
\sum_{k=1}^{n} \alpha_{k}(\SSS, b, \mathbf{s})      :=\sum_{k=1}^{n}\sum_{j=0}^{k-1} \ordr_b\left(s_k-s_j\right)
\end{equation}
We note that, by convention, $\ivr_0( \SSS, b, \mathbf{s}) = \ivr_0(\SSS, b) =0$.

(1) Applying Theorem \ref{thm:majorization} with 
$\ddS=\varphi_b \left(\SSS\right)$ and $g_i(t)=\varphi_b \left(s_i\right)$, we obtain,
using \eqref{eqn:test-double-sum}, 
$$
\sum_{k=1}^{n}\sum_{j=0}^{k-1} \dnu\left(\varphi_b \left(s_k\right)-\varphi_b\left(s_j\right)\right)
\ge \sum_{k=1}^n\ddnu_k\left(\varphi_b \left(\SSS\right)\right).
$$
Applying \eqref{eqn:equal-valuations1}, which says 
$\dnu\left(\varphi_b\left(s_k \right)-\varphi_b\left(s_j \right)\right)=\ordr_b\left( s_k-s_j \right),$
to the left side,  
and  the equality \eqref{eqn:equal-inv} in Theorem \ref{thm:main-propertyCZ}, which says
$\ivr_{k} \left(\SSS, b \right)= \ivr_k ( \varphi_{b}(\SSS))$,
to the right side, 
yields 
\begin{equation*}\label{eqn:majorization-2}
\sum_{k=1}^{m} \alpha_{k}(\SSS, b, \mathbf{s})   
  :=\sum_{k=1}^{n}\sum_{j=0}^{k-1} \ordr_b\left(s_k-s_j\right)
  \ge\sum_{k=1}^m\ddnu_k\left(\SSS,b \right).
\end{equation*}
The case $n= +\infty$ follows as a limit of finite $n$.

(2) The sufficient condition for equality  for $1 \le n \le N$   follows also from Theorem \ref{thm:majorization} as well.
\end{proof}

%
%
\section{Generalized factorials 
 and  generalized  binomial coefficients associated to subsets $\sT$ of $\NN$}\label{sec:6}

This section  treats  generalized integers, 
generalized factorials, and
generalized binomial coefficients 
 for $\ZZ$. 
Generalized integers and generalized binomial coefficients  are, by their definition, rational numbers.
We  prove  results stated in Section \ref{sec:2}, deducing that generalized-integers 
and generalized binomial
coefficient ideals are integers .

%
%
\subsection{Product formulas  for generalized factorials}\label{subsec:62} 

We give product formulas for generalized
integers  and generalized binomial coefficients for arbitrary $(\ZZ, \sT)$,
expressing them as positive rational numbers. 
These product formulas have some factors $b$ that are  composite numbers. Obtaining  prime factorizations 
of these generalized factorials requires making further prime factorization of the composite terms $b$ that occur.
%
%
\begin{lem}\label{lem:61} 
Let $S$ be a nonempty subset of $\ZZ$ 
and let  $\T\subseteq \NN$.
Then for positive integers $1 \le n<\vert S\vert$,
the generalized-integers 
$[n]_{S,\T}$ are given by
\begin{equation}\label{eqn:integer-factorization-formula}
[n]_{S,\T}=\prod_{b\in\T}b^{\ivr_n(S,b)-\ivr_{n-1}(S,b)}.
\end{equation}
In particular, each such $[n]_{S,\T}$ is a positive integer.
\end{lem}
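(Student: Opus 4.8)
The plan is to prove the product formula \eqref{eqn:integer-factorization-formula} by direct computation from the definition \eqref{def:nST} of the generalized-integer $[n]_{S,\T}$, and then to deduce integrality from the monotonicity of $b$-exponent sequences already established in Proposition \ref{prop:4-prop}(1). Concretely, I would first unfold both factorials appearing in $[n]_{S,\T} = \frac{n!_{S,\T}}{(n-1)!_{S,\T}}$ using the defining product \eqref{def:factorials-S-T}, obtaining
\[
[n]_{S,\T} = \frac{\prod_{b\in\T} b^{\ivr_n(S,b)}}{\prod_{b\in\T} b^{\ivr_{n-1}(S,b)}}.
\]
Since $1 \le n < |S|$, both exponents $\ivr_n(S,b)$ and $\ivr_{n-1}(S,b)$ are finite (the $\SSS$-test sequences of length $n+1 \le |S|$ have no forced repetition, so the valuations are finite), which justifies combining the two products term-by-term.

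The next step is to cancel the factors base by base. For each fixed $b \in \T$ the quotient of the two powers is $b^{\ivr_n(S,b) - \ivr_{n-1}(S,b)}$, giving
\[
[n]_{S,\T} = \prod_{b\in\T} b^{\ivr_n(S,b) - \ivr_{n-1}(S,b)},
\]
which is exactly \eqref{eqn:integer-factorization-formula}. The only subtlety here is that one must know each individual exponent is finite so that the subtraction $\ivr_n(S,b) - \ivr_{n-1}(S,b)$ is meaningful (one cannot subtract $+\infty$ from $+\infty$); this is where the hypothesis $n < |S|$ is used. I would also record that the product over $b \in \T$ is effectively finite, since $\ivr_n(S,b) = 0$ for all but finitely many $b$ (indeed the relevant differences $s_i - s_j$ have only finitely many distinct $\ordr_b$-contributions), so the expression is a genuine finite product of prime powers rather than an infinite one.

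Finally, to conclude that $[n]_{S,\T}$ is a \emph{positive integer}, I would invoke Proposition \ref{prop:4-prop}(1), which states that the $b$-exponent sequence $(\ivr_k(S,b))_{k=0}^\infty$ is weakly increasing. This gives $\ivr_n(S,b) - \ivr_{n-1}(S,b) \ge 0$ for every $b$, so each exponent in the product \eqref{eqn:integer-factorization-formula} is a nonnegative integer, and hence $[n]_{S,\T}$ is a product of nonnegative integer powers of positive integers $b \ge 2$ (together with possible factors $b \in \{0,1\}$, handled by the conventions $1^m = 1$ and $0^0 = 1$ with $b^0 = 1$ for all $b$). Therefore $[n]_{S,\T}$ is a positive integer.

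I do not expect a genuine obstacle in this argument; it is essentially bookkeeping once Proposition \ref{prop:4-prop}(1) is available. The one point requiring care is the finiteness of the exponents under the hypothesis $1 \le n < |S|$, ensuring both that the base-by-base cancellation is legitimate and that no $+\infty$ appears to spoil the weak monotonicity subtraction.
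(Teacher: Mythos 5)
Your proposal is correct and follows essentially the same route as the paper: both derive the product formula directly from Definitions \ref{definition:integers-S-T} and \ref{definition:factorials-S-T} by base-by-base cancellation, and both obtain integrality from the weak monotonicity $\ivr_n(S,b) \ge \ivr_{n-1}(S,b)$ of Proposition \ref{prop:4-prop}(1) (via Proposition \ref{prop:varphi_B-exists-for-Z}). Your added care about finiteness of the exponents when $1 \le n < |S|$ and the effective finiteness of the product is sound bookkeeping that the paper leaves implicit.
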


\begin{proof}
The right side of the  formula \eqref{eqn:integer-factorization-formula}
(viewed as a  positive rational number) 
follows from Definition \ref{definition:integers-S-T} and Definition  \ref{definition:factorials-S-T}.
However we  know each term $b^{\ivr_n(S,b)-\ivr_{n-1}(S,b)}$
  is an integer  since 
$\ivr_n(S,b)-\ivr_{n-1}(S,b)\ge 0$
holds for all $n \ge 1$,
by Proposition \ref{prop:varphi_B-exists-for-Z} combined with 
Proposition \ref{prop:4-prop} (1).
\end{proof}

%
%
\begin{lem}\label{lem:62}
Let $S$ be a nonempty subset of  $\ZZ$. Let $\T\subseteq \NN$.
Then for integers $0\le\ell\le k<\vert S\vert$,
the generalized binomial coefficients  $\binom{k}{\ell}_{S,\T}$  are given by
\begin{equation}\label{eqn:binomial-factorization-formula}
\binom{k}{\ell}_{S,\T}=\prod_{b\in\T} b^{\ivr_k(S,b)-\ivr_\ell(S,b)-\ivr_{k-\ell}(S,b)}.
\end{equation}
In particular, each $\binom{k}{\ell}_{S,\T}$ is a positive integer.
\end{lem}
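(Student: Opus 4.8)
The plan is to follow exactly the same two-part structure used in Lemma \ref{lem:61}: first establish the product formula \eqref{eqn:binomial-factorization-formula} as an identity of positive rational numbers by unwinding the definitions, then separately prove that each exponential factor is a nonnegative integer power so that the product is a genuine integer. For the first part, I would simply substitute the factored form \eqref{def:factorials-S-T} of the generalized factorial into Definition \ref{definition:binomial-S-T}. Writing $k!_{S,\T} = \prod_{b\in\T} b^{\ivr_k(S,b)}$ and similarly for $\ell!_{S,\T}$ and $(k-\ell)!_{S,\T}$, the quotient
$$
\binom{k}{\ell}_{S,\T} = \frac{k!_{S,\T}}{\ell!_{S,\T}(k-\ell)!_{S,\T}} = \prod_{b\in\T} b^{\ivr_k(S,b)-\ivr_\ell(S,b)-\ivr_{k-\ell}(S,b)}
$$
is immediate, provided all the $\ivr$ values appearing are finite, which holds since $k<|S|$ forces $\ivr_k(S,b)<+\infty$ (and likewise for the smaller indices) by note (2) following Definition \ref{def:test-seq}. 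This part is purely formal bookkeeping and should present no difficulty.

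The substantive step, and the one I expect to be the main obstacle, is showing that each exponent $\ivr_k(S,b)-\ivr_\ell(S,b)-\ivr_{k-\ell}(S,b)$ is nonnegative, so that every factor $b^{(\cdots)}$ is a positive integer rather than a fraction. This is precisely the superadditivity inequality
$$
\ivr_k(S,b) \ge \ivr_\ell(S,b) + \ivr_{k-\ell}(S,b),
$$
which is the content of Proposition \ref{prop:4-prop} (2) applied with the index decomposition $k = \ell + (k-\ell)$. That proposition is in turn inherited from the corresponding statement Theorem \ref{thm:binomial} for $t$-exponent sequences via the congruence-preserving map and the identification \eqref{eqn:equal-inv}. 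So once the product formula is in hand, nonnegativity of each exponent is a direct citation of Proposition \ref{prop:4-prop} (2), and each $b^{\ivr_k(S,b)-\ivr_\ell(S,b)-\ivr_{k-\ell}(S,b)}$ is a nonnegative integer power of $b$, hence a positive integer.

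Finally I would conclude that $\binom{k}{\ell}_{S,\T}$, being a product over $b\in\T$ of positive integers, is itself a positive integer. One should note the product is effectively finite: each factor with zero exponent contributes $1$, and only finitely many $b$ can have a nonzero exponent since, for instance, $\ivr_k(S,b)=0$ once $b$ is large relative to the gcd structure of the difference set. The deepest input is entirely external to this lemma—it is the superadditivity Theorem \ref{thm:binomial}, proved earlier via the max-min characterization of Theorem \ref{thm:max-min}—so the proof itself is short and parallels Lemma \ref{lem:61} almost verbatim, with $\ivr_n-\ivr_{n-1}\ge 0$ replaced by the superadditive inequality $\ivr_k-\ivr_\ell-\ivr_{k-\ell}\ge 0$.
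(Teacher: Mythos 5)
Your proposal is correct and follows essentially the same route as the paper: the product formula is obtained by substituting Definition \ref{definition:factorials-S-T} into Definition \ref{definition:binomial-S-T}, and the integrality of each factor comes from the superadditivity $\ivr_k(S,b)\ge\ivr_\ell(S,b)+\ivr_{k-\ell}(S,b)$ of Proposition \ref{prop:4-prop} (2), inherited from Theorem \ref{thm:binomial} via the congruence-preserving map of Proposition \ref{prop:varphi_B-exists-for-Z}. Your additional remarks on finiteness of the nonzero exponents are sound but not needed beyond what the paper records.
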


\begin{proof}
The formula \eqref{eqn:binomial-factorization-formula}, with 
the right side viewed as a positive rational number, 
follows from Definition \ref{definition:binomial-S-T} and Definition \ref{definition:factorials-S-T}.
We  know each term $b^{\ivr_k(S,b)-\ivr_\ell(S,b)-\ivr_{k-\ell}(S,b)}$
 is an integer  since 
$\ivr_k(S,b)-\ivr_\ell(S,b)-\ivr_{k-\ell}(S,b) \ge 0$,
holds for all $k \ge \ell \ge 0$ by Proposition \ref{prop:varphi_B-exists-for-Z} combined with 
Proposition \ref{prop:4-prop} (2).
\end{proof}

%
%
\subsection{Proofs of inclusion  and integrality properties}\label{subsec:61} 

We prove Proposition \ref{prop:factorial-ordering}, Theorem \ref{thm:integer-integer} and
Theorem \ref{thm:binomial-integer}.
%
%
\begin{proof}[Proof~of~Proposition~\emph{\ref{prop:factorial-ordering}}]
(1) The divisibility property  holds because
$$
k!_{S,\T_2}=k!_{S,\T_2\backslash\T_1}k!_{S,\T_1},
$$
and the generalized factorials $k!_{S, \T}$ are 
integers.

(2) 
Proposition \ref{prop:varphi_B-exists-for-Z}  
shows that the hypotheses of Proposition \ref{prop:4-prop} (3)
hold, hence
there holds 
$$\ivr_k\left(S_1, b\right)\ge\ivr_k\left(S_2, b\right)$$
 for each $b \in\NN$.
  Since
$$
\frac{k!_{S_1,\T}}{k!_{S_2,\T}}=\prod_{b\in\T}b^{\ivr_k\left(S_1,b\right)-\ivr_k\left(S_2,b\right)},
$$
the right side  is an integer.
\end{proof}
%
%

\begin{proof}[Proof~of~Theorem~\emph{\ref{thm:integer-integer}}]
It follows from Proposition \ref{prop:varphi_B-exists-for-Z} and Proposition \ref{prop:4-prop} (1)
 that
$$
\ivr_n(S,b)\ge\ivr_{n-1}(S,b)\quad\mbox{for all}\quad b\in \NN.
$$
The result that $[n]_{\SSS, \sT}$ is a positive  integer follows from Lemma \ref{lem:61} using
\eqref{eqn:integer-factorization-formula}.
\end{proof}

%
%

\begin{proof}[Proof~of~Theorem~\emph{\ref{thm:binomial-integer}}]
It follows from Proposition \ref{prop:varphi_B-exists-for-Z} and Proposition \ref{prop:4-prop} (2) 
that
$$
\ivr_k(\SSS,b)\ge\ivr_\ell(\SSS,b)+\ivr_{k-\ell}(\SSS,b)\quad\mbox{for all}\quad b\in \NN.
$$
The result that $\binom{k}{\ell}_{\SSS,\T}$ is a positive  integer  then follows from Lemma \ref{lem:62} using 
\eqref{eqn:binomial-factorization-formula}.
\end{proof}

%
%
\section{The  case $\SSS= \ZZ$ }\label{sec:7} 

We treat generalized factorials, generalized binomial coefficients and generalized integers
for the special case $\SSS=\ZZ$, for the full set $\sT= \NN$
of allowed bases $b$. 

In Appendix \ref{appendix:tables} we give tables of the first few values of generalized integers, generalized factorials
and generalized binomial coefficients for $(\SSS, \sT)= (\ZZ, \NN)$.

%
%

\subsection{Simultaneous $b$-orderings for $(S,\T)=(\mathbb{Z},\NN)$}
\label{subsec:71}

The first result extends  Bhargava's theorem on simultaneous $p$-ordering for $S= \NN$ 
(Theorem \ref{thm:Bhargava-simultaneous-p-ordering}) to simultaneous $b$-ordering.

%
%

\begin{thm}\label{thm:simultaneous-b-ordering1}
The natural ordering $0,1,2,\dots$ of the nonnegative integers  is a $b$-ordering of $\SSS=\mathbb{Z}$ for all $b\in \NN$
simultaneously.
\end{thm}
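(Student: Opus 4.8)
The plan is to verify the defining minimality condition of Definition~\ref{def:b-orderings} directly for the sequence $a_i = i$, simultaneously for every base $b \ge 2$ (the cases $b=0$ and $b=1$ being trivial, since every sequence of distinct elements is a $0$-ordering and every sequence is a $1$-ordering). Fix $b \ge 2$ and $k \ge 1$. I must show that the choice $a_k = k$ minimizes $\sum_{j=0}^{k-1} \ordr_b(a' - j)$ over all $a' \in \ZZ$; equivalently, that
\begin{equation*}
\sum_{j=0}^{k-1} \ordr_b(k - j) \le \sum_{j=0}^{k-1} \ordr_b(a' - j) \quad \text{for all } a' \in \ZZ.
\end{equation*}
The left-hand side equals $\sum_{m=1}^{k} \ordr_b(m)$, since as $j$ runs over $0,\dots,k-1$ the differences $k-j$ run over $1,\dots,k$.

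The key step is a counting identity for $\ordr_b$. For any integer $m \neq 0$ one has $\ordr_b(m) = \sum_{r \ge 1} [\, b^r \mid m\,]$, so the right-hand side above becomes
\begin{equation*}
\sum_{j=0}^{k-1} \ordr_b(a'-j) = \sum_{r \ge 1} \#\{\, 0 \le j \le k-1 : a' - j \equiv 0 \pmod{b^r}\,\}.
\end{equation*}
For each fixed $r$, the inner count is the number of $j$ in a window of $k$ consecutive integers lying in a fixed residue class mod $b^r$; this count is at least $\lfloor k / b^r \rfloor$ regardless of the window's position, and it is exactly $\lfloor k/b^r\rfloor$ when the window is $\{0,1,\dots,k-1\}$ and the residue class is $0$ (the multiples of $b^r$ among $1,\dots,k$ number $\lfloor k/b^r\rfloor$). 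Summing the lower bound $\lfloor k/b^r \rfloor$ over $r \ge 1$ gives exactly $\sum_{m=1}^k \ordr_b(m)$, the value attained by the natural ordering. This simultaneously establishes the inequality for every $b$ and identifies the minimizing value, so $a_k = k$ is a valid greedy choice for all $b$ at once.

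The main obstacle is the elementary but genuine combinatorial fact that any $k$ consecutive integers contain \emph{at least} $\lfloor k/b^r\rfloor$ members of any fixed residue class modulo $b^r$, with equality for the standard window. This is where the ``window of length $k$'' must be handled carefully: a run of $k$ consecutive integers contains either $\lfloor k/b^r\rfloor$ or $\lceil k/b^r\rceil$ elements of each residue class, so the minimum over classes is $\lfloor k/b^r\rfloor$, and this minimum is realized at the residue class $0$ precisely for the window $\{0,\dots,k-1\}$. Everything else is bookkeeping. I would also remark that this gives a second proof, independent of the congruence-preserving map $\varphi_b$, that the $b$-exponent invariant of $\ZZ$ is $\ivr_k(\ZZ,b) = \sum_{m=1}^{k}\ordr_b(m)$, recovering de Polignac's formula base-by-base and recovering Theorem~\ref{thm:Bhargava-simultaneous-p-ordering} when $b = p$ is prime.
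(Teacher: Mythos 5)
Your proposal is correct and follows essentially the same route as the paper's proof: both expand $\ordr_b(a'-j)$ as $\sum_{r\ge 1}[\,b^r \mid a'-j\,]$, interchange the order of summation, lower-bound the number of $j\in\{0,\dots,k-1\}$ in each residue class modulo $b^r$ by $\lfloor k/b^r\rfloor$, and observe that $a'=k$ attains this bound simultaneously for every $r$ (and every $b$). The only cosmetic difference is that the paper phrases the verification as an induction on $k$ and cites its treatment of $b=0,1$ from the proof of Theorem~\ref{thm:well-definedness}, while you verify the minimality condition at each step directly; the mathematical content is identical.
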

%
%

\begin{proof}
For $b=0$ and $b=1$ the $b$-ordering of $\SSS$ is immediate from the proof of Theorem \ref{thm:well-definedness}.
For $b \ge 2$, we have
\begin{equation}\label{eqn:decompose-order_b}
\ordr_b\left(n \right)=\sum_{\substack{i\ge1\\b^i\mid n}}1.
\end{equation}
The proof for $S= \ZZ$  for each $b \ge 2$ proceeds by induction on $k \ge 0$. The base case $k=0$ vacuously holds. 
If  $0,1,2,\dots,k-1$ is a $b$-ordering for the first $k-1$ steps, then at the $k$th step we need to pick $a'= a_k\in\mathbb{Z}$ to minimize
$$
Q:=\sum_{j=0}^{k-1}\ordr_b\left(a'-j\right).
$$
Then, for fixed $a'$, 
\begin{equation}\label{eqn:simultaneous}
Q =\sum_{j=0}^{k-1} \bigg( \sum_{\substack{i\ge 1\\a'\equiv j\bmod{b^i}}} 1 \bigg)
=\sum_{i=1}^{\infty} \bigg( \sum_{\substack{j=0\\j\equiv a'\bmod{b^i}}}^{k-1}1\bigg)
\ge\sum_{i=1}^{\infty} \left\lfloor\frac{k}{b^i}\right\rfloor.
\end{equation}
Since equality holds  if $a'=k$, we may choose $a_k=k$, completing the induction step. 
\end{proof}

%
%

\subsection{Generalized factorials  and generalized integers for  $(S,\T)=(\mathbb{Z},\NN)$}
\label{subsec:72}

%
%
\begin{thm}\label{thm:factorial-factorization}

Let $b \ge 2$ and $k \ge 0$ be integers. Then
\begin{equation}\label{eqn:gamma_k} 
\ivr_k(\ZZ,b) =\sum_{i=1}^\infty\left\lfloor\frac{k}{b^i}\right\rfloor.
\end{equation}
For $k=0,1,2,\dots$, the generalized factorial of $k$ associated to $S=\mathbb{Z}$ and $\T=\NN$  is
\begin{equation}
k!_{\mathbb{Z}, \NN}
=\prod_{b=2}^k b^{\ivr_k(\ZZ,b)}.
\end{equation}
\end{thm}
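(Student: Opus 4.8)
The plan is to compute the well-defined invariant $\ivr_k(\ZZ,b)$ by evaluating the $b$-exponent sequence along one convenient $b$-ordering, and then to assemble the factorial directly from its definition. The crucial input is Theorem~\ref{thm:simultaneous-b-ordering1}, which guarantees that the natural ordering $a_i=i$ is a $b$-ordering of $\ZZ$ for every $b\ge2$ simultaneously; combined with the well-definedness in Theorem~\ref{thm:well-definedness}, this lets me evaluate $\ivr_k(\ZZ,b)$ along $a_i=i$ without loss of generality.

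For part~(1), fixing $b\ge2$ I would write
$$
\ivr_k(\ZZ,b)=\sum_{j=0}^{k-1}\ordr_b(k-j)=\sum_{m=1}^k\ordr_b(m),
$$
reindexing by $m=k-j$. Then I would substitute the decomposition \eqref{eqn:decompose-order_b}, namely $\ordr_b(m)=\sum_{i\ge1,\,b^i\mid m}1$, and interchange the two summations; the inner count is $\#\{1\le m\le k:\,b^i\mid m\}=\lfloor k/b^i\rfloor$, which yields \eqref{eqn:gamma_k}. This is exactly the computation already carried out in \eqref{eqn:simultaneous} inside the proof of Theorem~\ref{thm:simultaneous-b-ordering1}, specialized to the choice $a'=k$.

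For part~(2), I would unfold the definition $k!_{\ZZ,\NN}=\prod_{b\in\NN}b^{\ivr_k(\ZZ,b)}$ from Definition~\ref{definition:factorials-S-T} and check that only the bases $2\le b\le k$ contribute. The extreme bases are dispatched by the conventions: for $b=0$ one has $\ivr_k(\ZZ,0)=0$, so the factor is $0^0=1$; for $b=1$ one has $\ivr_k(\ZZ,1)=+\infty$ when $k\ge1$, but $1^{+\infty}=1$, so this factor is again $1$. For $b>k$ formula \eqref{eqn:gamma_k} gives $\ivr_k(\ZZ,b)=0$, since $b^i>k$ for every $i\ge1$, whence $b^0=1$. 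Hence the product collapses to $\prod_{b=2}^k b^{\ivr_k(\ZZ,b)}$, as claimed.

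The argument presents no serious obstacle: the summation identity in part~(1) is essentially a restatement of the computation inside Theorem~\ref{thm:simultaneous-b-ordering1}, and part~(2) is pure bookkeeping. The only point demanding care is the correct handling of the extreme bases $b=0,1$ and the truncation at $b=k$, all of which rest on the conventions of Definition~\ref{definition:factorials-S-T} together with the well-definedness already established.
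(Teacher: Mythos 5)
Your proposal is correct and follows essentially the same route as the paper: the paper likewise obtains \eqref{eqn:gamma_k} by combining the well-definedness of $\ivr_k(\ZZ,b)$ with the fact that the natural ordering is a simultaneous $b$-ordering, citing the equality case $a'=k$ of the computation \eqref{eqn:simultaneous}, and then disposes of the bases $b=0$, $b=1$, and $b>k$ exactly as you do. Your write-up merely makes explicit the reindexing $m=k-j$ and the interchange of summations that the paper leaves implicit in its reference to \eqref{eqn:simultaneous}.
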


Theorem \ref{thm:factorial-factorization} parallels Legendre's formula
for the  prime factorization of $k!$, since we have 
$\order_p(k!)=\sum_{i=1}^\infty\left\lfloor\frac{k}{p^i}\right\rfloor= \ivr_k(\ZZ,p)$
for all $p\in\PP$.

%
%
\begin{proof}
The result  directly follows 
   from the proof of Theorem \ref{thm:simultaneous-b-ordering1}, cf. \eqref{eqn:simultaneous}. Note that $\ivr_k(\ZZ, b)=0$ in 
\eqref{eqn:gamma_k}  if $b>k$. For $b=0$ and $b=1$, $\alpha_k(\ZZ, 0) =0$ for all $k \ge 0$, $\alpha_0(\ZZ, 1)=0$
and $\alpha_k(\ZZ,1)= +\infty$ for $k \ge 1$ always contribute values of $1$ to the factorial. 
\end{proof}

We next treat generalized integers.

%
%
\begin{thm}\label{thm:composition}
For $n=1,2,3,\dots$, the $n$th generalized 
integer associated to $S=\mathbb{Z}$ and $\T=\NN$
 is
\begin{equation}
[n]_{\mathbb{Z},\NN}
=\prod_{b=2}^n b^{\ordr_b(n)},
\end{equation}
where $\ordr_b(n)$ is the maximal $\alpha\in\mathbb{N}$ such that $b^\alpha$ divides $n$.
\end{thm}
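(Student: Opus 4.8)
The plan is to start from the product formula for generalized integers already established in Lemma~\ref{lem:61}, specialized to $(S,\sT)=(\ZZ,\NN)$, which gives
\[
[n]_{\ZZ,\NN}=\prod_{b\in\NN}b^{\ivr_n(\ZZ,b)-\ivr_{n-1}(\ZZ,b)},
\]
and then to evaluate each exponent explicitly. First I would dispose of the two extreme bases. For $b=0$ we have $\ivr_k(\ZZ,0)=0$ for every $k$ by the computation in the proof of Theorem~\ref{thm:well-definedness}, so that factor is $0^0=1$ under the stated convention. For $b=1$ we have $\ivr_k(\ZZ,1)=+\infty$ for $k\ge1$, and since $1^{\,\cdot}=1$ (in particular $1^{+\infty}=1$) this factor is also $1$. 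Hence only the bases $b\ge2$ contribute, and it suffices to identify their exponents.

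For $b\ge2$ I would invoke the explicit formula $\ivr_k(\ZZ,b)=\sum_{i=1}^{\infty}\lfloor k/b^i\rfloor$ from Theorem~\ref{thm:factorial-factorization}, so that the relevant exponent is
\[
\ivr_n(\ZZ,b)-\ivr_{n-1}(\ZZ,b)=\sum_{i=1}^{\infty}\left(\left\lfloor\frac{n}{b^i}\right\rfloor-\left\lfloor\frac{n-1}{b^i}\right\rfloor\right).
\]
The key elementary step is the standard floor increment identity: for $n\ge1$ and $m\ge1$ one has $\lfloor n/m\rfloor-\lfloor (n-1)/m\rfloor=1$ if $m\mid n$ and $=0$ otherwise. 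Applying this with $m=b^i$ collapses the sum to a count of those $i\ge1$ with $b^i\mid n$, which by the decomposition \eqref{eqn:decompose-order_b} is exactly $\ordr_b(n)$. Thus $\ivr_n(\ZZ,b)-\ivr_{n-1}(\ZZ,b)=\ordr_b(n)$ for all $b\ge2$.

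Finally I would observe that $\ordr_b(n)=0$ whenever $b>n$, since then $b^i>n$ for all $i\ge1$ and no power of $b$ divides $n$; this truncates the infinite product to $\prod_{b=2}^{n}b^{\ordr_b(n)}$, giving the claim. I do not anticipate a genuine obstacle here: the argument is a direct bookkeeping consequence of the already-proved Lemma~\ref{lem:61} and Theorem~\ref{thm:factorial-factorization}, and the only point requiring minor care is the consistent handling of the $b=0$ and $b=1$ factors under the $0^0=1$ and $1^{+\infty}=1$ conventions.
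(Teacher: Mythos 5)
Your proposal is correct and follows essentially the same route as the paper: specialize the product formula for generalized integers to $(\ZZ,\NN)$, substitute the explicit expression $\ivr_k(\ZZ,b)=\sum_{i\ge1}\lfloor k/b^i\rfloor$ from Theorem~\ref{thm:factorial-factorization}, collapse the telescoped exponent via the floor increment identity, and identify the result with $\ordr_b(n)$ using \eqref{eqn:decompose-order_b}. The only cosmetic difference is that you explicitly dispose of the $b=0,1$ factors, whereas the paper starts the product at $b=2$ outright by citing Theorem~\ref{thm:factorial-factorization}.
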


Theorem \ref{thm:composition} is analogous to the prime factorization of positive integers:
$$
n=\prod_{p\in\PP}p^{\order_p(n)}.
$$

%
%

\begin{proof}
Write $[n]_{\mathbb{Z},\NN}
=\prod_{b=2}^n b^{\delta(n,b)}$. 
From Theorem \ref{thm:factorial-factorization} we have
\[
\delta(n,b) = \ivr_n(\ZZ,b) - \ivr_{n-1}(\ZZ, b)= \sum_{i=1}^{\infty} \left\lfloor\frac{n}{b^i}\right\rfloor - \left\lfloor\frac{n-1}{b^i}\right\rfloor.
\]
The result follows by noting that
$$
\left\lfloor\frac{n}{b^i}\right\rfloor-\left\lfloor\frac{n-1}{b^i}\right\rfloor=
\begin{cases}
1&\text{if }b^i\mid n,\\
0&\text{otherwise.}
\end{cases}
$$
Here \eqref{eqn:decompose-order_b} yields
$$
\ordr_b\left(n\right)=\sum_{i\ge1}\left(\left\lfloor\frac{n}{b^i}\right\rfloor-\left\lfloor\frac{n-1}{b^i}\right\rfloor\right)
$$
giving the result.
\end{proof}

%
%

\subsection{Generalized binomial coefficients for  $(S,\T)=(\mathbb{Z},\NN)$}\label{subsec:73}
We next treat generalized binomial coefficients.
The next result gives a (partial) factorization of the generalized binomial coefficients.

%
%
\begin{thm}\label{thm:binomial-factorization}
Let $k\ge\ell$ be nonnegative integers. Then:
\begin{enumerate}
\item[\emph{(1)}] We have
\begin{equation}
\binom{k}{\ell}_{\mathbb{Z},\NN}
=\prod_{b=2}^k b^{\bt(k,\ell,b)},
\end{equation}
where
\begin{equation}
\bt(k,\ell,b):=\sum_{i=1}^\infty\left(\left\lfloor\frac{k}{b^i}\right\rfloor-\left\lfloor\frac{\ell}{b^i}\right\rfloor-\left\lfloor\frac{k-\ell}{b^i}\right\rfloor\right).
\end{equation}
\item[\emph{(2)}] For $b\ge 2$,
\begin{equation}\label{eqn:beta-digit}
\bt(k,\ell,b)=\frac{1}{b-1}\left(d_b(\ell)+d_b(k-\ell)-d_b(k)\right),
\end{equation}
where $d_b(j)$ is the sum of the base-$b$ digits of $j$.
\end{enumerate}
\end{thm}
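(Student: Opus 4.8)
The plan is to obtain both parts as consequences of the factorization formula for generalized binomial coefficients in Lemma~\ref{lem:62}, specialized to $(\SSS,\sT)=(\ZZ,\NN)$, together with the explicit evaluation of the $b$-exponent sequence of $\ZZ$ furnished by Theorem~\ref{thm:factorial-factorization}.

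For part (1), I would start from
\begin{equation*}
\binom{k}{\ell}_{\ZZ,\NN}=\prod_{b\in\NN} b^{\ivr_k(\ZZ,b)-\ivr_\ell(\ZZ,b)-\ivr_{k-\ell}(\ZZ,b)},
\end{equation*}
which is the content of Lemma~\ref{lem:62}. First I would dispose of the extreme bases: using $\ivr_j(\ZZ,0)=0$ for all $j$ and $\ivr_j(\ZZ,1)\in\{0,+\infty\}$ recorded in the proof of Theorem~\ref{thm:factorial-factorization}, the conventions $0^0=1$ and $1^{+\infty}=1^0=1$ of Definition~\ref{definition:factorials-S-T} show that $b=0$ and $b=1$ each contribute a trivial factor $1$, so they drop out. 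For $b\ge2$, substituting $\ivr_j(\ZZ,b)=\sum_{i\ge1}\lfloor j/b^i\rfloor$ from \eqref{eqn:gamma_k} identifies the exponent of $b$ with $\bt(k,\ell,b)$ termwise. Finally, since $\lfloor j/b^i\rfloor=0$ whenever $b>j$, one has $\bt(k,\ell,b)=0$ for $b>k$, so the product truncates at $b=k$, giving the displayed finite product.

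For part (2), the crux is the base-$b$ analogue of Legendre's formula,
\begin{equation*}
\sum_{i=1}^\infty\left\lfloor\frac{j}{b^i}\right\rfloor=\frac{j-d_b(j)}{b-1}, \qquad j\ge0,\ b\ge2.
\end{equation*}
I would prove this by writing the (finite) base-$b$ expansion $j=\sum_{m\ge0}c_mb^m$, using $\lfloor j/b^i\rfloor=\sum_{m\ge i}c_mb^{m-i}$, interchanging the order of the two finite sums, and evaluating the geometric sum $\sum_{i=1}^m b^{m-i}=(b^m-1)/(b-1)$; collecting terms yields $\frac{1}{b-1}\sum_{m\ge1}c_m(b^m-1)=\frac{j-d_b(j)}{b-1}$. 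Applying this identity to $j=k,\ \ell,\ k-\ell$ (all nonnegative because $k\ge\ell\ge0$) and substituting into $\bt(k,\ell,b)=\ivr_k(\ZZ,b)-\ivr_\ell(\ZZ,b)-\ivr_{k-\ell}(\ZZ,b)$ cancels the linear terms, since $k-\ell-(k-\ell)=0$, and leaves exactly $\frac{1}{b-1}\bigl(d_b(\ell)+d_b(k-\ell)-d_b(k)\bigr)$.

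There is no substantive obstacle here: once Lemma~\ref{lem:62} and Theorem~\ref{thm:factorial-factorization} are in hand, both parts reduce to bookkeeping. The only points demanding care are verifying that the extreme bases $b=0,1$ yield genuine factors of $1$ under the stated conventions rather than indeterminate expressions, and checking that the interchange of summation in the digit-sum identity is legitimate; the latter is immediate because $j$ has finitely many nonzero base-$b$ digits, so every sum in play is finite.
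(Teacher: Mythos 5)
Your proposal is correct and follows essentially the same route as the paper: part (1) is the factorization formula for generalized binomial coefficients (Lemma~\ref{lem:62}, equivalently the definition) combined with the evaluation $\ivr_j(\ZZ,b)=\sum_{i\ge1}\lfloor j/b^i\rfloor$ from Theorem~\ref{thm:factorial-factorization}, and part (2) is exactly the paper's identity $d_b(n)=n-(b-1)\sum_{i\ge1}\lfloor n/b^i\rfloor$ (you state it in rearranged form) substituted term by term so the linear terms cancel. Your explicit treatment of the bases $b=0,1$ and your proof of the digit-sum identity are just fuller bookkeeping of steps the paper leaves implicit.
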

%
%

\begin{proof}
(1) follows from the definition and Theorem \ref{thm:factorial-factorization}.

(2) follows from  the identity
$$
d_b(n)=n-(b-1)\sum_{i=1}^\infty\left\lfloor\frac{n}{b^i}\right\rfloor,
$$
substituted term by term into the formula for $\bt(k,\ell,b)$ in (1).
\end{proof}

The formula \eqref{eqn:beta-digit} shows that for all $b$ the exponent $\bt(k,\ell,b)\ge0$,
which implies these generalized binomial coefficients are integers.
This observation is a special case of Theorem \ref{thm:binomial-integer}.

%
%
\begin{rem}\label{rem:74}
 Knuth and Wilf \cite{KW:89} studied  a notion of generalized binomial coefficients $\binom{k}{\ell}_\mathcal{C}$ 
associated to   a sequence $\mathcal{C}=\left(C_n\right)_{n=1}^\infty$ of positive integers (``generalized integers") via the definition
$$
\binom{k}{\ell}_\mathcal{C} :=\prod_{j=1}^\ell\frac{C_{k-j+1}}{C_j}
$$
for  $0\le\ell\le k$. They called such a sequence  $\mathcal{C}$ {\em regularly divisible} if it satisfies the condition
$$
\gcd\left(C_m,C_n\right)=C_{\gcd(m,n)}
$$
for all positive integers $m$ and $n$. They proved that for all regularly divisible sequences
 all associated generalized binomial coefficients are  integers.

In the case of  generalized integers  $[n]_{\mathbb{Z},\sT}$
 it can be shown for any   $\sT \subseteq \PP$ that the
 sequence of  (Bhargava) generalized-integers $\mathcal{C} :=\left([n]_{\mathbb{Z},\sT}\right)$ is a regularly divisible sequence. 
However for    $\sT =\NN$
the  sequence
 $\mathcal{C}_1=\left([n]_{\mathbb{Z},\NN}
 \right)_{n=1}^\infty$  is not regularly divisible, since
$$
\gcd\left([4]_{\mathbb{Z},\NN}, [6]_{\mathbb{Z},\NN} \right)=\gcd(16,36) =4 
\quad\text{but}
\quad[\gcd(4,6)]_{\mathbb{Z},\NN}=[2]_{\mathbb{Z},\NN}=2.
$$
Nevertheless  Theorem \ref{thm:binomial-factorization} shows for $\sT=\NN$  that all 
 $\binom{k}{\ell}_{\mathcal{C}_1} :=\binom{k}{\ell}_{\mathbb{Z},\NN}$
are integers.
\end{rem}

%
%
%
%

\subsection{Generalized products of binomial coefficients for  $(S,\T)=(\mathbb{Z},\NN)$}
\label{subsec:74}

The product of binomial coefficients in the $n$-th row of 
Pascal's triangle itself was previously studied in \cite{LagM:2016} and  \cite{DL:22}.
The integer sequence
\begin{equation}\label{eqn:binom-prod}
\Gb_n= \prod_{\ell=0}^n {{n}\choose{\ell}}
\end{equation}
given by the product of the binomial coefficients given in
the $n$-th row of Pascal's triangle arises as the
reciprocal of the product of all nonzero unreduced Farey fractions $\frac{k}{\ell}$ having
$1 \le k \le \ell \le n$.
The paper \cite[Theorem 5.1]{LagM:2016} computed its prime factorization to be
\begin{equation} \label{eqn:binomial-prod-fact}
\Gb_n= \prod_ {p \le n} p^{\nub(n,p)},
\end{equation}
in which
$$
\nub(n, p) = \frac{2}{p-1}S_p(n) - \frac{n-1}{p-1} d_p(n),
$$
where $d_p(n)$ is the sum of the base $p$ digits of $n$,
and $S_p(n) = \sum_{k=0}^{n-1} d_p(n)$ denotes  their cumulative sum.

We define an analogue, the product of the generalized binomial coefficients
for $(S,\T)=(\mathbb{Z},\NN)$.  This analogue was introduced  in \cite{DLY:22+},  defined 
by the right side of equation \eqref{eqn:gen-binom-prod} below.

%
%
\begin{thm}\label{cor:binomial-product-formula}
Let $\Gbb_n$ denote the product of the generalized binomial coefficients associated to $S=\mathbb{Z}$ 
and $\T=\NN$
in the $n$th row of the generalized Pascal's triangle:
\begin{equation}
\Gbb_n:=\prod_{k=0}^n\binom{n}{k}_{\mathbb{Z},\NN}.
\end{equation}
Then for $n=1,2,3,\dots$,
\begin{equation}\label{eqn:gen-binom-prod}
\Gbb_n=\prod_{b=2}^n b^{\nub(n,b)},
\end{equation}
where
\begin{equation}\label{eqn:floor-formula}
\nub(n,b):=\frac{2}{b-1}S_b(n)-\frac{n-1}{b-1}d_b(n),
\end{equation}
where $d_b(j)$ is the sum of the base $b$ digits of $j$,
and $S_b(n):=\sum_{j=1}^{n-1}d_b(j)$.
\end{thm}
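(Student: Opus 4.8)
The plan is to reduce everything to the digit-sum factorization already established in Theorem \ref{thm:binomial-factorization}. First I would interchange the two (finite) products: since $\binom{n}{k}_{\ZZ,\NN} = \prod_{b=2}^n b^{\bt(n,k,b)}$ by Theorem \ref{thm:binomial-factorization}(1), collecting exponents base by base gives $\Gbb_n = \prod_{b=2}^n b^{\nub(n,b)}$ with
$$
\nub(n,b) = \sum_{k=0}^n \bt(n,k,b).
$$
Thus the whole problem collapses to evaluating this single sum for each fixed $b \ge 2$, and the product form \eqref{eqn:gen-binom-prod} is immediate.

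Next I would substitute the closed form $\bt(n,k,b) = \frac{1}{b-1}\left(d_b(k) + d_b(n-k) - d_b(n)\right)$ from Theorem \ref{thm:binomial-factorization}(2) and split the sum into three pieces. The reindexing $k \mapsto n-k$ shows $\sum_{k=0}^n d_b(n-k) = \sum_{k=0}^n d_b(k)$, so the first two pieces coincide, while the third contributes $(n+1)\,d_b(n)$ since $d_b(n)$ is constant in $k$. This yields
$$
\nub(n,b) = \frac{1}{b-1}\left(2 \sum_{k=0}^n d_b(k) - (n+1)\,d_b(n)\right).
$$

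Finally I would reconcile the summation range with the paper's convention $S_b(n) = \sum_{j=1}^{n-1} d_b(j)$. Because $d_b(0) = 0$, the full sum is $\sum_{k=0}^n d_b(k) = S_b(n) + d_b(n)$, and substituting this gives
$$
\nub(n,b) = \frac{1}{b-1}\left(2 S_b(n) + 2\,d_b(n) - (n+1)\,d_b(n)\right) = \frac{2}{b-1} S_b(n) - \frac{n-1}{b-1}\,d_b(n),
$$
which is exactly the claimed formula \eqref{eqn:floor-formula}.

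I do not expect a genuine obstacle here: the statement is a direct corollary of Theorem \ref{thm:binomial-factorization} once the products are interchanged, and it mirrors the prime-factorization computation of $\Gb_n$ in \cite{LagM:2016} with the prime $p$ replaced by an arbitrary base $b$. The only point requiring a moment of care is the boundary bookkeeping in the last step, where the terms $k=0$ and $k=n$ are precisely what distinguish $\sum_{k=0}^n d_b(k)$ from $S_b(n)$; this is routine, and the identity $d_b(0)=0$ handles it cleanly.
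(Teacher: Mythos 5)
Your proposal is correct and follows essentially the same route as the paper: interchange the two finite products to get $\nub(n,b)=\sum_{k=0}^n\bt(n,k,b)$, then evaluate this sum via the digit-sum formula $\bt(n,k,b)=\frac{1}{b-1}\bigl(d_b(k)+d_b(n-k)-d_b(n)\bigr)$ from Theorem \ref{thm:binomial-factorization}(2). The only cosmetic difference is bookkeeping: the paper drops the vanishing terms $k=0$ and $k=n$ at the outset and sums over $1\le k\le n-1$, while you keep the full range and reconcile with $S_b(n)$ at the end using $d_b(0)=0$; both are correct.
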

%
%

\begin{proof}
By Theorem \ref{thm:binomial-factorization} we  have
\[
\Gbb_n =\prod_{k=0}^n\binom{n}{k}_{\mathbb{Z},\NN} 
=\prod_{k=0}^n\prod_{b=2}^nb^{\bt(n,k,b)}
=\prod_{b=2}^nb^{\sum_{k=0}^n\bt(n,k,b)}.
\] 
From \eqref{eqn:beta-digit} in Theorem \ref{thm:binomial-factorization},
\[
\sum_{k=0}^n\bt(n,k,b)=\sum_{k=1}^{n-1}\frac{1}{b-1}\left(d_b(k)+d_b(n-k)-d_b(n)\right)
=\nub(n,b),
\] 
as asserted.
\end{proof}

%
%
\section{The case  $\SSS= \PP$}\label{sec:7a} 

We treat the special case $\SSS= \PP$, the primes in $\ZZ$,
and determine the $b$-exponent sequences and generalized factorials, for 
all bases $b \ge 2$. Note that $\varphi(b^n) = b^{n-1} \varphi(b)$ for all $n \ge 1$.


\begin{thm}\label{thm:B-prime}
Let $b\ge2$ and  $k\ge0$ be integers. Then
\begin{equation}\label{eqn:P-k-factorized-def}
\ddnu_k(\mathbb{P},b)=\max\left\{0,\left\lfloor\frac{k-\omega(b)}{\varphi(b)}\right\rfloor
+\left\lfloor\frac{k-\omega(b)}{b\varphi(b)}\right\rfloor+\left\lfloor\frac{k-\omega(b)}{b^2\varphi(b)}\right\rfloor+\cdots\right\},
\end{equation}
where $\varphi(b)$ is Euler's totient function of $b$ and $\omega(b)$ denotes the number of distinct prime divisors
 of $b$. In particular, $\ddnu_k(\mathbb{P},b)=0$ if $\varphi(b)+\omega(b)>k$.
\end{thm}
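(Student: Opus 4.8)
The plan is to compute the $b$-exponent sequence $\ivr_k(\PP, b)$ directly by constructing an explicit $b$-ordering of $\PP$ and evaluating the resulting sum, in close analogy with the proof of Theorem \ref{thm:simultaneous-b-ordering1} for $\SSS = \ZZ$. The key structural fact is that a prime $q$ contributes to $\ordr_b(q - q')$ precisely when $q \equiv q' \pmod{b^i}$, so by \eqref{eqn:decompose-order_b} the minimization at each step is governed by how the primes distribute among the residue classes modulo $b^i$. Writing $b = \prod_{j} p_j^{e_j}$ with $\omega(b)$ distinct prime factors, the first observation I would establish is that the primes dividing $b$ behave differently from the rest: each $p_j \mid b$ occupies its own residue class $\bmod\, b$ that no other prime reaches, which accounts for the shift by $\omega(b)$ in the numerators. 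All primes not dividing $b$ are units modulo $b^i$ and hence lie in the $\varphi(b^i) = b^{i-1}\varphi(b)$ reduced residue classes, distributed (by Dirichlet's theorem on primes in arithmetic progressions) into each such class infinitely often and evenly in the limit.

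First I would set up the counting. For a candidate next prime $a' = q$ at step $k$, I would write, paralleling \eqref{eqn:simultaneous},
\begin{equation*}
Q = \sum_{j=0}^{k-1} \ordr_b(q - a_j) = \sum_{i=1}^{\infty} \#\{\, 0 \le j \le k-1 : a_j \equiv q \!\!\pmod{b^i}\,\},
\end{equation*}
and argue that the greedy choice always selects a prime lying in a residue class modulo $b^i$ that is as lightly populated as possible among the previously chosen $a_0, \dots, a_{k-1}$. Since the reduced residue classes modulo $b^i$ number $\varphi(b^i) = b^{i-1}\varphi(b)$, the most balanced way to place $k - \omega(b)$ primes coprime to $b$ (the remaining $\omega(b)$ slots being consumed by the prime divisors of $b$, which incur no further $b^i$-agreement for $i \ge 1$ beyond one initial placement) into these classes yields a minimum contribution at level $i$ of $\lfloor (k - \omega(b))/\varphi(b^i)\rfloor = \lfloor (k-\omega(b))/(b^{i-1}\varphi(b))\rfloor$. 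Summing over $i \ge 1$ gives exactly the floor sum in \eqref{eqn:P-k-factorized-def}, with the $\max\{0, \cdot\}$ and the threshold $\varphi(b)+\omega(b) > k$ accounting for the regime where there are too few primes to force any coincidence.

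The two steps I expect to be the main obstacles are: (i) verifying that a genuine $b$-ordering of $\PP$ actually realizes this balanced distribution simultaneously at every level $i$, rather than just at a single $i$; and (ii) justifying the role of $\omega(b)$ rigorously. For (i), I would exhibit a concrete ordering — for instance, ordering the primes so as to fill reduced residue classes modulo successively higher powers of $b$ in a round-robin fashion — and invoke the existence of infinitely many primes in every reduced class modulo $b^i$ (Dirichlet) to guarantee such a choice is always available in $\PP$; the simultaneity across all $i$ is the delicate point, mirroring the simultaneity in Theorem \ref{thm:Bhargava-simultaneous-p-ordering}. For (ii), I would carefully track the prime divisors of $b$: a prime $p_j \mid b$ satisfies $\ordr_b(p_j - q') = 0$ for any prime $q' \ne p_j$ coprime to $p_j$, so each such $p_j$ can be inserted at no cost, effectively reducing the count of "expensive" primes from $k$ to $k - \omega(b)$. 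Once the correct $b$-ordering is in hand and its $b$-exponent sequence is shown to match \eqref{eqn:P-k-factorized-def} term by term, well-definedness from Theorem \ref{thm:well-definedness} (equivalently Theorem \ref{thm:main-propertyCZ}) guarantees the value is independent of the chosen ordering, completing the proof.
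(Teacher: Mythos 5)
Your construction coincides with the paper's: the $\omega(b)$ prime divisors of $b$ first (mutually at cost zero, since $0<|p-p'|<b$), followed by primes, supplied by Dirichlet's theorem, occupying the reduced residue classes modulo $b^e$ in increasing order of representative --- which is exactly the round-robin-at-every-level-$\ell\le e$ ordering you describe --- and your concluding appeal to Theorem \ref{thm:well-definedness} is also how the paper finishes. The genuine gap is the step you flag as obstacle (i) and then defer: a proof that this sequence actually \emph{is} a $b$-ordering of $\PP$, i.e., that at each step $k$ no candidate prime $q$ achieves a smaller value of $\sum_{j=0}^{k-1}\ordr_b(q-a_j)$ than the constructed $a_k$. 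Your justification (``the greedy choice always selects a prime lying in a residue class modulo $b^i$ that is as lightly populated as possible,'' plus the balanced-placement principle) does not prove this: the balanced-placement principle (the paper's Lemma \ref{lem:well-distributed}) controls the minimum of the \emph{total pairwise} coincidence count over all ways of distributing $k-\omega(b)$ primes among classes, whereas the greedy property requires a lower bound on the \emph{single-step} sum for \emph{every} candidate $q$, given the specific previously chosen set, simultaneously over all levels $i$. Nothing in the proposal supplies that bound.

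There are two ways to close the gap. The paper's way is indirect: Proposition \ref{prop:84} (proved from Lemma \ref{lem:well-distributed} together with the counting inequality $k+1\le\tilde{k}+\omega(b)$, which is where $\omega(b)$ rigorously enters) lower-bounds the \emph{cumulative} sums $\sum_{j\le k}\ddnu_j(\PP,b,\mathbf{a}')$ for arbitrary $\PP$-test sequences $\mathbf{a}'$, and then an induction on $k$ cancels the already-determined terms $j\le k-1$ to convert this cumulative bound into the stepwise greedy bound; this cumulative-to-stepwise conversion device is precisely what your sketch lacks. Alternatively --- and this would be genuinely simpler than the paper's route --- your round-robin construction admits a direct verification: because the classes are filled in increasing order of representative, \emph{every} reduced residue class modulo $b^\ell$, for every $\ell\le e$, contains at least $\left\lfloor (k-\omega(b))/\varphi(b^\ell)\right\rfloor$ of the chosen primes $a_{\omega(b)},\dots,a_{k-1}$; hence any candidate $q$ coprime to $b$ pays at least $\sum_{\ell=1}^{e}\left\lfloor (k-\omega(b))/\varphi(b^\ell)\right\rfloor$, while a candidate $q\mid b$ (for $k\ge\omega(b)$) repeats an element of the sequence and pays $+\infty$; since the constructed $a_k$ pays exactly this floor sum, the greedy property follows and Theorem \ref{thm:well-definedness} completes the proof. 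Either argument must actually be written out; as the proposal stands, the heart of the theorem is asserted rather than proved.
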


Theorem \ref{thm:B-prime}  yields a  formula for generalized factorials associated to $S=\PP$: For any set $\sT   \subseteq \NN \backslash \{0\}$, 
\begin{equation}\label{eqn:P-k-factorized}
k!_{\PP, \sT} = \prod_{\substack{b \in \sT\\\varphi(b)+\omega(b)\le k}} 
b^{\left\lfloor \frac{k-\omega(b)}{\varphi(b)}\right\rfloor +\left\lfloor \frac{k-\omega(b)}{b \varphi(b)}\right\rfloor + \left\lfloor \frac{k-\omega(b)}{b^2 \varphi(b)}\right\rfloor+\cdots}.
\end{equation}
For the choice $\sT= \PP$, the result specializes to Bhargava's formula in
Theorem \ref{thm:Bhargava-prime}.

To prove Theorem \ref{thm:B-prime} we analytically derive a lower bound for all $\PP$-test sequences,
 and show this bound  is attained, by construction.
 
%
%
\subsection{A combinatorial lemma}\label{subsec:81}

The following combinatorial inequality will play  a useful role in proving Proposition \ref{prop:84}.


\begin{lem}\label{lem:well-distributed}
Let $k\ge0$ and $m\ge1$ be integers. Then:

\emph{(1)} For any integer partition of $k$, given by 
 $n_1,\dots,n_m\ge 0$ such that $\sum_{i=1}^mn_i=k$, there holds
\begin{equation}\label{eqn:key-ineq}
\sum_{i=1}^m\binom{n_i}{2} \ge \sum_{i=0}^{k-1}\left\lfloor\frac{i}{m}\right\rfloor.
\end{equation}

\emph{(2)} Equality holds in {\em (1)} exactly when $n_i= \left\lfloor \frac{k}{m}\right\rfloor+1$ for $k - m \left\lfloor \frac{k}{m} \right\rfloor$
values of $i$, and $n_i= \left\lfloor \frac{k}{m}\right\rfloor$ for $m -k +m\left\lfloor \frac{k}{m} \right\rfloor$ values of $i$.
\end{lem}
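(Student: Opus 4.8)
The plan is to read the inequality as a convexity (majorization) statement for the strictly convex function $\binom{x}{2}=\tfrac12 x(x-1)$, with the right-hand side of \eqref{eqn:key-ineq} being exactly the value of the left-hand side at the \emph{balanced} partition. Write $k=qm+r$ with $0\le r<m$, so $q=\left\lfloor\frac{k}{m}\right\rfloor$ and $r=k-m\left\lfloor\frac{k}{m}\right\rfloor$. First I would put the right-hand side in closed form: grouping $i\in\{0,\dots,k-1\}$ by the constant value of $\left\lfloor\frac{i}{m}\right\rfloor$ produces $q$ full blocks of length $m$ (for $\left\lfloor\frac{i}{m}\right\rfloor=0,1,\dots,q-1$) and a final block of length $r$ (for $\left\lfloor\frac{i}{m}\right\rfloor=q$), whence
\[
\sum_{i=0}^{k-1}\left\lfloor\frac{i}{m}\right\rfloor = m\binom{q}{2}+rq.
\]
Using the first-difference identity $\binom{q+1}{2}-\binom{q}{2}=q$, this equals $r\binom{q+1}{2}+(m-r)\binom{q}{2}$, i.e. the value of $\sum_i\binom{n_i}{2}$ at the balanced partition with $r$ parts equal to $q+1$ and $m-r$ parts equal to $q$. (Combinatorially, inserting $0,1,\dots,k-1$ one at a time into $m$ boxes in round-robin order, item $i$ meets $\left\lfloor\frac{i}{m}\right\rfloor$ predecessors in its box, so the right-hand side counts same-box pairs in the balanced distribution.) Thus both parts reduce to showing that the balanced partition is the unique minimizer of $\sum_{i=1}^m\binom{n_i}{2}$ over all partitions of $k$ into $m$ nonnegative parts.

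The minimality I would establish by a one-step exchange (smoothing) argument built on $\binom{n}{2}-\binom{n-1}{2}=n-1$. Transferring a single unit from a part $n_a$ to a part $n_b$ changes the sum by
\[
\left[\binom{n_a-1}{2}-\binom{n_a}{2}\right]+\left[\binom{n_b+1}{2}-\binom{n_b}{2}\right]=n_b-n_a+1 .
\]
Hence, whenever $n_a\ge n_b+2$, this transfer strictly decreases $\sum_i\binom{n_i}{2}$ (by at least $1$). Since the set of partitions of $k$ into $m$ nonnegative parts is finite, a minimizer exists; by the exchange step every minimizer has all parts differing by at most $1$, so its parts lie in $\{q,q+1\}$, and the constraint $mq+(\#\{i:n_i=q+1\})=k$ forces exactly $r$ parts equal to $q+1$. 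This proves inequality \emph{(1)}, because every partition dominates the (unique) balanced one, and it proves the equality characterization \emph{(2)}, because the strictness of the exchange step shows that any non-balanced partition gives a strictly larger sum.

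I do not anticipate a serious obstacle: the whole content is the convexity of $\binom{\cdot}{2}$. The points needing care are purely bookkeeping — confirming the closed form $m\binom{q}{2}+rq$ matches the balanced value $r\binom{q+1}{2}+(m-r)\binom{q}{2}$, and checking that "all parts within $1$" pins down the multiset uniquely so that the statement of \emph{(2)} is exactly right, including the degenerate case $r=0$ (all parts equal to $q$, with zero parts equal to $q+1$). The edge cases $k=0$ and $m=1$ are immediate, both sides equaling $0$ and $\binom{k}{2}$ respectively, so I would dispose of them in a line before running the exchange argument.
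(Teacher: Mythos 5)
Your proposal is correct and follows essentially the same route as the paper: an exchange (smoothing) argument exploiting the convexity of $\binom{\cdot}{2}$ to show the balanced partition is the unique minimizer, together with the closed-form evaluation $\sum_{i=0}^{k-1}\left\lfloor\frac{i}{m}\right\rfloor=m\binom{q}{2}+rq=r\binom{q+1}{2}+(m-r)\binom{q}{2}$ matching the balanced value. If anything, your write-up of the one-step transfer and of the uniqueness bookkeeping (including the $r=0$ case) is cleaner than the paper's, which contains minor sign/index slips in the corresponding computation.
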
 

\begin{proof}
(1) We claim that  for  nonnegative integer $n_i$  with $\sum_{i=1}^m n_i =k$, the   sum 
$ S= \sum_{i=1}^m\binom{n_i}{2} $
is (uniquely) minimized when all $n_i$ take values either $\left\lfloor \frac{k}{m} \right\rfloor$ or  $\left\lfloor \frac{k}{m} \right\rfloor+1$. 
 Up to order, there must then be $B= k-m\left\lfloor \frac{k}{m} \right\rfloor$ that take the value $\left\lfloor \frac{k}{m} \right\rfloor+1$ 
and $A= m - B$ that take value $\left\lfloor \frac{k}{m} \right\rfloor +1$, since this $(A, B)$ is the only nonnegative integer solution to  
$k =  A \left\lfloor \frac{k}{m} \right\rfloor + B\left(  \left\lfloor \frac{k}{m} \right\rfloor +1\right)$, with $A>0$.
To show the claim, take  any $S$, look at the largest  and smallest $n_i$, call them $n_1$ and $n_m$ for convenience.
 (The value $n_m=0$ is allowed.)
If they differ by at least $2$, then replacing $(n_1, n_m)$  with $(n_1-1, n_m+1)$ decreases the sum $S$, because
$$
\left(\binom{n_m}{2}+\binom{n_1}{2}\right)- \left(\binom{n_m-1}{2}+\binom{n_1+1}{2}\right)=(n_m-n_1)-1>0.
$$
Repeating this process will eventually results in a configuration of $m$ values all differing by at most $1$, 
and the process stops, which must be the configuration above.  

 Next we claim 
\begin{equation}\label{eqn:floor-sum} 
\sum_{i=0}^{k-1}\left\lfloor\frac{i}{m}\right\rfloor= \left(k- m \left\lfloor \frac{k}{m} \right\rfloor\right) {{\left\lfloor \frac{k}{m}\right\rfloor+1}\choose{2}}
+\left(m -k+  m \left\lfloor \frac{k}{m} \right\rfloor\right){{\left\lfloor \frac{k}{m}\right\rfloor}\choose{2}}.
\end{equation}
To establish this equality, write  $i=mj+\ell$
with $0 \le \ell < m$ and sum over $j$ for fixed $\ell$. 
(For $j = \left\lfloor \frac{k}{m} \right\rfloor$
the  largest allowed value of $\ell$ is $k-1- m \left\lfloor \frac{k}{m} \right\rfloor$.)
Then the sum
  $S=\sum_{i=0}^{k}\left\lfloor\frac{i}{m}\right\rfloor $ partitions into  $k-m\left\lfloor \frac{k}{m}\right\rfloor+1$ subsums 
 $\sum_{j=0}^{\left\lfloor \frac{k}{m}\right\rfloor} j = {{\left\lfloor \frac{k+1}{m}\right\rfloor}\choose{2}},$
and $m-k+m\left\lfloor \frac{k}{m}\right\rfloor-1$ subsums 
$\sum_{j=0}^{\left\lfloor \frac{k}{m}\right\rfloor-1} j=  {{\left\lfloor \frac{k}{m}\right\rfloor}\choose{2}}.$ So
$$
\sum_{i=0}^k\left\lfloor\frac{i}{m}\right\rfloor=\left(k-m\left\lfloor\frac{k}{m}\right\rfloor+1\right)\binom{\left\lfloor\frac{k}{m}\right\rfloor+1}{2}+\left(m-k+m\left\lfloor\frac{k}{m}\right\rfloor-1\right)\binom{\left\lfloor\frac{k}{m}\right\rfloor}{2}.
$$
Subtracting the term $\left\lfloor\frac{k}{m}\right\rfloor=\binom{\left\lfloor\frac{k}{m}\right\rfloor+1}{2}-\binom{\left\lfloor\frac{k}{m}\right\rfloor}{2}$ from both sides proves \eqref{eqn:floor-sum},
giving the claim.
This proves  (1). 

(2) The attainment of equality follows from  \eqref{eqn:floor-sum}. The uniqueness follows from the proof of (1). 
\end{proof} 

One can rewrite \eqref{eqn:floor-sum} more elegantly as  the floor function identity
\begin{equation}\label{eqn:floor-identity}
\sum_{i=0}^{k-1}\left\lfloor\frac{i}{m}\right\rfloor=k\left\lfloor\frac{k}{m}\right\rfloor-m {{\left\lfloor\frac{k}{m}\right\rfloor}\choose{2}}.
\end{equation}

%
%
\subsection{Lower bound for $\PP$-test sequences}\label{subsec:82}

We prove a lower bound on $\ddnu_k(\mathbb{P},b, \mathbf{a})$ 
for initial parts of a $\PP$-test sequence $\mathbf{a}=(a_i)_{i=0}^k$  given by the right side of \eqref{eqn:P-k-factorized-def}
in Proposition \ref{prop:84} below.


\begin{prop}\label{prop:84} 
Let  $b\ge2$.
Then for each integer $k \ge \omega(b)$ 
and all finite $\PP$-test sequences $\mathbf{a}=(a_j)_{j=0}^k$
 the inequality
\begin{equation}\label{eqn:ineq-sym-key}
\sum_{j=1}^{k}\sum_{i=0}^{j-1}\ordr_b(a_j- a_i)
\ge\sum_{j=\omega(b)}^k\bigg(\sum_{\ell\ge1}\left\lfloor\frac{j-\omega(b)}{b^{\ell-1}\varphi(b)}\right\rfloor\bigg)
\end{equation}
holds.
\end{prop}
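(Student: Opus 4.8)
The plan is to reduce the symmetric sum on the left to a sum of ``collision counts'' over residue classes, one for each power $b^\ell$, and then to minimize each collision count using the combinatorial Lemma~\ref{lem:well-distributed}. First I would dispose of the degenerate case: if the $\PP$-test sequence $\mathbf{a}$ repeats a prime, then $a_j - a_i = 0$ for some $i<j$, so $\ordr_b(a_j - a_i) = +\infty$ and the asserted inequality holds trivially. Hence I may assume that $a_0, \dots, a_k$ are $k+1$ \emph{distinct} primes. Rewriting the left side as $\sum_{0 \le i < j \le k} \ordr_b(a_j - a_i)$ and expanding each term by the identity \eqref{eqn:decompose-order_b}, namely $\ordr_b(n) = \sum_{\ell \ge 1} [\,b^\ell \mid n\,]$, I would interchange the order of the (nonnegative, hence unconditionally summable) sums to obtain
\[
\sum_{0 \le i < j \le k} \ordr_b(a_j - a_i) = \sum_{\ell \ge 1} P_\ell, \qquad P_\ell := \#\bigl\{(i,j) : i < j, \ a_i \equiv a_j \!\!\pmod{b^\ell} \bigr\}.
\]

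For a fixed $\ell \ge 1$ I would count $P_\ell$ by grouping the $k+1$ primes according to their residue class modulo $b^\ell$; if the classes that are hit contain $n_1, n_2, \dots$ primes respectively, then $P_\ell = \sum_r \binom{n_r}{2}$. The key structural observation is that a prime $q$ coprime to $b$ lies in one of the $\varphi(b^\ell) = b^{\ell-1}\varphi(b)$ invertible residue classes modulo $b^\ell$, whereas a prime dividing $b$ lies in a non-invertible class. Since the $a_i$ are distinct, at most $\omega(b)$ of them can divide $b$; writing $w \le \omega(b)$ for the exact number that do, the remaining $k+1-w$ primes are coprime to $b$ and are distributed among the $\varphi(b^\ell)$ invertible classes. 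I would then check that the $w$ prime divisors of $b$ contribute nothing to $P_\ell$: two distinct prime divisors $p_1 \ne p_2$ of $b$ satisfy $0 < |p_1 - p_2| < b \le b^\ell$, so they are incongruent modulo $b^\ell$, and a prime divisor is incongruent modulo $b^\ell$ (indeed already modulo $b$) to any prime coprime to $b$. Hence $P_\ell = \sum_r \binom{m_r}{2}$, where the $m_r$ are the sizes of the invertible classes, with $\sum_r m_r = k+1-w$ over $\varphi(b^\ell)$ classes.

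Now applying Lemma~\ref{lem:well-distributed}(1) with $m = b^{\ell-1}\varphi(b)$ bins and total $k+1-w$ gives $P_\ell \ge \sum_{i=0}^{k-w} \bigl\lfloor i / (b^{\ell-1}\varphi(b)) \bigr\rfloor$. Since $w \le \omega(b)$ implies $k - w \ge k - \omega(b)$ and every summand is nonnegative, I would drop the extra terms to obtain $P_\ell \ge \sum_{i=0}^{k-\omega(b)} \lfloor i/(b^{\ell-1}\varphi(b)) \rfloor$. Summing over $\ell \ge 1$ and then reindexing the right side of \eqref{eqn:ineq-sym-key} by $i = j - \omega(b)$, which turns its double sum into $\sum_{\ell \ge 1} \sum_{i=0}^{k-\omega(b)} \lfloor i/(b^{\ell-1}\varphi(b))\rfloor$, matches the two expressions and completes the argument. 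I expect the main obstacle to be the bookkeeping in the middle step: verifying cleanly that the primes dividing $b$ occupy pairwise-distinct, non-invertible classes disjoint from those of the coprime primes, so that they contribute no collisions and exactly the shift by $\omega(b)$ (rather than some larger or smaller amount) emerges; everything else is a direct application of the already-established Lemma~\ref{lem:well-distributed} together with the valuation identity \eqref{eqn:decompose-order_b}.
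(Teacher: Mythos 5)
Your proposal is correct and follows essentially the same route as the paper: reduce to distinct primes, decompose the left side via $\ordr_b(n)=\sum_{\ell\ge1}[\,b^\ell\mid n\,]$ into per-modulus collision counts, observe that collisions occur only among the $a_j$ coprime to $b$ (of which there are at least $k+1-\omega(b)$), apply Lemma \ref{lem:well-distributed}(1) with $m=\varphi(b^\ell)=b^{\ell-1}\varphi(b)$, and resum. The only cosmetic difference is your verification that prime divisors of $b$ create no collisions (via the size bound $|p_1-p_2|<b\le b^\ell$ and invertibility of classes), where the paper argues directly that $a_i\equiv a_j\pmod{b^\ell}$ with $a_j\mid b$ forces $a_j\mid a_i$ and hence $a_i=a_j$; both are valid.
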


\begin{proof}
If $a_i=a_j$ for some $0\le i<j\le k$, then $\ordr_b(a_i-a_j)=+\infty$ and \eqref{eqn:ineq-sym-key} holds.
 It remains to treat the case that  $a_0,\dots,a_k$ are all distinct. 
 Since each $a_j$ is prime, it follows that either $\gcd(a_j,b)=1$ or $a_j$ is a prime divisor of $b$. 
 Denote by $\tilde{k}$ the number of $j\in\{0,\dots,k\}$ such that $\gcd(a_j,b)=1$. Since $a_0,\dots,a_k$ are all distinct, and at most $\omega(b)$ prime divisors of $b$
are in the list, one has 
\begin{equation}\label{eqn:counting-indices}
k+1\le\tilde{k}+\omega(b).
\end{equation}
Fix an integer $\ell\ge1$, and let $0< c_1<\dots<c_{\varphi(b^\ell)}< b^{\ell}$ denote 
the  integers relatively prime to $b^\ell$. Denote by $n_h$ the number of $j\in\{0,\dots,k\}$ such that $a_j\equiv c_h\bmod{b^\ell}$. Then
$$
\tilde{k} =\sum_{h=1}^{\varphi(b^\ell)}n_h.
$$
Note that if $a_j$ is a prime divisor of $b$, then the only $i\in\{0,\dots,k\}$ such that $a_i\equiv a_j\bmod{b^\ell}$ is $i=j$. We
lower bound the number of pairs $(i,j)$ such that $0\le i<j\le k$ and $a_i\equiv a_j\bmod{b^\ell}$ by:
$$
\mathop{\sum\sum}_{\substack{0\le i<j\le k\\a_i\equiv a_j\bmod{b^\ell}}}1=\sum_{h=1}^{\varphi(b^\ell)}\bigg(\mathop{\sum\sum}_{\substack{0\le i<j\le k\\a_i\equiv c_h\bmod{b^\ell}\\a_j\equiv c_h\bmod{b^\ell}}}1\bigg)=\sum_{h=1}^{\varphi(b^\ell)}\binom{n_h}{2}.
$$
Applying  Lemma \ref{lem:well-distributed} (1) with $m=\varphi(b^\ell)=b^{\ell-1}\varphi(b)$ 
 to the sum on the right side, we obtain
\begin{equation}\label{eqn:lemma-bound}
\mathop{\sum\sum}_{\substack{0\le i<j\le k\\a_i\equiv a_j\bmod{b^\ell}}}1 =\sum_{h=1}^{\varphi(b^\ell)}\binom{n_h}{2}
\ge \sum_{i=0}^{\tilde{k}-1}\left\lfloor\frac{i}{m}\right\rfloor.
\end{equation}
From \eqref{eqn:counting-indices} it follows that 
\begin{equation}\label{eqn:counting-bound}
\sum_{i=0}^{\tilde{k}-1}\left\lfloor\frac{i}{m}\right\rfloor\ge\sum_{i=0}^{k-\omega(b)}\left\lfloor\frac{i}{m}\right\rfloor=\sum_{i=\omega(b)}^{k}\left\lfloor\frac{i-\omega(b)}{b^{\ell-1}\varphi(b)}\right\rfloor.
\end{equation}
Combining \eqref{eqn:lemma-bound} and \eqref{eqn:counting-bound}, we obtain
\begin{equation}\label{eqn:key-bound}
\mathop{\sum\sum}_{\substack{0\le i<j\le k\\a_i\equiv a_j\bmod{b^\ell}}}1\ge\sum_{i=\omega(b)}^{k}\left\lfloor\frac{i-\omega(b)}{b^{\ell-1}\varphi(b)}\right\rfloor.
\end{equation}

Now, we treat the left side of \eqref{eqn:ineq-sym-key}. We write $\ordr_b(a_i-a_j)=\sum_{\ell\ge1:b^\ell\mid a_i-a_j}1$ 
and interchange the order of summation, obtaining
$$
\sum_{j=1}^k \sum_{i=0}^{k-1} \ordr_b(a_j-a_i)
=\sum_{j=1}^{k}\sum_{i=0}^{k-1}\bigg(\sum_{\substack{\ell\ge1\\b^\ell\mid a_j-a_i}}1\bigg)
=\sum_{\ell\ge1}\bigg(\mathop{\sum\sum}_{\substack{0\le i<j\le k\\a_i\equiv a_j\bmod{b^\ell}}}1\bigg).
$$
Applying \eqref{eqn:key-bound} to the inner sum on the right side, we obtain
$$
\sum_{j=1}^{k}\sum_{i=0}^{j-1} \ordr_b(a_j-a_i)\ge\sum_{\ell\ge1}\bigg(\sum_{i=\omega(b)}^{k}\left\lfloor\frac{i-\omega(b)}{b^{\ell-1}\varphi(b)}\right\rfloor\bigg).
$$
Interchanging the order of summation on the right side yields \eqref{eqn:ineq-sym-key}.
\end{proof}

%
%
\subsection{Proof of Theorem \ref{thm:B-prime}}\label{subsec:83}

\begin{proof}[ Proof of Theorem \ref{thm:B-prime}]
For each $k \ge 0$ set
\begin{equation}
\ddnu_k(\mathbb{P},b)^{\ast} := \max\left\{0,\left\lfloor\frac{k-\omega(b)}{\varphi(b)}\right\rfloor+\left\lfloor\frac{k-\omega(b)}{b\varphi(b)}\right\rfloor+\left\lfloor\frac{k-\omega(b)}{b^2\varphi(b)}\right\rfloor+\cdots\right\}.
\end{equation}
We are to prove that $\ddnu_k(\mathbb{P},b)=\ddnu_k(\mathbb{P},b)^{\ast}$ holds for all $k \ge 0$.

We establish the result in two steps. The first step is a  construction, for each fixed $e \ge 1$, 
of an initial $\PP$-test sequence $\mathbf{a}_e= (a_k)_{k=0}^{N_e}$    for all fixed 
$0 \le  k \le N_e=  \omega(b) -1 + b^{e-1} \varphi(b^e)$,
and a proof that its $\PP$-test sequence exponent  values are
$$
\ddnu_k(\mathbb{P},b, \mathbf{a}_e) = \ddnu_k(\mathbb{P}, b)^{\ast} 
$$
for $0 \le k \le N_e$.

The second step is to 
show this initial $\PP$-test sequence $\mathbf{a}$ is necessarily an initial part of a $b$-ordering for $\PP$, for this range.
 Proposition \ref{prop:84} is used in this step. 
 Consequently the well-definedness Theorem \ref{thm:well-definedness} applies 
to conclude that
 $$
 \ddnu_k(\mathbb{P}, b) = \ddnu_k(\mathbb{P},b, \mathbf{a}_e) \quad \mbox{for} \quad 0 \le k \le N_e.
 $$
 
 Combining the last two equalities establishes that
 $$
 \ddnu_k(\mathbb{P}, b)= \ddnu_k(\mathbb{P}, b)^{\ast}
 $$
 holds for $0 \le k \le N_{e}$. Since $N_e \to \infty$ as  $e \to \infty$, this completes the proof.

{\bf Step 1}. The construction chooses for $0 \le k < \omega(b)$ the values $a_k=p_k$ to be the $\omega(b)$ distinct primes
dividing $b= \prod_{p_i\mid b} p_i^{f_i}$, ordered in increasing order.
 Then it chooses each $a_{\omega_b -1 + m}$ for $1 \le m \le \varphi(b^e) $
to be a prime in the $m$-th residue class $r_{m} \, (\bmod \, b^e)$ which is relatively prime to $b^e$, with
$0< r_{m} < b^e$, 
enumerated  in increasing order. We may write these 
$1= r_1 < r_2 < r_{\varphi (b^e)} = b^e-1$.
Dirichlet's theorem on primes in arithmetic progressions asserts there are infinitely many primes in
each arithmetic progression $r \, (\bmod \, b^e)$ with $(r, b^e)=1$ (a condition equivalent to $(r, b)=1$) , so we can pick such primes;
 the congruence conditions
$(\bmod \, b^e)$ then make all these primes distinct.  
(In this construction  for different $e$   the sequences $\mathbf{a}_e$ 
may have no values $a_j$   in common  except for  the initial $\omega(b)$  elements.)

With this construction for fixed $e \ge 1$,  the condition, for fixed $\ell$ with $1 \le \ell \le e$, that
$$
a_k- a_j \equiv 0 \, (\bmod\, b^{\ell}), \quad \mbox{with} \quad k>j, 
$$
 can  hold if and only if 
\begin{equation}
k- j \equiv 0 \, (\bmod \, \varphi(b^{\ell})) \quad \mbox{and} \quad j \ge \omega(b).
\end{equation}
Note that $\ordr_b(a_k - a_j)=0$ for $0 \le j < \omega(b)$.
We conclude that, for all $k$ with $0 \le k \le \omega(b) + \varphi(b^e)-1$, there holds 
\begin{eqnarray*}
\ddnu_k(\mathbb{P}, b, \mathbf{a}_e) 
&:=& \sum_{j=0}^{k-1} \ordr_b( a_k - a_j) 
= \begin{cases}
0 & \quad 0 \le j < \omega(b)\\
\sum_{\ell=1}^e \left\lfloor \frac{k- \omega(b)}{ \varphi(b^{\ell})} \right\rfloor & \quad \omega(b) \le j <k.
\end{cases}
\end{eqnarray*}
Since $\varphi(b^\ell) = b^{\ell-1} \varphi(b)$, 
and the right side is $\ddnu_k(\mathbb{P}, b)^{\ast}$, 
we conclude 
\begin{equation}\label{eqn:test-seqence-bound}
\ddnu_k(\mathbb{P}, b, \mathbf{a}_e) = \ddnu_k(\mathbb{P}, b)^{\ast}
\end{equation}
holds for $1\le k \le N_e$.

{\bf Step 2.} We currently know only that $\mathbf{a}_e$ is a $\PP$-test sequence.
We show that $\mathbf{a}_e= (a_j)_{j=0}^{N_e}$ is the initial part of a  $b$-ordering for $\PP$. This is proved by induction on $k \ge 0$,
for $0 \le k\le N_{e}.$
The induction hypothesis assumes the $b$-ordering property holds for all $0 \le j \le k-1$, and we are to prove it holds
for $k$. 
For the base case $0 \le k \le \omega(b)-1$ it holds automatically since all $\ddnu_{k}( \mathbb{P}, b, \mathbf{a}_e)=0$
in those cases.
Now suppose the equality  is true up to  $k-1$, now with $k \ge \omega(b)$.  We use  Proposition \ref{prop:84}, which may
be rewritten as stating for any $\PP$-test sequence $\mathbf{a}^{'}$, and any $k \ge 1$ (with $k \le |\mathbf{a}^{'}|$), 
\begin{equation}\label{eqn:ind-hyp}
\sum_{j=1}^k \ddnu_{j}(\mathbb{P}, b, \mathbf{a}^{'}) \ge \sum_{i= \omega(b) }^k \ddnu_{j}(\mathbb{P}, b)^{\ast}.
\end{equation}
Now consider any $\PP$-test sequence $(a_j^{'})$ that has $a_j= a_j^{'}$ for $0 \le j \le k-1$.
The induction hypothesis gives 
\begin{equation*}
\ddnu_{j}(\mathbb{P}, b, \mathbf{a'})=  \ddnu_j(\mathbb{P}, b, \mathbf{a})=\ddnu_{j}(\mathbb{P}, b)^{\ast}
\end{equation*}
for $0 \le j \le k-1$. Inserting these equalities on the left side of \eqref{eqn:ind-hyp} and canceling equal terms yields,
since $k \ge \omega(b)$,
$$
 \ddnu_{k}(\mathbb{P}, b, \mathbf{a'})\ge  \ddnu_{k}(\mathbb{P}, b)^{\ast}.
$$
But we have by Step 1, 
$$
 \ddnu_{k}(\mathbb{P}, b, \mathbf{a}_e) 
 = \ddnu_{k}(\mathbb{P}, b)^{\ast},
$$
so the choice $a_k^{'}= a_k$ achieves the minimum, showing that  $\mathbf{a}_e$ is a $b$-ordering
up to the $k$-th term, completing the induction step. The induction halts at  $k=N_e$.
\end{proof}

%
%

\section{Concluding remarks}\label{sec:9}

%
%
\subsection{$b$-orderings and generalized  factorials for Dedekind domains}\label{subsec:91}

 Bhargava's theory \cite{Bhar:97a}  constructed generalizations of $\pp$-orderings and $\pp$-sequences
 for arbitrary subsets $\SSS$ of a Dedekind domains $\DD$ (and for Dedekind rings, which are quotients
 of Dedekind domains by a nonzero ideal).   The general $\pp$-sequence  invariants  are ideals of $\DD$, 
   and the parameter $\sT$ should be viewed as running over nonzero (prime) ideals $\pp$ of $\DD$.

  Bhargava used these invariants to define generalized factorials and generalized binomial coefficients as
  fractional  ideals of the ring $\DD$, and  proved  them to be integral ideals.  
Over a number of papers Bhargava showed that  the notion of  $\pp$-orderings and $\pp$-sequences
 have many applications to many problems in commutative algebra, including
 finding bases of rings of integer-valued polynomials on $\SSS$, on which there is a large literature,
 see \cite{CC:97}, \cite{CC:06}, \cite{BharCY:09}.  He also applied them  to give good bases for suitable function spaces.
  In \cite{Bhar:97a}  he used them to construct generalized factorials on Dedekind rings, 
and showed in \cite{Bhar:98} that they 
 determined fixed divisors of primitive polynomials $F(x)$ as the ideal generated by all $F(a)$, 
 evaluated at values $a \in \SSS$.

In 2009 Bhargava \cite{Bhar:09} further  generalized his 
notion of $\pp$-sequence for a set $\SSS$ in  a Dedekind domain $\DD$, introducing 
 refined invariants with extra parameters, termed order-$h$ $\pp$-sequences, and $r$-removed $\pp$-sequences,
attached to sets $\SSS$ of  Dedekind domains, where $h \in \NN \cup \{+\infty\}$ and $r \in \NN$. 
The (additive version) $\pp$-orderings  described  here in Definition \ref{def:b-orderings} correspond to  the choice $r=0$ and $h= +\infty$.
We  will show  elsewhere (\cite{LY:23b})  that the  extension to $b$-orderings given here for $\ZZ$  
can be further extended to arbitrary ideals $\bb$ of  a Dedekind domain $\DD$, 
allowing both  refined  parameters $h$ and $r$. 

 In this paper we treated  generalized factorials for $\DD=\ZZ$
 as  nonnegative integers,  not as ideals, following Bhargava \cite{Bhar:00}. 
The simplification is possible  because the   set  of ideals $\sI(\ZZ)$ of $\ZZ$ is isomorphic to $\NN$,
  viewed as  monoids, with operations  ideal multiplication and  ring multiplication, respectively.
The integer version is compatible with the original definitions of binomial coefficients and factorials. See Edwards \cite{Edw:87}  
 for  history of binomial coefficients.

%
%
\subsection{Partial factorization of products of generalized binomial  coefficients}\label{subsec:92} 

The sequences $\Gb_n$ and $\Gbb_n$ defined in Section \ref{subsec:74}  have some interest
in connection with prime number theory. 
The paper \cite{DL:22}  studied 
 the partial factorizations of $\Gb_n$, defined by
$$
\Gb (n,x) := \prod_{p \le x} p^{\nub(n,p)}.
$$
It determined the asymptotics of $\log \Gb(n, t(n))$ as $n \to \infty$,
where $t(n)= \alpha n$ for fixed $0 < \alpha  \le 1$.
It observed  the existence of a limit scaling function associated to 
the main term of the asymptotics, 
$f(\alpha) = \lim_{n \to \infty} \frac{1}{n^2}\log \Gb(n, \alpha n)$. 
These factorizations relate to prime number theory, in that the remainder term in
the asymptotics of the logarithms of  these partial factorizations $\Gb_n$ 
obtained in  \cite{DL:22} can be  related to the zeta zeros.
This remainder term was shown have a  power savings over the main term, assuming
the Riemann hypothesis. 

 In  \cite{DLY:22+}
Lara Du and   the authors studied partial factorizations  
 defined by the  factorization of $\Gbb(n)$, 
  \begin{equation}\label{eqn:generalized-b-factorization-x}
\Gbb(n, x)  = \prod_{b \le x} b^{\nub(n, b)},
\end{equation}
 and determined the asymptotics
of $\log \Gbb(n, t(n))$ as $n \to \infty$,
where $t(n)= \alpha n$ for fixed $0 < \alpha  \le 1$.
Here there are two limit scaling functions (corresponding to main terms $n^2\log n$ and $n^2$, respectively), and the remainder term is
shown  unconditionally to have  a power savings $O( n^{3/2} )$.

The extension of this paper suggests the  study  of functions of generalized factorials
for $\ZZ$,  in which the $\sT$-parameter is allowed to vary with $n$,  yielding 
 ``partial binomial products" associated to sets $\SSS \subset \ZZ$ and sets of ideals $\sT$.
 This involves   introducing  a (monotone increasing) real function $t =t(n)$ which  allows $\sT$ to vary with $n$,
by  defining 
\begin{equation}
n!_{\SSS, \sT, t} := \prod_{ b \in \sT, b \le t(n)} b^{\ivr_n(\SSS, (b))}. 
\end{equation}

\paragraph{\bf Acknowledgements} 
We thank the reviewer for helpful comments. 
This paper is a  revised  and augmented  version of Chapter 3 of the second  author's
doctoral thesis (\cite{Yangjit:22}),  which was supervised by the first  author.
The first author was partially supported by NSF-grant DMS-1701576. The second author
was partially supported by NSF-grant DMS-1701577.

%
%
\appendix

%
%
\section{Tables of values for
$(S,\T)=(\mathbb{Z},\NN)$}
\label{appendix:tables}


%
%


\subsection{Generalized positive integers for $(S,\T)=(\mathbb{Z},\NN)$}\label{subsection:tables-integers}

The following Table~\ref{tab1} shows the first few values of $[n]_{\mathbb{Z},\NN}$ in decimal form and in factored form.

\bigskip

\begin{table}[H]
{\footnotesize
\begin{center}
\begin{tabular}{|c|rcl|}
\hline
$n$&&$[n]_{\mathbb{Z},\NN}$&\\
\hline\hline
$1$&$1$&$=$&$1$\\
\hline
$2$&$2$&$=$&$2$\\
\hline
$3$&$3$&$=$&$3$\\
\hline
$4$&$16$&$=$&$2^4$\\
\hline
$5$&$5$&$=$&$5$\\
\hline
$6$&$36$&$=$&$2^2\times3^2$\\
\hline
$7$&$7$&$=$&$7$\\
\hline
$8$&$256$&$=$&$2^8$\\
\hline
$9$&$81$&$=$&$3^4$\\
\hline
$10$&$100$&$=$&$2^2\times5^2$\\
\hline
$11$&$11$&$=$&$11$\\
\hline
$12$&$3,456$&$=$&$2^7\times3^3$\\
\hline
$13$&$13$&$=$&$13$\\
\hline
$14$&$196$&$=$&$2^2\times7^2$\\
\hline
$15$&$225$&$=$&$3^2\times5^2$\\
\hline
$16$&$32,768$&$=$&$2^{15}$\\
\hline
$17$&$17$&$=$&$17$\\
\hline
$18$&$17,496$&$=$&$2^3\times3^7$\\
\hline
$19$&$19$&$=$&$19$\\
\hline
$20$&$16,000$&$=$&$2^7\times5^3$\\
\hline
\end{tabular}
\begin{tabular}{|c|rcl|}
\hline
$n$&&$[n]_{\mathbb{Z},\NN}$&\\
\hline\hline
$21$&$441$&$=$&$3^2\times7^2$\\
\hline
$22$&$484$&$=$&$2^2\times11^2$\\
\hline
$23$&$23$&$=$&$23$\\
\hline
$24$&$1,327,104$&$=$&$2^{14}\times3^4$\\
\hline
$25$&$625$&$=$&$5^4$\\
\hline
$26$&$676$&$=$&$2^2\times13^2$\\
\hline
$27$&$6,561$&$=$&$3^8$\\
\hline
$28$&$43,904$&$=$&$2^7\times7^3$\\
\hline
$29$&$29$&$=$&$29$\\
\hline
$30$&$810,000$&$=$&$2^4\times3^4\times5^4$\\
\hline
$31$&$31$&$=$&$31$\\
\hline
$32$&$2,097,152$&$=$&$2^{21}$\\
\hline
$33$&$1,089$&$=$&$3^2\times11^2$\\
\hline
$34$&$1,156$&$=$&$2^2\times17^2$\\
\hline
$35$&$1,225$&$=$&$5^2\times7^2$\\
\hline
$36$&$362,797,056$&$=$&$2^{11}\times3^{11}$\\
\hline
$37$&$37$&$=$&$37$\\
\hline
$38$&$1,444$&$=$&$2^2\times19^2$\\
\hline
$39$&$1,521$&$=$&$3^2\times13^2$\\
\hline
$40$&$10,240,000$&$=$&$2^{14}\times5^4$\\
\hline
\end{tabular}
\begin{tabular}{|c|l|}
\hline
$n$&$[n]_{\mathbb{Z},\NN}$\\
\hline\hline
$41$&$41$\\
\hline
$42$&$2^4\times3^4\times7^4$\\
\hline
$43$&$43$\\
\hline
$44$&$2^7\times11^3$\\
\hline
$45$&$3^7\times5^3$\\
\hline
$46$&$2^2\times23^2$\\
\hline
$47$&$47$\\
\hline
$48$&$2^{25}\times3^5$\\
\hline
$49$&$7^4$\\
\hline
$50$&$2^3\times5^7$\\
\hline
$51$&$3^2\times17^2$\\
\hline
$52$&$2^7\times13^3$\\
\hline
$53$&$53$\\
\hline
$54$&$2^4\times3^{14}$\\
\hline
$55$&$5^2\times11^2$\\
\hline
$56$&$2^{14}\times7^4$\\
\hline
$57$&$3^2\times19^2$\\
\hline
$58$&$2^2\times29^2$\\
\hline
$59$&$59$\\
\hline
$60$&$2^{13}\times3^6\times5^6$\\
\hline
\end{tabular}
\end{center}
}
\caption{$[n]_{\mathbb{Z},\NN}$ in decimal for $1\le n\le40$ and in  factored form for $1\le n\le60$}
\label{tab1}
\end{table}
\bigskip
%
%

\subsection{Generalized factorials for $(S, \sT)= (\ZZ, \NN)$}\label{subsection:tables-factorials}

The following Table~\ref{tab2} shows the first twenty values of $k!_{\mathbb{Z},\NN}$ in decimal form and in factored form.
\bigskip
\begin{table}[H]
{\footnotesize
\begin{center}
\begin{tabular}{|c|rcl|}
\hline
$k$&&$k!_{\mathbb{Z},\NN}$&\\
\hline\hline
$0$&$1$&$=$&$1$\\
\hline
$1$&$1$&$=$&$1$\\
\hline
$2$&$2$&$=$&$2$\\
\hline
$3$&$6$&$=$&$2\times3$\\
\hline
$4$&$96$&$=$&$2^5\times3$\\
\hline
$5$&$480$&$=$&$2^5\times3\times5$\\
\hline
$6$&$17,280$&$=$&$2^7\times3^3\times5$\\
\hline
$7$&$120,960$&$=$&$2^7\times3^3\times5\times7$\\
\hline
$8$&$30,965,760$&$=$&$2^{15}\times3^3\times5\times7$\\
\hline
$9$&$2,508,226,560$&$=$&$2^{15}\times3^7\times5\times7$\\
\hline
$10$&$250,822,656,000$&$=$&$2^{17}\times3^7\times5^3\times7$\\
\hline
$11$&$2,759,049,216,000$&$=$&$2^{17}\times3^7\times5^3\times7\times11$\\
\hline
$12$&$9,535,274,090,496,000$&$=$&$2^{24}\times3^{10}\times5^3\times7\times11$\\
\hline
$13$&$123,958,563,176,448,000$&$=$&$2^{24}\times3^{10}\times5^3\times7\times11\times13$\\
\hline
$14$&$24,295,878,382,583,808,000$&$=$&$2^{26}\times3^{10}\times5^3\times7^3\times11\times13$\\
\hline
$15$&$5,466,572,636,081,356,800,000$&$=$&$2^{26}\times3^{12}\times5^5\times7^3\times11\times13$\\
\hline
$16$&$179,128,652,139,113,899,622,400,000$&$=$&$2^{41}\times3^{12}\times5^5\times7^3\times11\times13$\\
\hline
$17$&$3,045,187,086,364,936,293,580,800,000$&$=$&$2^{41}\times3^{12}\times5^5\times7^3\times11\times13\times17$\\
\hline
$18$&$53,278,593,263,040,925,392,489,676,800,000$&$=$&$2^{44}\times3^{19}\times5^5\times7^3\times11\times13\times17$\\
\hline
$19$&$1,012,293,271,997,777,582,457,303,859,200,000$&$=$&$2^{44}\times3^{19}\times5^5\times7^3\times11\times13\times17\times19$\\
\hline
\end{tabular}
\end{center}
}
\caption{$k!_{\mathbb{Z},\NN}$ in decimal and in factored form  for $0\le k\le19$}
\label{tab2}
\end{table}
\bigskip

%
%

\subsection{Generalized binomial coefficients for $(S,\T)=(\mathbb{Z},\NN)$}\label{subsection:tables-binomial}


The following Tables~\ref{tab3}~and~\ref{tab4} show the first few values of $\binom{k}{\ell}_{\mathbb{Z},\NN}$ in decimal form and in factored form.

\bigskip

\begin{table}[H]
{\footnotesize
\begin{center}
\begin{tabular}{|c|rrrrrrrrrrr|}
\hline
$k\backslash\ell$&$0$&$1$&$2$&$3$&$4$&$5$&$6$&$7$&$8$&$9$&$10$\\
\hline\hline
$0$&$1$&&&&&&&&&&\\
\hline
$1$&$1$&$1$&&&&&&&&&\\
\hline
$2$&$1$&$2$&$1$&&&&&&&&\\
\hline
$3$&$1$&$3$&$3$&$1$&&&&&&&\\
\hline
$4$&$1$&$16$&$24$&$16$&$1$&&&&&&\\
\hline
$5$&$1$&$5$&$40$&$40$&$5$&$1$&&&&&\\
\hline
$6$&$1$&$36$&$90$&$480$&$90$&$36$&$1$&&&&\\
\hline
$7$&$1$&$7$&$126$&$210$&$210$&$126$&$7$&$1$&&&\\
\hline
$8$&$1$&$256$&$896$&$10,752$&$3,360$&$10,752$&$896$&$256$&$1$&&\\
\hline
$9$&$1$&$81$&$10,368$&$24,192$&$54,432$&$54,432$&$24,192$&$10,368$&$81$&$1$&\\
\hline
$10$&$1$&$100$&$4,050$&$345,600$&$151,200$&$1,088,640$&$151,200$&$345,600$&$4,050$&$100$&$1$\\
\hline
\end{tabular}
\end{center}
}
\caption{$\binom{k}{\ell}_{\mathbb{Z},\NN}$ in decimal for $0\le\ell\le k\le10$}
\label{tab3}
\end{table}

\bigskip

\begin{table}[H]
{\footnotesize
\begin{center}
\begin{tabular}{|c|llllllll|}
\hline
$k\backslash\ell$&$0$&$1$&$2$&$3$&$4$&$5$&$6$&$7$\\
\hline\hline
$0$&$1$&&&&&&&\\
\hline
$1$&$1$&$1$&&&&&&\\
\hline
$2$&$1$&$2$&$1$&&&&&\\
\hline
$3$&$1$&$3$&$3$&$1$&&&&\\
\hline
$4$&$1$&$2^4$&$2^3\cdot3$&$2^4$&$1$&&&\\
\hline
$5$&$1$&$5$&$2^3\cdot5$&$2^3\cdot5$&$5$&$1$&&\\
\hline
$6$&$1$&$2^2\cdot3^2$&$2\cdot3^2\cdot5$&$2^5\cdot3\cdot5$&$2\cdot3^2\cdot5$&$2^2\cdot3^2$&$1$&\\
\hline
$7$&$1$&$7$&$2\cdot3^2\cdot7$&$2\cdot3\cdot5\cdot7$&$2\cdot3\cdot5\cdot7$&$2\cdot3^2\cdot7$&$7$&$1$\\
\hline
$8$&$1$&$2^8$&$2^7\cdot7$&$2^9\cdot3\cdot7$&$2^5\cdot3\cdot5\cdot7$&$2^9\cdot3\cdot7$&$2^7\cdot7$&$2^8$\\
\hline
$9$&$1$&$3^4$&$2^7\cdot3^4$&$2^7\cdot3^3\cdot7$&$2^5\cdot3^5\cdot7$&$2^5\cdot3^5\cdot7$&$2^7\cdot3^3\cdot7$&$2^7\cdot3^4$\\
\hline
$10$&$1$&$2^2\cdot5^2$&$2\cdot3^4\cdot5^2$&$2^9\cdot3^3\cdot5^2$&$2^5\cdot3^3\cdot5^2\cdot7$&$2^7\cdot3^5\cdot5\cdot7$&$2^5\cdot3^3\cdot5^2\cdot7$&$2^9\cdot3^3\cdot5^2$\\
\hline
\end{tabular}
\end{center}
}
\caption{$\binom{k}{\ell}_{\mathbb{Z},\NN}$ in factored form for $0\le\ell\le k\le10$, with  $\ell\le7$}
\label{tab4}
\end{table}

\bigskip
%
%

\bigskip
\end{document}